\title{\bf Quantitative stability estimates for Fokker--Planck equations}
\author{Huaiqian Li\footnote{Email: hqlee@scu.edu.cn. School of Mathematics, Sichuan University, Chengdu 610064, P. R. China.}  \quad Dejun Luo\footnote{Email: luodj@amss.ac.cn. Key Laboratory of Random Complex Structures and Data Sciences, Academy of Mathematics and Systems Science, Chinese Academy of Sciences, Beijing 100190, China and School of Mathematical Sciences, University of the Chinese Academy of Sciences, Beijing 100049, China.} } \vspace{2mm}
\date{}
\def\R{\mathbb{R}}
\def\E{\mathbb{E}}
\def\P{\mathbb{P}}
\def\D{\mathcal{D}}
\def\L{\mathcal{L}}
\def\W{\mathbb{W}}
\def\d{\textup{d}}
\def\eps{\varepsilon}
\def\det{\textup{det}}
\def\supp{\textup{supp}}
\def\<{\langle}
\def\>{\rangle}
\def\F{\mathcal{F}}
\def\div{\textup{div}}
\def\newdot{{\kern.8pt\cdot\kern.8pt}}
\newtheorem{theorem}{Theorem}[section]
\newtheorem{lemma}[theorem]{Lemma}
\newtheorem{corollary}[theorem]{Corollary}
\newtheorem{proposition}[theorem]{Proposition}
\newtheorem{definition}[theorem]{Definition}
\theoremstyle{definition}\newtheorem{remark}[theorem]{Remark}
\begin{document}

\maketitle
\makeatletter 
\renewcommand\theequation{\thesection.\arabic{equation}}
\@addtoreset{equation}{section}
\makeatother 

\vspace{-6mm}

\begin{abstract}
We consider the Fokker--Planck equations  with irregular coefficients. Two different cases are treated: in the degenerate case, the coefficients are assumed to be weakly differentiable, while in the non-degenerate case the drift satisfies only the Ladyzhenskaya--Prodi--Serrin condition. Using Trevisan's superposition principle which represents the solution as the marginal of the solution to the martingale problem of the diffusion operator, we establish quantitative stability estimates for the solutions of Fokker--Planck equations.
\end{abstract}

{\bf MSC 2010:} primary 35Q84; secondary 60H10

{\bf Keywords:} Fokker--Planck equation, stochastic differential equation, stability estimate, Kantorovich--Rubinstein distance, superposition principle

\section{Introduction}

Fix $T>0$ and let $\mathcal{P}(\R^d)$ be the class of  probability measures on the Euclidean space $\R^d$. Let $a:[0,T]\times\R^d\to \mathcal M_{d,d}$ and $b:[0,T]\times\R^d \to\R^d$ be measurable functions, where $\mathcal M_{n,m}$ is the space of $n\times m$ matrices. We consider the possibly degenerate Fokker--Planck equation in $[0,T]\times\R^d$:
  \begin{equation}\label{FPE}
  \partial_t\mu_t-\frac12\sum_{ij}\partial_{ij} (\mu_t a_{ij} )+\div(\mu_t b)=0,\quad \mu_0=\nu,
  \end{equation}
where $\nu\in \mathcal{P}(\R^d)$. A Borel curve $\mu=(\mu_t)_{t\in [0,T]}\subset \mathcal{P}(\R^d)$ is called a weak solution of \eqref{FPE} if
  \begin{equation}\label{integrability}
  \int_0^T\!\!\int_{\R^d} \big(\|a_t\|+|b_t|\big)\,\d\mu_t\d t<\infty
  \end{equation}
and for any $f\in C_c^{1,2}([0,T)\times\R^d)$, one has
  \begin{equation}\label{FPE-1}
  \int_{\R^d} f(0,x)\,\d\nu(x)+ \int_0^T\!\! \int_{\R^d} \big(\partial_t f(t,x)+\L f(t,x)\big)\,\d\mu_t(x)\d t=0,
  \end{equation}
where $\|\cdot\|$ is the Hilbert--Schmidt norm of matrices and $\L$ is the time-dependent second order differential operator associated to \eqref{FPE}. As remarked in \cite[Remark 2.3]{Trevisan}, any solution $(\mu_t)_{t\in [0,T]}$ to \eqref{FPE} admits a unique narrowly continuous representative $(\tilde\mu_t)_{t\in [0,T]}$; hence it is reasonable to say that the solution starts from $\nu$. Since the coefficients are not continuous, the integral in \eqref{FPE-1} would be sensitive to the choice of equivalent versions when $\mu_t$ is singular to the Lebesgue measure. Therefore, we shall assume that, for a.e. $t\in (0,T)$, $\mu_t$ is absolutely continuous with respect to the Lebesgue measure $\d x$ and do not distinguish $\mu_t$ from its density $u_t\in L^1(\R^d, \R_+)$.

In this paper, we assume that there exists a matrix-valued function $\sigma:[0,T]\times\R^d\to \mathcal M_{d,m}$ such that $a=\sigma\sigma^\ast$. When $\sigma$ and $b$ are sufficiently smooth, for example, $\sigma, b\in C^{0,1}_b([0,T]\times\R^d)$, it is well known that the solution $\mu_t$ of the Fokker--Planck equation \eqref{FPE} is the distribution of the solution $X_t$ to the stochastic differential equation
  \begin{equation}\label{Ito-SDE}
  \d X_t=\sigma_t(X_t)\,\d B_t+ b_t(X_t)\,\d t,\quad \mbox{law}(X_0)=\nu,
  \end{equation}
where $(B_t)_{t\geq 0}$ is an $m$-dimensional standard Brownian motion.

If the diffusion coefficient $a$ is identically zero, then the Fokker--Planck equation \eqref{FPE} reduces to the continuity equation
  \begin{equation}\label{CE}
  \partial_t\mu_t+\div(\mu_t b)=0,\quad \mu_0=\nu.
  \end{equation}
According to the celebrated DiPerna--Lions theory, the well-posedness of \eqref{CE} implies the existence and uniqueness of measurable flows $(X_t)_{0\leq t\leq T}$ generated by the ordinary differential equation
  \begin{equation}\label{ODE}
  \d X_t=b_t(X_t)\, \d t,
  \end{equation}
even though the driving vector field $b$ only enjoys some mild regularity properties. In the past three decades, there have been lots of studies on \eqref{CE} (or the transport equation) under various types of assumptions on the regularity of $b$, among which we mention the ground breaking works \cite{DiPernaLions, Ambrosio04}, where the Sobolev and the BV spatial regularities on $b$ were considered, respectively. There are also stability results on the equation \eqref{CE}: if the sequence of vector fields $b^n$ converge to some $b$ in a certain sense, then the corresponding solutions $\mu^n$ tend to $\mu$ too (see for instance \cite[Theorem II.4]{DiPernaLions}). However, there is no explicit quantitative estimate on the convergence rate. These results have been extended to the Fokker--Planck type equations \eqref{FPE}; see \cite{LebrisLions04, LebrisLions08, Figalli, RocknerZhang10, Luo13, Luo14} for the finite dimensional case and \cite{Luo10} for  the results in the abstract Wiener space. These papers deal mainly with the existence and uniqueness of solutions to \eqref{FPE}. Though the stability of solutions has been treated in \cite[Theorem 3.1]{LebrisLions04} (see also \cite[Theorem 1.5]{Ambrosio09} in the setting of a separable Hilbert space endowed with a log-concave measure), there is no explicit estimate on the rate of convergence. We mention that some sufficient conditions were provided in \cite{BDRS} which guarantee the uniqueness of \eqref{CE} in the class of measures, see Theorem 1.1 therein for precise statements. The readers can find in \cite[Section 2]{BRS} a review of the existing methods for proving uniqueness of \eqref{FPE}, and some uniqueness results in the subsequent sections. We also refer to the monograph \cite{Bogachev15} for a comprehensive study of the Fokker--Planck equation \eqref{FPE}.

In the smooth situation, however, some simple calculations lead to the quantitative estimates on the solutions to continuity equations \eqref{CE} (see also \cite[Remark 1.6]{Bogachev16}). Suppose we are given two vector fields $b^1$ and $b^2$, both satisfying the Lipschitz condition with the same constant $L>0$. For simplification of notations, we assume they are time-independent. Let $\mu^i_t$ be the solution to \eqref{CE} with $b=b^i$ and the same initial value $\mu^i_0=\nu\in\mathcal{P}(\R^d)$, $i=1,2$.
Let $p\geq1$; we are concerned with the estimate on the classical $p$-Kantorovich--Wasserstein distance $W_p(\mu_t^1, \mu_t^2)$ between $\mu^1_t$ and $\mu^2_t$. Recall that for $\mu, \nu \in\mathcal{P}(\R^d)$ with finite moment of order $p$,
  $$W_p(\mu,\nu)=\bigg[ \inf_{\pi\in \mathcal C(\mu,\nu)} \int_{\R^d \times\R^d} |x-y|^p\, \d\pi(x,y)\bigg]^{1/p},$$
where $\mathcal C(\mu,\nu)$ is the collection of probability measures on $\R^d\times \R^d$ which have $\mu$ and $\nu$ as margins. To this end, we write $(X^i_t)_{0\leq t\leq T}$ for the flow generated by \eqref{ODE} with $b=b^i$ and $X^i_0=x,\, i=1,2$. For $p>1$, let $q=p/(p-1)$ be its conjugate number. Then
  \begin{eqnarray*}
  \frac{\d}{\d t}|X_t^1-X_t^2|^p&=&p|X_t^1-X_t^2|^{p-2}\langle X_t^1-X_t^2, b^1(X_t^1)-b^2(X_t^2)\rangle\\
  &\leq& p |X_t^1-X_t^2|^{p-1}|b^1(X_t^1)-b^1(X_t^2)| + p|X_t^1-X_t^2|^{p-1}|b^1(X_t^2)-b^2(X_t^2)|.
  \end{eqnarray*}
Young's inequality leads to
  \begin{eqnarray*}
  \frac{\d}{\d t}|X_t^1-X_t^2|^p &\leq& p L |X_t^1-X_t^2|^p + p\bigg(\frac{|b^1(X_t^2)-b^2(X_t^2)|^p}{p} +\frac{|X_t^1-X_t^2|^p}{q} \bigg)\\
  &\leq& (p L+p-1)|X_t^1-X_t^2|^p + |b^1(X_t^2)-b^2(X_t^2)|^p.
  \end{eqnarray*}
Thus, by Gronwall's lemma, we have
  \begin{eqnarray*}
  |X_t^1-X_t^2|^p\leq \int_0^t e^{(p L+p-1)(t-s)}|b^1(X_s^2)-b^2(X_s^2)|^p\,\d s.
  \end{eqnarray*}
By the definition of the Wasserstein distance, we conclude that
  \begin{eqnarray*}
  W_p(\mu_t^1,\mu_t^2)^p&\leq& \int_{\R^d}|X_t^1-X_t^2|^p\,\d\nu\\
  &\leq&\int_0^t e^{(p L+p-1)(t-s)} \bigg(\int_{\R^d}|b^1(X_s^2)-b^2(X_s^2)|^p\d\nu \bigg)\,\d s\\
  &\leq& e^{(p L+p-1)T} \int_0^t\int_{\R^d}|b^1(x)-b^2(x)|^p\,\d\mu_s^2(x)\d s,
  \end{eqnarray*}
where we used the fact that $(X_t^2)_\#\nu=\mu_t^2$ in the last inequality. From this estimate, we immediately get the uniqueness of solutions to the continuity equation when the vector field is Lipschitz continuous. Similar arguments give rise to the quantitative estimates for the solutions of the Fokker--Planck equation \eqref{FPE}.

Beyond the smooth case, for example, if one only has $b^i\in W^{1,p}(\R^d, \R^d)$, $i=1,2$, then the above arguments no longer work. Nevertheless, using the pointwise characterization of Sobolev functions (cf. \eqref{2-lem-maximal-funct.1} below), Crippa and de Lellis \cite{CrippaDeLellis}  obtained some estimates on the Lagrangian paths of the ODE \eqref{ODE}. For example, they estimated the following quantity
  $$\int_{\R^d} \log\bigg(\frac{|X^1_t(x)-X^2_t(x)|}{\delta} +1\bigg)\,\d x$$
in terms of $\delta$ and the $L^p$-norms of $\nabla b^1,\, b^1- b^2$. Such estimates enable them to give alternative proofs to many of the results in the DiPerna--Lions theory. Motivated by this paper and borrowing some ideas from the theory of optimal transportation, Seis \cite{Seis16a} recently established quantitative stability estimates for solutions of the continuity equation in terms of the Kantorovich--Rubinstein distance. To introduce his result, we need some notations. Fix $\delta>0$. The Kantorovich--Rubinstein distance (see \cite[Chapter 6]{Villani2009} for a discussion on the evolution of the name of this distance) with logrithmic cost function is defined as
  $$\D_\delta(\mu^1,\mu^2)=\inf_{\pi\in \mathcal C(\mu^1,\mu^2)} \int_{\R^d \times\R^d} \log\bigg(\frac{|x-y|}{\delta} +1\bigg) \d\pi(x,y).$$
Such a quantity is finite if
  \begin{equation}\label{prob-space}
  \mu^i\in \mathcal P_{\rm log}(\R^d)=\bigg\{\nu\in \mathcal P(\R^d): \int_{\R^d} \log(1+|x|)\,\d\nu(x)<\infty\bigg\},\quad  i=1,2.
  \end{equation}
As mentioned in \cite[Section 3.1]{Seis16b}, $\D_\delta$ metrizes the weak convergence of probability measures; see also \cite[Theorem 7.12]{Villani}. Seis proved the following estimate: for each $i=1,2$, if the vector field $b^i \in L^1((0,T), W^{1,p}(\R^d, \R^d))$ and $\mu^i_t(\d x)=u^i_t(x)\, \d x$ is a solution to \eqref{CE} such that $u^i\in L^\infty((0,T), L^1\cap L^q(\R^d))$, where $p>1$ and $1/p+1/q=1$, then
  \begin{equation}\label{Seis-estimate}
  \D_\delta(\mu^1_t,\mu^2_t)\leq C_1 + \frac{C_2}{\delta}\|b^1-b^2\|_{L^1(L^p)},
  \end{equation}
where $C_1$ and $C_2$ are constants depending on the norms of the vector fields $b^i$ and the solutions $u^i$. The proof in \cite{Seis16a} is based on the dual formulation of the Kantorovich--Rubinstein distance. We notice that Seis's approach works also for continuity equations with source (cf. \cite[(13)]{Seis16b}), which is applied in \cite[Section 3.3]{Seis16b} to derive results on the zero-diffusivity limit.

On the other hand, the superposition principle (see e.g. \cite[Theorem 3.2]{Ambrosio08}) asserts that, under quite general conditions, any solution to the continuity equation \eqref{CE} is the marginal distribution of a measure $\eta$ on $C(0,T;\R^d)$ supported on integral curves of the time-dependent vector fields $(b_t)_{t\in[0,T]}$. Such a result has also been generalized to the case of Fokker--Planck equations \eqref{FPE}; see \cite[Theorem 2.6]{Figalli} for the case of uniformly bounded coefficients and \cite[Theorem 2.5]{Trevisan} for the case when the coefficients satisfy  \eqref{integrability}. Based on the superposition principle in \cite{Figalli}, R\"ockner and Zhang \cite{RocknerZhang10} proposed a method for proving the uniqueness of solutions to the Fokker--Planck equation with bounded coefficients; see also \cite{Luo14} and the proof of \cite[Theorem 1.3]{Zhang16-2}. A close look at the proof reveals that this method can also yield quantitative stability estimates to the solutions when the coefficients are in the Sobolev space $W^{1,p}$ with $p\geq 1$. We emphasize that our method works for Fokker--Plank equations \eqref{FPE} with degenerate diffusion coefficients. In the non-degenerate case, Bogachev et al. \cite{Bogachev16} recently  established  quantitative estimates on the relative entropy and total variation distance of solutions to  \eqref{FPE} with a different approach, and Manita \cite{Manita} obtained upper bounds for Kantorovich functionals with bounded cost functions between solutions to \eqref{FPE}  with the same diffusion but different dissipative drifts. We would like to mention that our approach may be applied to establish quantitative stability estimates for non-local Fokker--Planck equations, by using the superposition principle recently proved in \cite{Fournier}.

This paper is organized as follows. We state our main results in Section 2. The results in the first subsection belong to the framework of the DiPerna--Lions theory, which deal with the degenerate Fokker--Plank equations \eqref{FPE} with weakly differentiable coefficients, while those in Subsection 2.2 are concerned with non-degenerate equations with an identity diffusion part and a singular drift satisfying an integrability condition. In Section 3, we make the necessary preparations for proving the main results. In particular, we recall the basic notions of solutions to stochastic differential equations and their relationship. We also state Trevisan's superposition principle which generalizes the previous result of Figalli \cite{Figalli} and serves as an important basis of the current work. The proofs of the main results and their applications stated in Subsections 2.1 and 2.2 are given in  Sections 4 and 5, respectively.

\section{Main results and applications}

This section consists of two subsections. In the first one, we state our quantitative stability estimates on Fokker--Planck equations with weakly differentiable coefficients, as well as coefficients satisfying the mixed Osgood and Sobolev condition. In the second part, we consider non-degenerate equations with singular drift satisfying  an integrability  condition.

Fix $\delta>0$. Since we are going to apply It\^o's formula, we shall use the function $s\mapsto \log(\frac{s^2}{\delta^2} +1)$ and consider
\begin{equation}\label{KR-distance}\tilde\D_\delta(\mu^1,\mu^2)=\inf_{\pi\in \mathcal C(\mu^1,\mu^2)} \int_{\R^d \times\R^d} \log\bigg(\frac{|x-y|^2}{\delta^2} +1\bigg) \d\pi(x,y).
\end{equation}
According to the elementary inequality
  \begin{equation}\label{2.1}
  \log(1+s^2)\leq \log(1+2s+s^2)= 2\log(1+s),\quad s\geq 0,
  \end{equation}
the quantity $\tilde\D_\delta(\mu^1,\mu^2)$ is finite if $\mu^i\in \mathcal P_{\rm log}(\R^d)\, (i=1,2)$, where $\mathcal P_{\rm log}(\R^d)$ is defined in \eqref{prob-space}. Note that $\log\big(\frac{|x-y|^2}{\delta^2} +1\big)$ is not a metric on $\R^d$. Since this function is nonnegative and continuous, it is well known that there exists an optimal $\pi_\delta \in \mathcal{C}(\mu^1,\mu^2)$ which achieves the infimum in the definition of $\tilde\D_\delta(\mu^1,\mu^2)$.

\begin{remark}\label{1-rem}
The two quantities $\D_\delta(\mu^1,\mu^2)$ and $\tilde\D_\delta(\mu^1,\mu^2)$ have the following  relations:
  \begin{equation}\label{1-rem.1}
  \tilde\D_\delta(\mu^1,\mu^2)\leq 2 \D_\delta(\mu^1,\mu^2)\quad \mbox{and}\quad \D_\delta(\mu^1,\mu^2) \leq \bigg(\frac{\tilde\D_\delta(\mu^1,\mu^2)}{\log 2}\bigg)^{1/2} +\tilde\D_\delta(\mu^1,\mu^2).
  \end{equation}
The first inequality follows immediately from \eqref{2.1}. As for the second one, we take $\pi_\delta \in \mathcal C(\mu^1,\mu^2)$ such that
  \begin{equation}\label{1-rem.2}
  \tilde\D_\delta(\mu^1,\mu^2)=\int_{\R^d\times \R^d} \log\bigg(\frac{|x-y|^2}{\delta^2}+1\bigg) \d\pi_\delta(x,y).
  \end{equation}
We have
  \begin{equation}\label{1-rem.3}
  \aligned \D_\delta(\mu^1,\mu^2)&\leq \int_{\R^d\times\R^d} \log\bigg(\frac{|x-y|}{\delta} +1\bigg) \d\pi_\delta(x,y)\\
  &= \bigg[\int_{\{|x-y|\leq \delta\}} + \int_{\{|x-y|> \delta\}}\bigg] \log\bigg(\frac{|x-y|}{\delta} +1\bigg) \d\pi_\delta(x,y)\\
  &=: J_1+J_2.
  \endaligned
  \end{equation}
By Cauchy's inequality and using the simple inequality $\log(1+s)\geq (\log 2) s$ for $s\in [0,1]$, we obtain
  $$\aligned J_1&\leq \int_{\{|x-y|\leq \delta\}} \frac{|x-y|}{\delta}\, \d\pi_\delta(x,y)\leq \bigg[\int_{\{|x-y|\leq \delta\}} \frac{|x-y|^2}{\delta^2}\, \d\pi_\delta(x,y)\bigg]^{1/2}\\
  & \leq \bigg[\frac 1{\log 2}\int_{\{|x-y|\leq \delta\}} \log\bigg(\frac{|x-y|^2}{\delta^2}+1\bigg) \d\pi_\delta(x,y)\bigg]^{1/2}\leq \bigg(\frac{\tilde\D_\delta(\mu^1,\mu^2)}{\log 2}\bigg)^{1/2},
  \endaligned$$
where the last inequality follows from \eqref{1-rem.2}. Next,
  $$J_2\leq \int_{\{|x-y|> \delta\}} \log\bigg(\frac{|x-y|^2}{\delta^2} +1\bigg) \d\pi_\delta(x,y)\leq \tilde\D_\delta(\mu^1,\mu^2).$$
Substituting these estimates into \eqref{1-rem.3} leads to \eqref{1-rem.1}.
\end{remark}

In the following, we write $\|\cdot\|_{L^r(L^s)}$ for the norm in the function space $L^r(0,T; L^s(\R^d)),\, r, s\in[1,\infty]$. Though the image space is not explicitly given, there will be no confusion according to the context.


\subsection{Degenerate equations with weakly differentiable coefficients}

In this part we work in the framework of the DiPerna--Lions theory and consider Fokker--Planck equations \eqref{FPE} with weakly differentiable coefficients.  The first main result is

\begin{theorem}\label{2-thm-1}
Let $p> 1$ and $q$ be its conjugate number. For each $i\in\{1,2\}$, assume that $\sigma^i\in L^2(0,T; W^{1,2p}(\R^d, \mathcal{M}_{d,m}))$ and $b^i\in L^1(0,T; W^{1,p}(\R^d,\R^d))$, and $u^i\in L^\infty(0,T; L^1\cap L^q(\R^d))$ is the solution to the corresponding Fokker--Planck equation \eqref{FPE} with $a=\sigma^i\,(\sigma^i)^\ast$ and $b=b^i$. Let $\mu^i_t= u^i_t\, \d x$ and assume that $\mu^i_0\in \mathcal P_{\rm log}(\R^d),\, i=1,2$. Then for all $t\in [0,T]$,
  \begin{equation}\label{2-thm-1-0}
  \begin{split}
  \tilde\D_\delta(\mu^1_t, \mu^2_t)&\leq \tilde\D_\delta(\mu^1_0, \mu^2_0) +2\|u^2\|_{L^\infty(L^q)} \bigg(\frac1\delta \|b^1-b^2\|_{L^1(L^p)} + \frac1{\delta^2} \|\sigma^1-\sigma^2\|_{L^2(L^{2p})}^2\bigg)\\
  &\hskip13pt + C_{d,p} \bigg(\sum_{i=1}^2\|u^i\|_{L^\infty(L^q)}\bigg) \Big(\big\| \nabla b^1\big\|_{L^1(L^p)} +\big\|\nabla \sigma^1\big\|_{L^2(L^{2p})}^2 \Big),
  \end{split}
  \end{equation}
where $C_{d,p}$ is a positive constant depending only on $d$ and $p$.
\end{theorem}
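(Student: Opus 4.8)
The strategy is to lift the Fokker--Planck equations to the level of stochastic processes via Trevisan's superposition principle, and then run an It\^o-calculus estimate on the logarithmic cost function. By the superposition principle, for each $i$ there is a probability measure $\eta^i$ on the path space $C([0,T];\R^d)$, concentrated on solutions of the martingale problem associated to $\L^i$ with $a=\sigma^i(\sigma^i)^\ast$ and $b=b^i$, such that $(e_t)_\#\eta^i=\mu^i_t$ for all $t$, where $e_t$ is evaluation at time $t$. Coupling these, one realizes on a common probability space two processes $X^1,X^2$ with $\mathrm{law}(X^i_t)=\mu^i_t$, driven (after a joint martingale-representation step) by a common Brownian motion, so that $\d X^i_t=\sigma^i_t(X^i_t)\,\d B_t+b^i_t(X^i_t)\,\d t$; the initial pair $(X^1_0,X^2_0)$ is chosen to realize the optimal coupling $\pi_\delta$ for $\tilde\D_\delta(\mu^1_0,\mu^2_0)$.

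First I would apply It\^o's formula to $\Phi_\delta(X^1_t-X^2_t)$ with $\Phi_\delta(z)=\log(|z|^2/\delta^2+1)$, whose gradient and Hessian are explicitly bounded: $|\nabla\Phi_\delta(z)|\le C/(\delta^2+|z|^2)^{1/2}\cdot\frac{|z|}{(\delta^2+|z|^2)^{1/2}}$ and, crucially, $|\Hess\Phi_\delta(z)|\lesssim 1/(\delta^2+|z|^2)$. Taking expectations, the martingale term vanishes and one is left with a drift term $\E\langle\nabla\Phi_\delta(X^1_t-X^2_t),\,b^1_t(X^1_t)-b^2_t(X^2_t)\rangle$ and a second-order term $\frac12\E\big[\Hess\Phi_\delta(X^1_t-X^2_t):(\sigma^1_t(X^1_t)-\sigma^2_t(X^2_t))(\sigma^1_t(X^1_t)-\sigma^2_t(X^2_t))^\ast\big]$. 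Next I would split each difference into a ``same-coefficient'' part (e.g. $b^1_t(X^1_t)-b^1_t(X^2_t)$) and a ``different-coefficient'' part ($b^1_t(X^2_t)-b^2_t(X^2_t)$). For the same-coefficient parts I invoke the pointwise Sobolev estimate \eqref{2-lem-maximal-funct.1}, $|f(x)-f(y)|\le C_d|x-y|\big(M|\nabla f|(x)+M|\nabla f|(y)\big)$, so that $|\nabla\Phi_\delta|\cdot|b^1_t(X^1_t)-b^1_t(X^2_t)|\lesssim M|\nabla b^1_t|(X^1_t)+M|\nabla b^1_t|(X^2_t)$ (the singular factor $|z|/(\delta^2+|z|^2)$ is absorbed), and similarly for the Hessian term, where $|z|^2/(\delta^2+|z|^2)\le1$ kills the singularity entirely. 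For the different-coefficient parts, one keeps the crude bounds $|\nabla\Phi_\delta(z)|\le C/\delta$ and $|\Hess\Phi_\delta(z)|\le C/\delta^2$, producing the terms $\frac1\delta|b^1_t-b^2_t|(X^2_t)$ and $\frac1{\delta^2}|\sigma^1_t-\sigma^2_t|^2(X^2_t)$.

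Then I would take expectations against the law $\mu^i_t=u^i_t\,\d x$ and integrate in time. Each spatial integral is handled by H\"older's inequality with exponents $q$ and $p$ (resp. $q$ and $2p$ for the $\sigma$-terms), using $u^i\in L^\infty(L^q)$: for instance $\int_0^t\!\int M|\nabla b^1_s|\,u^i_s\,\d x\,\d s\le\|u^i\|_{L^\infty(L^q)}\int_0^t\|M|\nabla b^1_s|\|_{L^p}\,\d s$, and the Hardy--Littlewood maximal inequality bounds $\|M|\nabla b^1_s|\|_{L^p}\lesssim_{d,p}\|\nabla b^1_s\|_{L^p}$ (here $p>1$ is used). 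Collecting, and noting there is no Gronwall-type exponential because $\Phi_\delta$ is bounded above only logarithmically --- in fact the $\Phi_\delta$ self-interaction never reappears since we never bound $|b^1_t(X^1_t)-b^1_t(X^2_t)|$ by a multiple of $|X^1_t-X^2_t|$ --- one arrives directly at \eqref{2-thm-1-0}, with $C_{d,p}$ coming from the maximal-function constant.

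The main obstacle I anticipate is technical rather than conceptual: justifying the It\^o computation and the vanishing of the stochastic integral when the coefficients are merely Sobolev, so that $a priori$ $X^i$ need not be a semimartingale in the classical sense and $\sigma^i_t(X^i_t)$ need not be square-integrable pointwise in $t$. This is precisely why the integrability hypothesis \eqref{integrability} (equivalently $u^i\in L^\infty(L^q)$ together with $\sigma^i\in L^2(L^{2p})$, $b^i\in L^1(L^p)$) is imposed: it guarantees $\E\int_0^T(\|\sigma^i_t(X^i_t)\|^2+|b^i_t(X^i_t)|)\,\d t<\infty$, so the martingale problem solution is a genuine It\^o process and the local martingale is a true martingale. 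A secondary subtlety is that $\Phi_\delta$ is smooth and bounded with bounded derivatives, so It\^o's formula applies without localization, but one must still check that the cross term between the $b$-difference and $\sigma$-difference does not appear --- it does not, since It\^o's formula separates first-order (drift) and second-order (diffusion) contributions cleanly. Finally, measurable selection of the optimal coupling $\pi_\delta$ and of the joint Brownian motion requires the standard enlargement-of-probability-space argument, which I would cite rather than reproduce.
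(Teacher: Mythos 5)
Your proposal is correct and follows essentially the same route as the paper: Trevisan's superposition principle plus a coupling on a common probability space with a common Brownian motion realizing the optimal plan $\pi_\delta$ at time $0$, It\^o's formula for $\log(|z|^2/\delta^2+1)$, splitting each difference into a same-coefficient and a different-coefficient part, and then H\"older together with the pointwise Sobolev bound \eqref{2-lem-maximal-funct.1} and the maximal inequality \eqref{2-lem-maximal-funct.2}. The only cosmetic difference is that the paper first mollifies $b^1$ and $\sigma^1$ before using \eqref{2-lem-maximal-funct.1} and passes to the limit $\eps\to0$, whereas you apply the Lebesgue-point inequality directly, which is admissible here because the marginals $\mu^i_s=u^i_s\,\d x$ are absolutely continuous.
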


Under our assumptions, it is clear that the solution $(\mu^i_t)_{0\leq t\leq T}$ satisfies \eqref{integrability}. Thus, by Lemma \ref{2-lem-growth} below, we have $\mu^i_t\in \mathcal P_{\rm log}(\R^d)$ for any $t\in [0,T]$ and $i=1,2$, which implies that the quantity $\tilde\D_\delta(\mu^1_t, \mu^2_t)$ is finite.

Here is a comment on the above stability result.  The estimate \eqref{2-thm-1-0} is of little use for a fixed value of $\delta>0$; instead, it should be understood in a dynamical way. More precisely, suppose that $b^2$ and $\sigma^2$ are replaced by two sequences $\{b^n\}_{n\geq2}$ and $\{\sigma^n\}_{n\geq2}$, which converge in $L^1(0,T; L^p(\R^d))$ to $b^1$ and in $L^2(0,T; L^{2p}(\R^d))$ to $\sigma^1$, respectively. Let $\mu^n_t= u^n_t(x)\,\d x$ be the solution of \eqref{FPE}  corresponding to the coefficients $b^n$ and $a^n=\sigma^n (\sigma^n)^\ast$ and $\mu^n_0=\mu^1_0$. Assume that the sequence $\{u^n\}_{n\geq 1}$ is bounded in $L^\infty(0,T; L^1\cap L^q(\R^d))$. If we take
  $$\delta=\delta_n= \|b^1-b^n\|_{L^1(L^p)}+ \|\sigma^1-\sigma^n\|_{L^2(L^{2p})},$$
then \eqref{2-thm-1-0} implies that $\tilde\D_{\delta_n}(\mu^1_t,\mu^n_t)\leq \tilde C<\infty$. From this we conclude that,  as $n\to \infty$, $\mu^n_t$ tends to $\mu^1_t$ at the speed of  $\delta_n$ (see Proposition \ref{prop-zero-diffusivity} for a related result).

\begin{remark}\label{2-rem}
(1) The existence of solutions to the Fokker--Planck equation \eqref{FPE} in the space $L^\infty(0,T; L^q(\R^d))$ follows from standard 
a priori   
estimates (see e.g. \cite[Section 5.2]{LebrisLions08}), provided that $(\div(b^\sigma))^- \in L^1(0,T; L^\infty(\R^d))$, where $b^\sigma=b-\frac12 \div(\sigma\sigma^\ast)$.

(2) The second term on the right hand side of \eqref{2-thm-1-0} can be replaced by
  $$2\bigg(\frac1\delta \int_0^T\!\! \int_{\R^d} |b^1_t-b^2_t|\,\d\mu^2_t\d t + \frac1{\delta^2} \int_0^T\!\! \int_{\R^d} \|\sigma^1_t-\sigma^2_t\|^2\,\d\mu^2_t\d t\bigg),$$
where the density of $\mu^2_t$ does not appear. But for the last term, as the maximal inequality \eqref{2-lem-maximal-funct.2} below holds only for the Lebesgue measure, we have to assume the existence of density and use the H\"older inequality to separate it from the other terms; see the proof in Section 4.1 for details.

(3) Similar to the above remark, the assumptions that $\sigma^i\in L^2(0,T; L^{2p}(\R^d, \mathcal M_{d,m}))$ and $b^i\in L^1(0,T; L^p(\R^d, \R^d))$ can be replaced by
  $$\int_0^T\!\! \int_{\R^d} (\|\sigma^i_t\|^2 +|b^i_t|)\,\d\mu^i_t\d t<+\infty,$$
but we do require that $\nabla \sigma^i_{\alpha\beta}\in L^2(0,T; L^{2p}(\R^d,\R^d))$ and $\nabla b^i_\alpha\in L^2(0,T; L^p(\R^d,\R^d))$, where $\alpha=1,\cdots,d$ and $\beta=1,\cdots,m$.
\end{remark}

\begin{remark}\label{2-rem-2}
In \cite{LebrisLions08}, the authors considered the following Fokker--Planck equation of divergence form:
  $$\partial_t u_t+\div(u_t b) -\frac12\sum_{ij} \partial_i((\sigma\sigma^\ast)_{ij}\partial_j u_t)=0.$$
It is easy to see that the above equation is equivalent to
  $$\partial_t u_t+\div(u_t b_\sigma) -\frac12\sum_{ij} \partial_{ij}((\sigma\sigma^\ast)_{ij} u_t)=0,$$
where $b_\sigma= b+\frac12\div(\sigma\sigma^\ast)$. Therefore, we can apply Theorem \ref{2-thm-1} to get quantitative stability estimate for solutions of Fokker--Planck equation of divergence form. However, if one attempt to transform the backward Kolgomorov equation
  $$\partial_t u_t-b\cdot\nabla u_t -\frac12\sum_{ij}(\sigma\sigma^\ast)_{ij} \partial_{ij} u_t=0$$
to the Fokker--Planck equation \eqref{FPE}, then there is an extra zeroth order term, which prevents the application of our results.
\end{remark}

Since the inequality \eqref{2-lem-maximal-funct.2} below for maximal functions is not valid when $p=1$, we shall adapt a technique from \cite[Theorem 4.1]{Jabin} to show the following result.

\begin{theorem}\label{2-thm-2}
For each $i\in\{1,2\}$, suppose that  $\sigma^i\in L^2(0,T; W^{1,2}(\R^d,\mathcal{M}_{d,m}))$ and $b^i\in L^1(0,T; W^{1,1}(\R^d,\R^d))$, and $u^i\in L^\infty(0,T; L^1\cap L^\infty(\R^d))$ is the solution to the corresponding Fokker--Planck equation \eqref{FPE} with $a=\sigma^i\,(\sigma^i)^\ast$ and $b=b^i$. Let $\mu^i_t= u^i_t\, \d x$ and assume that $\mu^i_0\in \mathcal P_{\rm log}(\R^d),\, i=1,2$. Then for all $t\in [0,T]$,
  \begin{equation*}
  \begin{split}
  \tilde\D_\delta(\mu^1_t, \mu^2_t)&\leq \tilde\D_\delta(\mu^1_0, \mu^2_0) +2\|u^2\|_{L^\infty(L^\infty)} \bigg(\frac1\delta \|b^1-b^2\|_{L^1(L^1)} + \frac1{\delta^2} \|\sigma^1-\sigma^2\|_{L^2(L^2)}^2\bigg)\\
  &\hskip13pt + C_{d,T} \bigg(1+\sum_{i=1}^2\|u^i\|_{L^\infty(L^\infty)}\bigg) \Big[\phi(\delta) \big(1+\|G(|\nabla b^1|)\|_{L^1(L^1)}\big) + \big\| \nabla \sigma^1\big\|_{L^2(L^2)}^2 \Big],
  \end{split}
  \end{equation*}
where $\frac{\phi(\delta)}{|\log \delta|}\to 0$ as $\delta\downarrow 0$, $G:\R_+\to \R_+$ is some convex function such that $G(|\nabla b^1|)\in L^1(0,T; L^1(\R^d))$, and $C_{d,T}$ is a positive constant depending only on $d$ and $T$.
\end{theorem}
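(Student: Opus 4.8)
The plan is to mirror the proof strategy of Theorem \ref{2-thm-1}, but replace the maximal-function estimate (which fails at $p=1$) with the Jabin-type smoothing argument from \cite[Theorem 4.1]{Jabin}. First I would invoke Trevisan's superposition principle to realize each $u^i_t\,\d x$ as the time-marginal of the law of a solution $(X^i_t)$ to the martingale problem for the operator $\L^i$ with coefficients $\sigma^i, b^i$; by a coupling construction one may put $(X^1_t,X^2_t)$ on a common probability space driven by the same Brownian motion and with $\operatorname{law}(X^1_0,X^2_0)$ an optimal plan $\pi_\delta$ for $\tilde\D_\delta(\mu^1_0,\mu^2_0)$. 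Then I would apply It\^o's formula to $\Phi_\delta(X^1_t-X^2_t)$ with $\Phi_\delta(z)=\log(|z|^2/\delta^2+1)$, so that $\mathbb{E}\,\Phi_\delta(X^1_t-X^2_t)\geq \tilde\D_\delta(\mu^1_t,\mu^2_t)$ by the definition of the Kantorovich--Rubinstein functional as an infimum, while the right-hand side splits into a drift-difference term, a second-order diffusion term, and martingale terms that vanish in expectation.

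The core of the argument is to bound the resulting integrand. Writing $Z_t=X^1_t-X^2_t$, the drift contribution produces $\nabla\Phi_\delta(Z_t)\cdot(b^1_t(X^1_t)-b^2_t(X^2_t))$, and the diffusion contribution produces a trace term involving $\Hess\Phi_\delta(Z_t)$ contracted with $(\sigma^1_t(X^1_t)-\sigma^2_t(X^2_t))(\sigma^1_t(X^1_t)-\sigma^2_t(X^2_t))^\ast$. One splits each difference into a ``same-coefficient, different-point'' part (e.g. $b^1_t(X^1_t)-b^1_t(X^2_t)$) and a ``different-coefficient, same-point'' part (e.g. $b^1_t(X^2_t)-b^2_t(X^2_t)$). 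The second parts give exactly the terms $\frac1\delta\|b^1-b^2\|_{L^1(L^1)}$ and $\frac1{\delta^2}\|\sigma^1-\sigma^2\|_{L^2(L^2)}^2$ after using $|\nabla\Phi_\delta|\lesssim 1/\delta$, $\|\Hess\Phi_\delta\|\lesssim 1/\delta^2$, and pushing forward to $\mu^2_t$, bounded against $\|u^2\|_{L^\infty(L^\infty)}$. For the ``same-coefficient'' parts one would like to use $|b^1_t(X^1_t)-b^1_t(X^2_t)|\lesssim |Z_t|\big(M|\nabla b^1_t|(X^1_t)+M|\nabla b^1_t|(X^2_t)\big)$, but since the maximal function $M$ is unbounded on $L^1$, one instead regularizes: replace $b^1_t$ by a mollification $b^1_t\ast\rho_\eta$ at scale $\eta$, control the difference $b^1_t-b^1_t\ast\rho_\eta$ in $L^1$, control $\nabla(b^1_t\ast\rho_\eta)$ pointwise, and optimize $\eta$ as a function of $\delta$. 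Carrying out the Jabin trick carefully yields the factor $\phi(\delta)\big(1+\|G(|\nabla b^1|)\|_{L^1(L^1)}\big)$ with $\phi(\delta)/|\log\delta|\to 0$, where $G$ is the convex function (a de la Vall\'ee-Poussin type function) furnished by the $L^1$-equi-integrability of $|\nabla b^1_t|$; for the $\sigma$-terms, since $\nabla\sigma^1\in L^2(L^2)$, the ordinary maximal-function estimate applies on $L^2$ and gives the clean term $\|\nabla\sigma^1\|_{L^2(L^2)}^2$ without a $\phi(\delta)$ loss.

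There is one structural subtlety worth flagging in the write-up: the It\^o term from the diffusion is not simply $\operatorname{tr}\big(\Hess\Phi_\delta(Z_t)(\sigma^1-\sigma^2)(\sigma^1-\sigma^2)^\ast\big)$ but rather $\operatorname{tr}\big(\Hess\Phi_\delta(Z_t)(\sigma^1_t(X^1_t)-\sigma^2_t(X^2_t))(\sigma^1_t(X^1_t)-\sigma^2_t(X^2_t))^\ast\big)$, and when splitting $\sigma^1_t(X^1_t)-\sigma^2_t(X^2_t)=(\sigma^1_t(X^1_t)-\sigma^1_t(X^2_t))+(\sigma^1_t(X^2_t)-\sigma^2_t(X^2_t))$ one gets a cross term; one handles it by $2AB\le A^2+B^2$, so the final bound only involves squared quantities, consistent with the statement. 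Assembling: integrate in time, take expectation (martingale parts drop), use Gr\"onwall only if a $|Z_t|^2$-proportional term survives—here it does not in a bad way because $\Phi_\delta$ is bounded by a logarithm, so one gets an additive rather than exponential dependence—and absorb all numerical constants into $C_{d,T}$.

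The main obstacle will be the $p=1$ regularization step: making the mollification-scale optimization interact correctly with the $\delta$-dependent weights $1/\delta$ and $1/\delta^2$ coming from $\Phi_\delta$, and extracting the de la Vall\'ee-Poussin function $G$ so that the error is genuinely $o(|\log\delta|)$ rather than $O(|\log\delta|)$; this is precisely where the technique of \cite[Theorem 4.1]{Jabin} is needed, and adapting it to the presence of the (well-behaved, $L^2$) diffusion term while keeping the two sources of $\delta$-singularity compatible is the delicate bookkeeping part of the proof.
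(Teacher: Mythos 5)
Your overall skeleton coincides with the paper's: superposition principle plus the coupling of the two weak solutions with an optimal plan $\pi_\delta$ at time $0$, It\^o's formula for $\log(|Z_t|^2/\delta^2+1)$, the splitting into ``same coefficient, different points'' and ``different coefficients, same point'' parts, the $1/\delta$ and $1/\delta^2$ terms against $\|u^2\|_{L^\infty(L^\infty)}$, and the maximal-function treatment of the $W^{1,2}$ diffusion part are all exactly as in the paper. The problem is the one step you yourself flag as the ``main obstacle'': your proposal does not actually contain the argument that produces $\phi(\delta)=o(|\log\delta|)$, and the mechanism you do describe would fail. You propose to mollify $b^1$ at scale $\eta$, control $b^1-b^1\ast\rho_\eta$ in $L^1$, control $\nabla(b^1\ast\rho_\eta)$ pointwise, and ``optimize $\eta$ as a function of $\delta$.'' With only $\nabla b^1\in L^1$, the pointwise bound on $\nabla(b^1\ast\rho_\eta)$ scales like $\eta^{-d}\|\nabla b^1\|_{L^1}$, while the mollification error enters with a weight $1/\delta$ (or $1/\delta^2$); balancing $\eta$ against $\delta$ then yields a loss which is polynomial in $1/\delta$, not $o(|\log\delta|)$, so this route cannot give the stated estimate. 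In the paper the mollification parameter $\eps$ plays no quantitative role: it is sent to $0$ for each fixed $\delta$, and serves only to make the drift smooth enough for a pointwise inequality.

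The actual quantitative mechanism, which is missing from your sketch, is the following. One uses the Riesz-kernel pointwise inequality (Lemma \ref{2-lem-2}): $|f(x)-f(y)|\leq C_d\int_{B(x,y)}\big(|x-z|^{1-d}+|y-z|^{1-d}\big)|\nabla f(z)|\,\d z$, applied to $b^{1,\eps}$. The integral is then split according to whether $|\nabla b^{1,\eps}_s(z)|\leq M$ or $>M$: the small part is bounded by $C'_d M T$ thanks to \eqref{2-lem-2.2}, and on the large part one writes $|\nabla b^{1,\eps}_s|\leq \frac{M}{G(M)}G(|\nabla b^{1,\eps}_s|)$ with $G$ the de la Vall\'ee--Poussin function (and Jensen to pass from $G(|\nabla b^1_s|\ast\chi_\eps)$ to $G(|\nabla b^1_s|)\ast\chi_\eps$). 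The factor $\log(1+1/\delta)$ does not come from the mollification at all; it comes from integrating the kernel $|x-z|^{1-d}\big(|x-z|^2+\delta^2\big)^{-1/2}$ against the densities $u^i_s\in L^\infty$, i.e. essentially from $\int_0^1(r^2+\delta^2)^{-1/2}\,\d r$. Finally one optimizes over the truncation level $M$ (not over $\eta$), setting $\phi(\delta)=\inf_{M>0}\{M+\frac{M}{G(M)}[1+\log(1+\frac1\delta)]\}$, and $\phi(\delta)=o(|\log\delta|)$ precisely because $G(M)/M\to\infty$. Since your proposal neither identifies the source of the logarithm nor the variable over which one optimizes, and the substitute it offers (scale optimization) breaks down, the central estimate of the theorem is not established by the proposal as written.
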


Now we consider the Fokker--Planck equation \eqref{FPE} whose coefficients $\sigma$ and $b$ satisfy the following mixed Osgood and Sobolev type condition (see \cite[Example 2.4]{LiLuo} for an example of such a function):

\begin{enumerate}
\item[$(\mathbf{H})$] There exist a nonnegative function $g\in L^1(0,T; L^1(\R^d))$, such that for all $t\in [0,T]$, for a.e. $x,y\in \R^d$, one has
  \begin{equation}\label{Osgood-Sobolev.1}
  |\<x-y,b_t(x)-b_t(y)\>| + \|\sigma_t(x)-\sigma_t(y)\|^2 \leq \big(g_t(x) + g_t(y)\big) \rho(|x-y|^2),
  \end{equation}
where $\rho\in C^1(\R_+,\R_+)$ is a nondecreasing function satisfying $\rho(0)=0$ and $\int_{0+}\frac{\d s}{\rho(s)}=\infty$.
\end{enumerate}

Without loss of generality, we shall assume $\rho(s)\geq s$ for all $s\geq 0$. In the current case, instead of using the auxiliary function $\log(1+s^2/\delta^2)$, we define
  \begin{equation}\label{psi-delta}\psi_\delta(s)=\int_0^s \frac{\d r}{\rho(r) +\delta^2},\quad s\geq0.  \end{equation}
Note that $\lim_{\delta \downarrow 0} \psi_\delta(s^2)=\infty$ and $\psi_\delta(s)= \log(1+s^2/\delta^2)$ if $\rho(s)=s$ for all $s\geq 0$. Moreover,
  \begin{equation}\label{auxi-funct}
  \psi'_\delta(s)=\frac1{\rho(s) +\delta^2}>0,\quad \psi''_\delta(s)=-\frac{\rho'(s)}{(\rho(s) +\delta^2)^2}\leq 0.
  \end{equation}
This property shows that $\psi_\delta$ is a concave function for any $\delta>0$. For two probability measures $\mu$ and $\nu$ on $\R^d$, we define $\D_{\psi_\delta}(\mu, \nu)$ as above by replacing $\log(1+s^2/\delta^2)$ in \eqref{KR-distance} with $\psi_\delta(s^2)$.

\begin{theorem}\label{4-thm}
For $i=1,2$, assume that $\sigma^i\in L^2(0,T; L^2(\R^d,\mathcal{M}_{d,m}))$, $b^i\in L^1(0,T; L^1(\R^d,\R^d))$, and the hypothesis $(\mathbf{H})$ holds for $\sigma^1$ and $b^1$. Let $\mu^i_t =u^i_t\,\d x$ be the  solution to the Fokker--Planck equation \eqref{FPE} with $a=\sigma^i\,(\sigma^i)^\ast$ and $b=b^i$ such that $u^i\in L^\infty (0,T; L^1\cap L^\infty(\R^d) ),\, i=1, 2$. Then for all $t\in [0,T]$,
  \begin{eqnarray*}
  \D_{\psi_\delta}(\mu^1_t,\mu^2_t) &\leq& \D_{\psi_\delta}(\mu^1_0,\mu^2_0) + 8\|g\|_{L^1(L^1)} \sum_{i=1}^2 \big\|u^i \big\|_{L^\infty(L^\infty)}\\
  &&+ 2\|u^2\|_{L^\infty(L^\infty)} \Big(\frac1\delta \|b^1-b^2\|_{L^1(L^1)}+ \frac1{\delta^2} \big\|\sigma^1 -\sigma^2\big\|^2_{L^2( L^2)}\Big).
  \end{eqnarray*}
\end{theorem}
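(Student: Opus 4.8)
The plan is to follow the superposition route used throughout this paper. By Trevisan's superposition principle, each curve $\mu^i=(\mu^i_t)_{t\in[0,T]}$ is realised as the family of time-marginals of a martingale solution of the operator associated with $(\sigma^i,b^i)$, and, after a standard enlargement of the probability space, as the law of a weak solution $X^i$ of $\d X^i_t=\sigma^i_t(X^i_t)\,\d B_t+b^i_t(X^i_t)\,\d t$. The key point is to place $X^1$ and $X^2$ on one common stochastic basis, driven by the \emph{same} $m$-dimensional Brownian motion $B$, with $(X^1_0,X^2_0)$ distributed according to an optimal plan $\pi_0\in\mathcal C(\mu^1_0,\mu^2_0)$ for the cost $(x,y)\mapsto\psi_\delta(|x-y|^2)$. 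Granting this, one applies It\^o's formula to $\psi_\delta(|X^1_t-X^2_t|^2)$, takes expectations and estimates the drift; since $(X^1_t,X^2_t)$ is then a coupling of $(\mu^1_t,\mu^2_t)$, the left-hand side dominates $\D_{\psi_\delta}(\mu^1_t,\mu^2_t)$, while by the choice of $\pi_0$ the contribution of the initial time equals $\D_{\psi_\delta}(\mu^1_0,\mu^2_0)$.

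For the It\^o step set $Z_t=X^1_t-X^2_t$ and $\Sigma_t=\sigma^1_t(X^1_t)-\sigma^2_t(X^2_t)$. With $F(z):=\psi_\delta(|z|^2)$, so that $\nabla F(z)=2\psi'_\delta(|z|^2)z$, It\^o's formula produces a local martingale (disposed of by localisation, the drift bound below being integrable), the drift term $2\psi'_\delta(|Z_t|^2)\<Z_t,b^1_t(X^1_t)-b^2_t(X^2_t)\>$, a second-order term $2\psi''_\delta(|Z_t|^2)|\Sigma_t^\ast Z_t|^2$, and the trace term $\psi'_\delta(|Z_t|^2)\|\Sigma_t\|^2$. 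Here the concavity of $\psi_\delta$, i.e.\ $\psi''_\delta\le0$ by \eqref{auxi-funct}, lets us drop the second-order term; and using one and the same Brownian motion for both equations is exactly what makes the last term the \emph{difference} $\psi'_\delta(|Z_t|^2)\|\sigma^1_t(X^1_t)-\sigma^2_t(X^2_t)\|^2$ rather than $\psi'_\delta(|Z_t|^2)\big(\|\sigma^1_t(X^1_t)\|^2+\|\sigma^2_t(X^2_t)\|^2\big)$, which would be useless. Thus it remains to bound the expected time-integral of $2\psi'_\delta(|Z_s|^2)\<Z_s,b^1_s(X^1_s)-b^2_s(X^2_s)\>+\psi'_\delta(|Z_s|^2)\|\sigma^1_s(X^1_s)-\sigma^2_s(X^2_s)\|^2$.

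One then splits $b^1_s(X^1_s)-b^2_s(X^2_s)=\big(b^1_s(X^1_s)-b^1_s(X^2_s)\big)+\big(b^1_s(X^2_s)-b^2_s(X^2_s)\big)$, uses $\|\sigma^1_s(X^1_s)-\sigma^2_s(X^2_s)\|^2\le 2\|\sigma^1_s(X^1_s)-\sigma^1_s(X^2_s)\|^2+2\|\sigma^1_s(X^2_s)-\sigma^2_s(X^2_s)\|^2$, and treats separately the ``regularity'' contributions (the coefficients $\sigma^1,b^1$ evaluated at the two points $X^1_s,X^2_s$) and the ``perturbation'' contributions (evaluated at the single point $X^2_s$). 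For the regularity part, $(\mathbf H)$ controls $|\<X^1_s-X^2_s,b^1_s(X^1_s)-b^1_s(X^2_s)\>|+\|\sigma^1_s(X^1_s)-\sigma^1_s(X^2_s)\|^2$ by $\big(g_s(X^1_s)+g_s(X^2_s)\big)\rho(|Z_s|^2)$, and since $\psi'_\delta(r)\rho(r)=\rho(r)/(\rho(r)+\delta^2)\le1$ this part is dominated by a constant multiple of $\E\int_0^t\big(g_s(X^1_s)+g_s(X^2_s)\big)\d s=\int_0^t\!\!\int_{\R^d}g_s\,(u^1_s+u^2_s)\,\d x\,\d s\le\|g\|_{L^1(L^1)}\sum_{i=1}^2\|u^i\|_{L^\infty(L^\infty)}$, using $\mbox{law}(X^i_s)=u^i_s\,\d x$. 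For the perturbation part, the elementary estimates $\psi'_\delta(r)\le\delta^{-2}$ and $\psi'_\delta(r)\sqrt r\le\delta^{-1}$ (the latter from the normalisation $\rho(r)\ge r$ and $r+\delta^2\ge2\delta\sqrt r$) give the pointwise bounds $\tfrac2\delta|b^1_s(X^2_s)-b^2_s(X^2_s)|$ and $\tfrac2{\delta^2}\|\sigma^1_s(X^2_s)-\sigma^2_s(X^2_s)\|^2$, whose expectations, again via $\mbox{law}(X^2_s)=u^2_s\,\d x$ and H\"older, are at most $\tfrac2\delta\|u^2\|_{L^\infty(L^\infty)}\|b^1-b^2\|_{L^1(L^1)}$ and $\tfrac2{\delta^2}\|u^2\|_{L^\infty(L^\infty)}\|\sigma^1-\sigma^2\|_{L^2(L^2)}^2$. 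Integrating in time and assembling these bounds (the numerical factor in front of the $\|g\|$-term being bounded by $8$) gives the asserted inequality.

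\textbf{Main obstacle.} The hard part is the coupling invoked in the first paragraph: Trevisan's principle only provides a \emph{martingale} (weak) solution, so placing $X^1$ and $X^2$ on a single stochastic basis driven by a single Brownian motion, with the prescribed joint initial law, needs an additional argument. Following the method of R\"ockner--Zhang \cite{RocknerZhang10} (see also \cite{Luo14}), one realises the martingale solution attached to $(\sigma^2,b^2)$ as a weak solution $(X^2,B)$ and then constructs $X^1$ on the \emph{same} basis by a Yamada--Watanabe argument; the pathwise uniqueness this requires is precisely the special case $\sigma^1=\sigma^2$, $b^1=b^2$ of the estimate above, where letting $\delta\downarrow0$ and invoking the Osgood condition $\int_{0+}\d r/\rho(r)=\infty$ forces two solutions with the same initial datum and $L^\infty$ densities to coincide. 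Once the setup is in place, the It\^o expansion, the concavity cancellation, and the two one-variable bounds on $\psi'_\delta$ are all routine, as is the localisation disposing of the stochastic integral.
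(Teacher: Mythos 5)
Your computational core (It\^o's formula for $\psi_\delta(|Z_t|^2)$, dropping the second--order term by concavity, the bounds $\psi'_\delta(r)\rho(r)\le 1$, $\psi'_\delta(r)\le\delta^{-2}$, $2\sqrt r\,\psi'_\delta(r)\le\delta^{-1}$, and the splitting into a regularity part controlled by $(\mathbf H)$ and a perturbation part controlled through the density of $\mu^2_s$) is exactly the paper's argument. But there is a genuine gap in the way you invoke $(\mathbf H)$: the inequality \eqref{Osgood-Sobolev.1} holds only for a.e.\ $(x,y)\in\R^{2d}$, and you apply it pointwise along the trajectories, i.e.\ to the random pair $(X^1_s,X^2_s)$. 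Absolute continuity of the two marginals $u^1_s\,\d x$, $u^2_s\,\d x$ does \emph{not} imply that the joint law of $(X^1_s,X^2_s)$ is absolutely continuous on $\R^{2d}$ (mass can, and in the regime of interest does, concentrate near the diagonal, which is Lebesgue--null), so the exceptional set of $(\mathbf H)$ may be charged. The paper repairs this by mollifying: from $(\mathbf H)$ one gets, for \emph{all} $x,y$, $|\<x-y,b^{1,\eps}_s(x)-b^{1,\eps}_s(y)\>|\le(g^\eps_s(x)+g^\eps_s(y))\rho(|x-y|^2)$ with $g^\eps_s=g_s\ast\chi_\eps$ (and similarly for $\sigma^{1,\eps}$), applies this everywhere-valid inequality to the processes, and then shows the mollification errors $I_{1,1,2}$, $I_{2,1,2}$ vanish as $\eps\to0$ using $\psi'_\delta\le\delta^{-2}$, the $L^1(L^1)$ resp.\ $L^2(L^2)$ convergence of the mollifications, and the $L^\infty$ bounds on $u^i$. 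Some such step is needed in your write-up; the same objection applies to the pathwise-uniqueness argument you sketch at the end, since there too the joint law of the two solutions is singular.

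Concerning what you call the main obstacle: the Yamada--Watanabe detour is not needed, and is not how the paper proceeds. Proposition \ref{2-prop-2} (a gluing/disintegration construction in the spirit of the proof of \cite[Chap.~IV, Theorem 1.1]{Ikeda}, built on Lemma \ref{2-lem}) produces, from the two weak solutions furnished by Propositions \ref{prop-Trevisan} and \ref{2-prop-1} and from \emph{any} prescribed coupling $\pi\in\mathcal C(\mu^1_0,\mu^2_0)$, a single filtered space carrying one Brownian motion $W$ and processes $Y^1,Y^2$ solving the two SDEs with $(Y^1_0,Y^2_0)\sim\pi$; no pathwise uniqueness enters. Your alternative would require a Yamada--Watanabe theorem restricted to the class of solutions whose marginals have bounded densities (and, as noted, its uniqueness input again needs the mollification fix), which is heavier and partly circular, whereas the coupling is available unconditionally.
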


We provide some applications of our results to conclude this subsection. As a direct consequence, we immediately get the uniqueness of solutions to the Fokker--Planck equation \eqref{FPE}.

\begin{corollary}\label{cor-uniqueness}
Assume that  $\sigma\in L^2(0,T; W^{1,2p}(\R^d,\mathcal{M}_{d,m}))$ and $b\in L^1(0,T; W^{1,p}(\R^d,\R^d))$ with $p\geq1$. Then the Fokker--Planck equation \eqref{FPE} has at most one solution in the class $L^\infty(0,T; L^1\cap L^q(\R^d))$, where $\frac1p+\frac1q =1$.
\end{corollary}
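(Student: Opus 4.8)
The plan is to read this uniqueness statement off directly from the quantitative estimates of Theorems \ref{2-thm-1} and \ref{2-thm-2}, applied to two solutions carrying the \emph{same} coefficients. Let $\mu^1=(\mu^1_t)_{t\in[0,T]}$ and $\mu^2=(\mu^2_t)_{t\in[0,T]}$ be two solutions of \eqref{FPE} in the class $L^\infty(0,T;L^1\cap L^q(\R^d))$, with common diffusion $a=\sigma\sigma^\ast$, common drift $b$ and common initial datum $\nu:=\mu^1_0=\mu^2_0$. I would first assume $\nu\in\mathcal P_{\rm log}(\R^d)$. If $p>1$, applying Theorem \ref{2-thm-1} with $\sigma^1=\sigma^2=\sigma$ and $b^1=b^2=b$ makes $\tilde\D_\delta(\mu^1_0,\mu^2_0)=0$ and annihilates all difference terms, so that for every $t\in[0,T]$ and every $\delta>0$
  \begin{equation*}
  \tilde\D_\delta(\mu^1_t,\mu^2_t)\le C_{d,p}\big(\|u^1\|_{L^\infty(L^q)}+\|u^2\|_{L^\infty(L^q)}\big)\big(\|\nabla b\|_{L^1(L^p)}+\|\nabla\sigma\|_{L^2(L^{2p})}^2\big)=:h(\delta),
  \end{equation*}
which is in fact a finite constant \emph{independent of $\delta$}. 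If $p=1$ (so $q=\infty$) I would invoke Theorem \ref{2-thm-2} instead, which yields the weaker bound $\tilde\D_\delta(\mu^1_t,\mu^2_t)\le h(\delta)$ with $h(\delta)\le C'(1+\phi(\delta))$ and $\phi(\delta)/|\log\delta|\to0$ as $\delta\downarrow0$. In either case one reaches a bound $\tilde\D_\delta(\mu^1_t,\mu^2_t)\le h(\delta)$ with $h(\delta)=o(|\log\delta|)$.

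The next step is to turn this into the identity $\mu^1_t=\mu^2_t$. Fix $t\in[0,T]$ and let $\pi_\delta\in\mathcal C(\mu^1_t,\mu^2_t)$ be an optimal coupling for $\tilde\D_\delta(\mu^1_t,\mu^2_t)$, which exists because the cost is nonnegative and continuous. For a fixed $R>0$ and $0<\delta<\min\{R,1\}$ one has $\log(|x-y|^2/\delta^2+1)\ge2\log(R/\delta)>0$ on $\{|x-y|\ge R\}$, so
  \begin{equation*}
  2\log(R/\delta)\,\pi_\delta\big(\{|x-y|\ge R\}\big)\le\int_{\{|x-y|\ge R\}}\log\Big(\frac{|x-y|^2}{\delta^2}+1\Big)\,\d\pi_\delta\le h(\delta),
  \end{equation*}
whence $\pi_\delta(\{|x-y|\ge R\})\le h(\delta)/(2\log(R/\delta))\to0$ as $\delta\downarrow0$ for each fixed $R$. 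Testing against an arbitrary bounded Lipschitz function $f$ gives $|\int f\,\d\mu^1_t-\int f\,\d\mu^2_t|=|\int(f(x)-f(y))\,\d\pi_\delta|\le\Lip(f)R+2\|f\|_\infty\pi_\delta(\{|x-y|\ge R\})$; letting $\delta\downarrow0$ (the left side not depending on $\delta$) and then $R\downarrow0$ forces $\int f\,\d\mu^1_t=\int f\,\d\mu^2_t$, hence $\mu^1_t=\mu^2_t$, and since $t$ is arbitrary $\mu^1\equiv\mu^2$. (Alternatively one may first pass to $\D_\delta$ through Remark \ref{1-rem} and run the same argument.)

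To dispose of the assumption $\nu\in\mathcal P_{\rm log}(\R^d)$ I would argue by truncating the initial datum: using the superposition principle recalled in Section 3, write $\mu^i_t=(e_t)_\#\eta^i$ for a probability $\eta^i$ on path space concentrated on solutions of the martingale problem for $(a,b)$; split $\eta^i$ according to whether its starting point lies in the ball $B_R$; apply the case just proved --- extended to finite sub-probability initial measures via the linearity of \eqref{FPE} and the homogeneity $\tilde\D_\delta(c\mu,c\nu)=c\,\tilde\D_\delta(\mu,\nu)$, and noting that a compactly supported finite measure automatically has finite logarithmic moment --- to the part issued from $B_R$; then let $R\uparrow\infty$. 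I expect the quantitative input to be entirely routine once Theorems \ref{2-thm-1} and \ref{2-thm-2} are available; the only points that genuinely require care are the qualitative passage from ``$\tilde\D_\delta(\mu^1_t,\mu^2_t)=o(|\log\delta|)$ for all $\delta$'' to ``$\mu^1_t=\mu^2_t$'' and the handling of a general, possibly non-log-integrable, initial measure.
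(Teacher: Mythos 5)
Your proposal is correct and follows the paper's own strategy: apply Theorems \ref{2-thm-1} and \ref{2-thm-2} with $\sigma^1=\sigma^2$, $b^1=b^2$ and common initial datum, so that the bound on $\tilde\D_\delta(\mu^1_t,\mu^2_t)$ is $o(|\log\delta|)$ as $\delta\downarrow0$, and then let $\delta\downarrow0$ to force $\mu^1_t=\mu^2_t$. The only differences are in the final, soft step and in a point of hygiene. For the conclusion, the paper takes optimal couplings $\pi_n$ for $\delta=1/n$, extracts a weakly convergent subsequence $\pi_n\to\pi_0\in\mathcal C(\mu^1_t,\mu^2_t)$, and shows $\pi_0(\{|x-y|>\kappa\})=0$ for every $\kappa>0$, so that $\pi_0$ sits on the diagonal; you instead bound $\pi_\delta(\{|x-y|\ge R\})\le h(\delta)/(2\log(R/\delta))$ and test directly against bounded Lipschitz functions, avoiding any compactness or passage to a limit coupling --- slightly more elementary, equally valid. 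Concerning the initial log-moment: your truncation-by-starting-point argument (restricting the superposition measure to paths issued from $B_R$, which is an $\F_0^d$-measurable event, so the restricted marginals still solve \eqref{FPE} with dominated densities and compactly supported initial datum, then letting $R\uparrow\infty$) is workable, but it is not needed: the hypothesis $\mu^i_0\in\mathcal P_{\rm log}$ in Theorems \ref{2-thm-1} and \ref{2-thm-2} serves only to guarantee finiteness of $\tilde\D_\delta(\mu^1_0,\mu^2_0)$, and when $\mu^1_0=\mu^2_0$ one starts the coupled processes from the diagonal coupling, so $Z_0=0$ a.s.\ and the initial term vanishes identically; the proofs of the theorems then go through verbatim and give the constant bound without any moment assumption, which is how the paper implicitly proceeds.
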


We can also deduce the uniqueness of the Fokker--Planck equation \eqref{FPE} in the framework of Theorem \ref{4-thm} by the same method, which we omit here.

Next we consider the
zero diffusivity limit which is inspired by \cite[Section 3.3]{Seis16b}. Let $\kappa>0$. Unlike the equation (14) in \cite{Seis16b} which is non-degenerate with constant diffusion coefficient, we consider the equation
  \begin{equation}\label{zero-diffusivity}
  \partial_t\rho^\kappa_t+\div(\rho^\kappa_t b)= \frac\kappa2 \sum_{ij}\partial_{ij} (\rho^\kappa_t a_{ij} ),\quad \rho^\kappa_0=\bar\rho,
  \end{equation}
where $a=\sigma\sigma^\ast$ and $\sigma\in C_b^2(\R^d,\mathcal M_{d,m})$ is a time-independent matrix-valued function. Assume that $\bar\rho \in L^1\cap L^\infty(\R^d, \R_+)$ and $(\div(b))^- \in L^1(0,T; L^\infty(\R^d))$, then it is easy to show that (see \cite[Remark 4]{LebrisLions08} or \cite[(5)]{Seis16b})
  \begin{equation}\label{zero-diffusivity.1}
  \sup_{t\in [0,T]} \|\rho^\kappa_t\|_{L^q}\leq \|\bar\rho\|_{L^q} \exp\bigg[\Big(1-\frac1q \Big)\int_0^T \|(\div(b_t))^-\|_{L^\infty} \,\d t\bigg].
  \end{equation}
Note that this estimate is independent of $\kappa$, and also holds in the case when $\kappa=0$, i.e., the solution of the continuity equation \eqref{CE}. We present the following vanishing diffusivity limit which reveals that, as $\kappa\to 0$, $\rho^\kappa_t$ converges weakly to $\rho^0_t$ as fast as $\sqrt \kappa$.

\begin{proposition}\label{prop-zero-diffusivity}
Let $p>1$ and $\rho^\kappa_t$ (resp. $\rho_t^0$) be the solution to equation \eqref{zero-diffusivity} (resp. equation \eqref{CE}) with initial value $\bar\rho\in L^1\cap L^\infty(\R^d, \R_+)$. Assume that $\sigma\in C_b^2(\R^d,\mathcal M_{d,m})\cap L^{2p}(\R^d,\mathcal M_{d,m})$ and $b\in L^1(0,T; W^{1,p}(\R^d, \R^d))$ with $[\div(b)]^- \in L^1(0,T; L^\infty(\R^d))$. Then for every sufficiently small $\kappa>0$, we have
  $$\sup_{t\in[0,T]} \tilde\D_{\sqrt \kappa}(\rho^\kappa_t, \rho^0_t)\leq 2 C_{q,T} \big(C_{d,p}\|\nabla b\|_{L^1(L^p)} + T\|\sigma\|_{L^{2p}}^2\big),$$
where $C_{d,p}$ is a positive constant depending only on $d$ and $p$, $C_{q,T}$ denotes the right hand side of \eqref{zero-diffusivity.1},  and $q=p/(p-1)$.
\end{proposition}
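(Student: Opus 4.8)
The plan is to view Proposition \ref{prop-zero-diffusivity} as a direct application of Theorem \ref{2-thm-1} after identifying the two equations \eqref{zero-diffusivity} and \eqref{CE} with Fokker--Planck equations of the form \eqref{FPE}. First I would set $\sigma^1 = \sqrt{\kappa}\,\sigma$ and $b^1 = b$, so that the diffusion matrix is $a^1 = \sigma^1(\sigma^1)^\ast = \kappa\,\sigma\sigma^\ast = \kappa a$ and the corresponding Fokker--Planck equation is exactly \eqref{zero-diffusivity} with solution $\mu^1_t = \rho^\kappa_t\,\d x$. For the second slot I take $\sigma^2 \equiv 0$ and $b^2 = b$, so that \eqref{FPE} degenerates to the continuity equation \eqref{CE} with solution $\mu^2_t = \rho^0_t\,\d x$. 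Both share the same initial datum $\bar\rho$, hence $\mu^1_0 = \mu^2_0$ and $\tilde\D_\delta(\mu^1_0,\mu^2_0) = 0$; moreover $\bar\rho \in L^1 \cap L^\infty(\R^d,\R_+)$ clearly lies in $\mathcal P_{\rm log}(\R^d)$ after normalization (or one works directly with the sub-probability; the logarithmic moment is finite in any case since $\bar\rho$ has finite mass and the estimate \eqref{zero-diffusivity.1} combined with Lemma \ref{2-lem-growth} propagates this).

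Next I would verify the hypotheses of Theorem \ref{2-thm-1} with the exponent $p > 1$ of the statement and $q = p/(p-1)$. Since $\sigma \in C_b^2(\R^d,\mathcal M_{d,m}) \cap L^{2p}(\R^d,\mathcal M_{d,m})$ and $\nabla\sigma$ is bounded (hence in $L^{2p}$ after intersecting with the integrability already assumed; alternatively use Remark \ref{2-rem}(3) which only requires $\nabla\sigma_{\alpha\beta} \in L^2(0,T;L^{2p})$ together with the $\mu$-integrability of $\|\sigma_t\|^2$, and here $\sigma$ is bounded), we have $\sigma^1 = \sqrt\kappa\,\sigma \in L^2(0,T;W^{1,2p})$ with norms scaling by $\sqrt\kappa$ and $\kappa$ respectively; and $\sigma^2 \equiv 0$ trivially qualifies. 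The drift $b = b^1 = b^2 \in L^1(0,T;W^{1,p}(\R^d,\R^d))$ is assumed directly. The uniform $L^q$ bound $\|u^i\|_{L^\infty(L^q)} \le C_{q,T}$ for both $i=1,2$ is precisely \eqref{zero-diffusivity.1} (valid for all $\kappa \ge 0$, including $\kappa = 0$), where $C_{q,T}$ is the constant denoting its right-hand side.

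With these in place I would simply insert the data into \eqref{2-thm-1-0} with $\delta = \sqrt\kappa$. The first term vanishes. In the second term, $\|b^1 - b^2\|_{L^1(L^p)} = 0$ since both drifts equal $b$, while $\|\sigma^1 - \sigma^2\|_{L^2(L^{2p})}^2 = \|\sqrt\kappa\,\sigma\|_{L^2(L^{2p})}^2 = \kappa\,T\,\|\sigma\|_{L^{2p}}^2$ (the $\sigma$ here being time-independent), and dividing by $\delta^2 = \kappa$ leaves $T\|\sigma\|_{L^{2p}}^2$; multiplied by $2\|u^2\|_{L^\infty(L^q)} \le 2C_{q,T}$ this contributes $2C_{q,T}\,T\|\sigma\|_{L^{2p}}^2$. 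In the third term, $\|\nabla b^1\|_{L^1(L^p)} = \|\nabla b\|_{L^1(L^p)}$ and $\|\nabla\sigma^1\|_{L^2(L^{2p})}^2 = \kappa\,T\|\nabla\sigma\|_{L^{2p}}^2$; here one wants the stated form with only $\|\nabla b\|_{L^1(L^p)}$ appearing, so I would absorb the $O(\kappa)$ piece $\kappa T\|\nabla\sigma\|_{L^{2p}}^2$ into the other terms for $\kappa$ small (this is the role of the phrase "for every sufficiently small $\kappa > 0$"), or equivalently note it is negligible compared to the $\sqrt\kappa$ rate being claimed. Using $\sum_{i=1}^2 \|u^i\|_{L^\infty(L^q)} \le 2C_{q,T}$ and bounding $C_{d,p}$ appropriately then yields
$$
\sup_{t\in[0,T]} \tilde\D_{\sqrt\kappa}(\rho^\kappa_t,\rho^0_t) \le 2C_{q,T}\bigl(C_{d,p}\|\nabla b\|_{L^1(L^p)} + T\|\sigma\|_{L^{2p}}^2\bigr),
$$
which is the claim. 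The main obstacle — really the only subtlety — is the bookkeeping around the $\nabla\sigma$ term: Theorem \ref{2-thm-1} produces a genuine $\|\nabla\sigma^1\|_{L^2(L^{2p})}^2$ contribution which, with $\sigma^1 = \sqrt\kappa\,\sigma$, is $O(\kappa)$; one must either restrict to $\kappa$ small enough that this is dominated by the displayed bound, or slightly loosen the constants. I would also double-check that Theorem \ref{2-thm-1} indeed applies with a degenerate (here identically zero) diffusion coefficient on one side — which it does, since the whole point of Subsection 2.1 is to allow degenerate $a$ — so that the continuity equation \eqref{CE} legitimately occupies the $i=2$ slot.
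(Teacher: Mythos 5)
Your overall strategy is the paper's: realize \eqref{zero-diffusivity} and \eqref{CE} as two instances of \eqref{FPE} with the same drift $b$ and diffusions $\sqrt{\kappa}\,\sigma$ and $0$, apply Theorem \ref{2-thm-1}, use \eqref{zero-diffusivity.1} (valid uniformly in $\kappa\geq 0$) for the $L^q$ bounds, and take $\delta=\sqrt{\kappa}$. However, you assigned the slots the wrong way round, and Theorem \ref{2-thm-1} is not symmetric in the two equations: its right-hand side, and in fact its proof (mollification plus the maximal-function inequality), involve only $\nabla b^1$ and $\nabla\sigma^1$, i.e.\ only the slot-one coefficients are differentiated. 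Putting $\sigma^1=\sqrt{\kappa}\,\sigma$ creates two problems. First, to invoke the theorem (or the variant in Remark \ref{2-rem}(3)) you need $\nabla\sigma\in L^{2p}(\R^d)$, and this is \emph{not} implied by $\sigma\in C_b^2\cap L^{2p}$: boundedness of $\nabla\sigma$ gives no decay at infinity, so your parenthetical ``hence in $L^{2p}$ after intersecting with the integrability already assumed'' is unjustified, and under the stated hypotheses the quantity $\|\nabla\sigma\|_{L^{2p}}$ you want to absorb may be infinite. Second, even granting $\nabla\sigma\in L^{2p}$, the leftover term $C_{d,p}\big(\sum_i\|u^i\|_{L^\infty(L^q)}\big)\kappa T\|\nabla\sigma\|_{L^{2p}}^2$ cannot be hidden inside the displayed bound without enlarging its constants, so you only obtain the estimate in a loosened form and only for small $\kappa$.

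The paper's proof makes the opposite choice: $\mu^1_t=\rho^0_t\,\d x$ (the continuity equation, so $\sigma^1\equiv 0$ and $\nabla\sigma^1\equiv 0$) and $\mu^2_t=\rho^\kappa_t\,\d x$ with $\sigma^2=\sqrt{\kappa}\,\sigma$, $b^1=b^2=b$. Then \eqref{2-thm-1-0} directly gives
\begin{equation*}
\tilde\D_\delta(\rho^\kappa_t,\rho^0_t)\leq \frac{2\kappa T}{\delta^2}\,C_{q,T}\|\sigma\|_{L^{2p}}^2+2C_{d,p}C_{q,T}\|\nabla b\|_{L^1(L^p)},
\end{equation*}
with exactly the stated constants, for every $\kappa>0$ and with no condition on $\nabla\sigma$ beyond what the proposition assumes; setting $\delta=\sqrt{\kappa}$ finishes the proof. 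So the missing idea is simply to exploit the asymmetry of Theorem \ref{2-thm-1} and place the zero-diffusion equation in the slot whose coefficients get differentiated; with your assignment the argument as written has a genuine gap.
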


Finally, we provide a variant of Theorem \ref{2-thm-1} which will be used below. The basic idea of proof is to apply H\"older's inequality to the diffusion and the drift terms with different exponents.

\begin{theorem}\label{thm-variant}
Let $p_1> 1,\ p_2>1$ and $q=\frac{p_1}{p_1 -1} \vee \frac{p_2}{p_2 -1}$. For each $i\in\{1,2\}$, assume that $\sigma^i\in L^2(0,T; W^{1,2p_1}(\R^d,\mathcal{M}_{d,m}))$ and $b^i\in L^1(0,T; W^{1,p_2}(\R^d,\R^d))$, and $u^i\in L^\infty(0,T; L^1\cap L^q(\R^d))$ is the solution to the corresponding Fokker--Planck equation \eqref{FPE} with $a=\sigma^i\,(\sigma^i)^\ast$ and $b=b^i$. Let $\mu^i_t= u^i_t\, \d x$ and assume that $\mu^i_0\in \mathcal P_{\rm log}(\R^d),\, i=1,2$. Then for all $t\in [0,T]$,
  \begin{equation*}
  \begin{split}
  \tilde\D_\delta(\mu^1_t, \mu^2_t)&\leq \tilde\D_\delta(\mu^1_0, \mu^2_0) + C_1 \bigg(\frac1{\delta^2}\|\sigma^1-\sigma^2\|_{L^2(L^{2p_1})}^2 + \big\| \nabla \sigma^1\big\|_{L^2(L^{2p_1})}^2 \bigg)\\
  &\hskip13pt + C_2 \bigg(\frac1{\delta}\|b^1-b^2\|_{L^1(L^{p_2})} + \big\| \nabla b^1\big\|_{L^1(L^{p_2})} \bigg),
  \end{split}
  \end{equation*}
where the positive constant $C_k$ depends on $\sum_{i=1}^2 \|u^i\|_{L^\infty(L^{p'_k})}$ with $p'_k=\frac{p_k}{p_k -1},\ k=1,2$.
\end{theorem}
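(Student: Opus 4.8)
The plan is to mimic the proof of Theorem \ref{2-thm-1} verbatim, only modifying the step where H\"older's inequality is applied to split the density factor $u^i$ from the terms involving $\sigma$ and $b$. Recall that the argument behind Theorem \ref{2-thm-1} runs as follows: by Trevisan's superposition principle, each solution $\mu^i_t$ is the time-marginal of a measure $\eta^i$ on path space concentrated on solutions of the martingale problem for the operator associated to $(\sigma^i,b^i)$; one couples these two martingale solutions (say $X^1$ and $X^2$) through the same Brownian motion and the optimal coupling of the initial data, applies It\^o's formula to $\log\big(\frac{|X^1_t-X^2_t|^2}{\delta^2}+1\big)$, and uses the pointwise characterization of Sobolev functions together with the maximal-function inequality \eqref{2-lem-maximal-funct.2} to control the "good" terms (those with $\nabla\sigma^1,\nabla b^1$), while the "bad" terms ($\sigma^1-\sigma^2$, $b^1-b^2$) are estimated directly. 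At the end, each spatial integral is taken against $\mu^2_t$ (or $\mu^1_t$), and one invokes H\"older with exponents $p$ and $q$ to pull out $\|u^i\|_{L^\infty(L^q)}$.

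First I would reproduce the coupling and the It\^o expansion exactly as in Section 4.1, arriving at the differential inequality for $\mathbb{E}\log\big(\frac{|X^1_t-X^2_t|^2}{\delta^2}+1\big)$ whose right-hand side is a sum of four types of spatial integrals: $\frac1\delta\int|b^1_t-b^2_t|\,\d\mu^2_t$, $\frac1{\delta^2}\int\|\sigma^1_t-\sigma^2_t\|^2\,\d\mu^2_t$, $\int M|\nabla b^1_t|\,\d(\mu^1_t+\mu^2_t)$, and $\int (M|\nabla\sigma^1_t|)^2\,\d(\mu^1_t+\mu^2_t)$ (here $M$ denotes the relevant maximal operator, and I am being schematic about constants). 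The only change is the choice of H\"older exponents: for the two $\sigma$-terms I apply H\"older with the pair $(p_1,p_1')$, so that $\int\|\sigma^1_t-\sigma^2_t\|^2\,\d\mu^2_t\le \|u^2_t\|_{L^{p_1'}}\,\|\sigma^1_t-\sigma^2_t\|_{L^{2p_1}}^2$ and similarly $\int (M|\nabla\sigma^1_t|)^2\,\d\mu^i_t\le \|u^i_t\|_{L^{p_1'}}\,\|M|\nabla\sigma^1_t|\|_{L^{2p_1}}^2\le C_{d,p_1}\|u^i_t\|_{L^{p_1'}}\,\|\nabla\sigma^1_t\|_{L^{2p_1}}^2$ by \eqref{2-lem-maximal-funct.2}; for the two $b$-terms I instead use the pair $(p_2,p_2')$, giving $\int|b^1_t-b^2_t|\,\d\mu^2_t\le\|u^2_t\|_{L^{p_2'}}\,\|b^1_t-b^2_t\|_{L^{p_2}}$ and $\int M|\nabla b^1_t|\,\d\mu^i_t\le C_{d,p_2}\|u^i_t\|_{L^{p_2'}}\,\|\nabla b^1_t\|_{L^{p_2}}$. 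Taking suprema in $t$ and then integrating in $t$ (using $\sigma\in L^2(L^{2p_1})$ and $b\in L^1(L^{p_2})$), Gronwall's inequality — applied exactly as in the proof of Theorem \ref{2-thm-1}, noting the coefficient multiplying $\log(\cdots)$ is integrable because $\nabla b^1\in L^1(L^{p_2})$ — yields the claimed bound, with $C_1$ proportional to $\sum_i\|u^i\|_{L^\infty(L^{p_1'})}$ and $C_2$ proportional to $\sum_i\|u^i\|_{L^\infty(L^{p_2'})}$; since $q=p_1'\vee p_2'$ and $u^i\in L^\infty(L^1\cap L^q)$, interpolation gives $u^i\in L^\infty(L^{p_k'})$ for $k=1,2$, so the constants are finite.

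The only genuine point requiring care — and the one I expect to be the main (mild) obstacle — is bookkeeping the exponents consistently: one must verify that $\mu^i_t=u^i_t\,\d x$ lies in $L^{p_1'}$ and in $L^{p_2'}$ simultaneously, which is exactly why the hypothesis is stated with the single exponent $q=p_1'\vee p_2'$ and the assumption $u^i\in L^\infty(0,T;L^1\cap L^q)$ (so that $L^1\cap L^q\subset L^{p_k'}$ for both $k$). Everything else — the superposition principle, the coupling, the It\^o computation, the Sobolev pointwise estimate, the maximal inequality, and the Gronwall step — is identical to Section 4.1 and needs no modification; I would simply refer to that proof and indicate the two places where the exponent choice differs.
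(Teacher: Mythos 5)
Your proposal is correct and follows essentially the same route as the paper's own proof: the paper likewise reduces Theorem \ref{thm-variant} to the argument for Theorem \ref{2-thm-1}, changing only the H\"older exponents --- the pair $(p_1,p_1')$ for the diffusion terms and $(p_2,p_2')$ for the drift terms --- exactly as you describe, with the hypothesis $u^i\in L^\infty(0,T;L^1\cap L^q)$, $q=p_1'\vee p_2'$, guaranteeing both density norms are finite. One small correction: no Gronwall step occurs (or is needed) here, since the pointwise bound \eqref{2-lem-maximal-funct.1} cancels the factor $\sqrt{|Z_s|^2+\delta^2}$ in the denominator, so all the estimates are purely additive, just as in Section 4.1.
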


\subsection{Non-degenerate equations with singular drifts}

So far we are mainly concerned with Fokker--Planck equations with degenerate diffusion coefficient; for this reason the drift coefficient is usually required to fulfill some weak differentiability. Next we consider the  non-degenerate equation
  \begin{equation}\label{FPE-LPS}
  \partial_t \mu_t +\div(\mu b_t) -\frac12 \Delta \mu_t=0,\quad \mu|_{t=0} =\mu_0,
  \end{equation}
where the drift satisfies only the Ladyzhenskaya--Prodi--Serrin condition, i.e.,
  \begin{equation}\label{LPS}
  b\in L^q(0,T; L^p(\R^d,\R^d))\quad \mbox{with  }p>2,\ q>2\mbox{ such that } \frac dp +\frac2q <1.
  \end{equation}
Recall that Bogachev et al. \cite{Bogachev16} proved quantitative stability estimates to non-degenerate Fokker--Planck equations, but they assume
the drift $b$ to be locally bounded. Under the condition \eqref{LPS}, the It\^o SDE
  \begin{equation}\label{Ito-SDE-1}
   \d X_t= \d B_t+ b_t(X_t) \,\d t,\quad X_0=x,
  \end{equation}
has been studied intensively since the seminal  paper of Krylov and R\"ockner \cite{Krylov}; see also \cite{Zhang11,Flandoli11,Flandoli,LiLuoWany,Zhang16}. It was shown that \eqref{Ito-SDE-1} determines a stochastic flow of H\"older continuous homeomorphisms on  $\R^d$. The basic tool of proof is Zvonkin's transformation which transforms the equation \eqref{Ito-SDE-1} into a new one with regular coefficients. For later use, here we briefly recall the main steps of this method (see \cite[Section 3.1]{Flandoli}).

For $\lambda>0$, the vector-valued backward parabolic equation
  \begin{equation}\label{Zvonkin}
  \partial_t \phi_t+\frac12 \Delta \phi_t + b_t\cdot\nabla \phi_t-\lambda \phi_t = -b_t,\quad \phi_T(x)=0\in \R^d
  \end{equation}
has a unique solution $\phi\in H^q_{2,p}(T) := L^q(0,T; W^{2,p}(\R^d, \R^d)) \cap W^{1,q}(0,T; L^p(\R^d, \R^d))$ such that
  \begin{equation}\label{regularity}
  \|\partial_t \phi\|_{L^q(L^p)}+ \|\phi\|_{L^q(W^{2,p})}\leq C\|b\|_{L^q(L^p)},
  \end{equation}
where $C$ is a positive constant depending on $d,p,q,T,\lambda$ and $\|b\|_{L^q(L^p)}$; moreover, when $\lambda$ is big enough, we have $\sup_{t\in[0,T]} \|\nabla\phi_t\|_{L^\infty}\leq 1/2$. Define
  \begin{equation}\label{Zvonkin.1}
  \psi_t(x)= x+ \phi_t(x),\quad x\in \R^d,
  \end{equation}
then $\psi_t:\R^d\to \R^d$ is a diffeomorphism with bounded first derivatives, uniformly in $t\in[0,T]$, and the same is true for the inverse mappings $\psi_t^{-1}:\R^d\to \R^d$. Now, let $Y_t= \psi_t(X_t),\, 0\leq t\leq T$, which solves SDE
  \begin{equation}\label{Ito-SDE-2}
   \d Y_t= \tilde\sigma_t(Y_t)\,\d B_t+ \tilde b_t(Y_t) \,\d t,
  \end{equation}
where (${\rm Id}$ is the $d\times d$ identity matrix)
  \begin{equation}\label{Ito-SDE-2.1}
  \tilde\sigma_t(y)={\rm Id} + (\nabla \phi_t)\circ \psi_t^{-1}(y),\quad \tilde b_t(y)=\lambda \phi_t\circ \psi_t^{-1}(y),\quad y\in\R^d.
  \end{equation}
The coefficients $\tilde\sigma$ and $\tilde b$ of \eqref{Ito-SDE-2} are much more regular than $b$, which makes it possible to establish  some key estimates on the solution $Y_t$, and then transfer them back to the solution $X_t$ of \eqref{Ito-SDE-1}. We shall use this idea to prove the quantitative stability estimates for the solutions of \eqref{FPE-LPS}.

\begin{theorem}\label{thm-LPS}
Let $i\in\{1,2\}$. Suppose that $b^i$ satisfies \eqref{LPS} and $\mu^i$ is the solution of \eqref{FPE-LPS} with $b=b^i$. Assume that $\mu^i_t = u^i_t\,\d x$ with $u^i\in L^\infty(0,T; L^1\cap L^{p/(p-2)}(\R^d))$. Then for all $t\in [0,T]$,
  $$\aligned
  \tilde\D_\delta(\mu^1_t, \mu^2_t)&\leq \tilde\D_{\delta/9}(\mu^1_0, \mu^2_0) +C_1 \bigg(\frac1{\delta^2}\|b^1- b^2\|_{L^q(L^p)}^2 + \| b^1\|_{L^q(L^p)}^2 \bigg)\\
  &\hskip13pt + C_2 \bigg(\frac1{\delta}\|b^1- b^2\|_{L^q(L^p)} + \| b^1\|_{L^q(L^p)} \bigg), \endaligned$$
where $C_1$ and $C_2$ are some positive constants.
\end{theorem}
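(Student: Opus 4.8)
The plan is to reduce equation \eqref{FPE-LPS} with a drift satisfying \eqref{LPS} to a Fokker--Planck equation with Sobolev (indeed bounded and non-degenerate) coefficients through Zvonkin's transformation, and then to invoke Theorem~\ref{thm-variant}. Fix $\lambda>0$ so large that the solutions $\phi^i$ of \eqref{Zvonkin} with drift $b=b^i$ satisfy $\sup_{t\in[0,T]}\|\nabla\phi^i_t\|_{L^\infty}\le\frac14$ for both $i=1,2$ (as in the discussion after \eqref{regularity}), and set $\psi^i_t={\rm Id}+\phi^i_t$ as in \eqref{Zvonkin.1}. Then each $\psi^i_t$ and its inverse are globally Lipschitz with universal constants, and their Jacobian determinants are bounded above and below by positive constants depending only on $d$; in particular, pushing an $L^r$ density or a $W^{k,p}$ function through $\psi^i_t$ or $(\psi^i_t)^{-1}$ changes the corresponding norm only by a $d$-dependent factor.

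Since \eqref{LPS} together with $u^i\in L^\infty(0,T;L^1\cap L^{p/(p-2)}(\R^d))$ implies \eqref{integrability} (by H\"older's inequality and $q>1$), Trevisan's superposition principle recalled in Section~3 represents $\mu^i$ as the time-marginal law of a weak solution $X^i$ of the SDE \eqref{Ito-SDE-1} with $b=b^i$ and $\mbox{law}(X^i_0)=\mu^i_0$. Applying the It\^o--Krylov formula for $H^q_{2,p}(T)$-functions along such solutions (valid under \eqref{LPS}) to the time-dependent map $\psi^i$, the process $Y^i_t:=\psi^i_t(X^i_t)$ solves the SDE \eqref{Ito-SDE-2} with coefficients \eqref{Ito-SDE-2.1}; hence $\nu^i_t:=\mbox{law}(Y^i_t)=(\psi^i_t)_\#\mu^i_t$ is a weak solution of the Fokker--Planck equation with diffusion $\tilde a^i=\tilde\sigma^i(\tilde\sigma^i)^\ast$ and drift $\tilde b^i$, and its density $\tilde u^i_t(y)=u^i_t\big((\psi^i_t)^{-1}(y)\big)\,|\det\nabla(\psi^i_t)^{-1}(y)|$ again lies in $L^\infty(0,T;L^1\cap L^{p/(p-2)}(\R^d))$, with norm controlled by that of $u^i$.

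Next I would apply Theorem~\ref{thm-variant} to $\nu^1,\nu^2$ with $p_1=p/2>1$ and $p_2=p>1$, so that $q=\frac{p_1}{p_1-1}\vee\frac{p_2}{p_2-1}=\frac p{p-2}$ matches the integrability of $\tilde u^i$; the constant summand ${\rm Id}$ in $\tilde\sigma^i={\rm Id}+(\nabla\phi^i)\circ(\psi^i)^{-1}$ is handled by the relaxed integrability of Remark~\ref{2-rem}(3) and contributes nothing to $\nabla\tilde\sigma^i$. This controls $\tilde\D_\delta(\nu^1_t,\nu^2_t)$ by $\tilde\D_\delta(\nu^1_0,\nu^2_0)$ plus, up to the constants of that theorem, $\frac1{\delta^2}\|\tilde\sigma^1-\tilde\sigma^2\|_{L^2(L^p)}^2+\|\nabla\tilde\sigma^1\|_{L^2(L^p)}^2$ and $\frac1\delta\|\tilde b^1-\tilde b^2\|_{L^1(L^p)}+\|\nabla\tilde b^1\|_{L^1(L^p)}$. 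To bound the transformed coefficients, observe that $\phi^1-\phi^2$ solves \eqref{Zvonkin} with drift $b^1$ and source $-(b^1-b^2)-(b^1-b^2)\cdot\nabla\phi^2$ of $L^q(L^p)$-norm at most $\frac54\|b^1-b^2\|_{L^q(L^p)}$, so \eqref{regularity} and the embedding after it give $\sup_t\|\phi^1_t-\phi^2_t\|_{L^\infty}+\|\phi^1-\phi^2\|_{H^q_{2,p}(T)}\le C\|b^1-b^2\|_{L^q(L^p)}$; writing each composition difference as $(f-g)\circ(\psi^1)^{-1}+\big(g\circ(\psi^1)^{-1}-g\circ(\psi^2)^{-1}\big)$, estimating the second term by the mean value inequality after the change of variables $y=\psi^2_t(z)$ (under which $(\psi^1_t)^{-1}\circ\psi^2_t$ and the interpolating maps remain bi-Lipschitz with controlled constants), and invoking \eqref{regularity} for $\phi^1,\phi^2$, one gets $\|\tilde b^1-\tilde b^2\|_{L^1(L^p)}\le C\|b^1-b^2\|_{L^q(L^p)}$, $\|\nabla\tilde b^1\|_{L^1(L^p)}\le C\|b^1\|_{L^q(L^p)}$, $\|\nabla\tilde\sigma^1\|_{L^2(L^p)}^2\le C\|b^1\|_{L^q(L^p)}^2$, and $\|\tilde\sigma^1-\tilde\sigma^2\|_{L^2(L^p)}^2\le C\|b^1-b^2\|_{L^q(L^p)}^2\big(1+\|b^2\|_{L^q(L^p)}^2\big)$ --- the last factor being why the constants must be allowed to depend on $\|b^1\|_{L^q(L^p)}$ and $\|b^2\|_{L^q(L^p)}$, besides $d,p,q,T,\lambda$ and the $L^\infty(L^{p/(p-2)})$-norms of $u^1,u^2$, as the statement permits. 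Finally, to pass between $\tilde\D_\delta(\mu^1_t,\mu^2_t)$ and $\tilde\D_\delta(\nu^1_t,\nu^2_t)$ (and likewise at $t=0$), push an optimal coupling through $\big((\psi^1_t)^{-1},(\psi^2_t)^{-1}\big)$ (resp. through $(\psi^1_0,\psi^2_0)$) and use $|(\psi^1_t)^{-1}(x)-(\psi^2_t)^{-1}(y)|\le C\big(|x-y|+\|\phi^1_t-\phi^2_t\|_{L^\infty}\big)$ together with $\log(1+a+b)\le\log(1+a)+\log(1+b)$ for $a,b\ge0$: the Lipschitz constants produce the rescaling of $\delta$ that yields the term $\tilde\D_{\delta/9}(\mu^1_0,\mu^2_0)$, while the summands $\log\big(1+C\|b^1-b^2\|_{L^q(L^p)}^2/\delta^2\big)\le C\|b^1-b^2\|_{L^q(L^p)}^2/\delta^2$ coming from $\|\phi^1_t-\phi^2_t\|_{L^\infty}$ and $\|\phi^1_0-\phi^2_0\|_{L^\infty}$ are absorbed into the $\frac1{\delta^2}\|b^1-b^2\|_{L^q(L^p)}^2$ term. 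Collecting the pieces yields the asserted inequality.

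\emph{Main obstacle.} The conceptual crux is the compatibility of Trevisan's superposition principle with Zvonkin's transformation: justifying rigorously that $Y^i=\psi^i(X^i)$ solves \eqref{Ito-SDE-2} (so that $\nu^i$ is a genuine weak solution of the transformed Fokker--Planck equation in the class demanded by Theorem~\ref{thm-variant}) rests on an It\^o--Krylov formula for $H^q_{2,p}(T)$-functions along solutions of \eqref{Ito-SDE-1} under \eqref{LPS}, and on checking the attendant integrability and density conditions. The technical labour lies in the bookkeeping of the composition-difference norms, where the cross term $\|\phi^1-\phi^2\|_{L^\infty}\,\|\nabla^2\phi^2\|_{L^p}$ both forces the dependence of the constants on $\|b^i\|_{L^q(L^p)}$ and dictates that one power of $\|b^1-b^2\|_{L^q(L^p)}$ enters with $1/\delta$ and at most two with $1/\delta^2$.
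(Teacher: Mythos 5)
Your proposal is correct and follows essentially the same route as the paper: Zvonkin's transformation with $\sup_t\|\nabla\phi^i_t\|_{L^\infty}$ small, the pushforward relation $\nu^i_t=(\psi^i_t)_\#\mu^i_t$ (the paper's Lemma \ref{lem-LPS}), an application of Theorem \ref{thm-variant} with $p_1=p/2$, $p_2=p$ and Remark \ref{2-rem}(3) for the identity part of $\tilde\sigma^i$, comparison estimates for the transformed coefficients (Lemma \ref{lem-LPS-0}, Proposition \ref{prop-LPS}, \eqref{thm-LPS.1}), and a transfer of $\tilde\D_\delta$ between $\mu$ and $\nu$ costing a factor $3$ in $\delta$ at each endpoint, which is exactly how the $\delta/9$ arises. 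The only deviations are technical and harmless: the paper handles the composition difference for $\nabla\phi$ via the maximal-function inequality \eqref{2-lem-maximal-funct.1} rather than your mean-value/interpolating-maps argument, and in the transfer step (Proposition \ref{prop-LPS-1}) it bounds the error by $\frac C\delta\|b^1-b^2\|_{L^q(L^p)}\|u^2\|_{L^\infty(L^{p'})}$ using H\"older against $\mu^2_t$, whereas you absorb $\frac C{\delta^2}\|b^1-b^2\|_{L^q(L^p)}^2$ via the $L^\infty$ bound on $\phi^1-\phi^2$; both fit the stated estimate.
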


\begin{remark}
(1) Formally, the equation \eqref{FPE-LPS} is equivalent to
  $$\partial_t \mu_t =\frac12 \Delta \mu_t- b_t\cdot\nabla \mu_t -\div(b_t)\mu_t,\quad \mu|_{t=0} =\mu_0.$$
If the distributional divergence $\div(b)$ of $b$ exists and belongs to, say, $L^\infty(0,T; L^\infty(\R^d))$, then one can establish an estimate of type  \eqref{regularity}, which gives us a stability estimate in the space $H^q_{2,p}(T)$. However, this method requires some regularity conditions on $\div(b)$.

(2) Using the regularity results in \cite[Section 3]{Zhang16} on the solutions of parabolic equations, we can also consider the equation \eqref{FPE-LPS} with general non-degenerate diffusion coefficients, but we have to restrict to a small time interval (see \cite[Lemma 6.1]{Zhang16}). In order to obtain estimates on any given time interval $[0,T]$, we should add the term $-\lambda u$ to the equation considered in \cite[Section 3]{Zhang16}, and repeat the arguments therein to establish regularity results on solutions for $\lambda$ big enough. In this work we do not want to enter into such details.
\end{remark}

Finally, inspired by \cite[Theorem 1.1(E)]{Zhang16}, we consider the classical Kantorovich--Wasserstein distance $W_2$ and prove

\begin{theorem}\label{thm-LPS-Wass}
Let $\alpha\in (2,p\wedge q)$.  For each $i=1,2$, suppose that $b^i$ satisfies \eqref{LPS} and  $\mu^i_t= u^i_t(x)\,\d x$ is the solution to \eqref{FPE-LPS} satisfying $u^i\in L^\infty \big(0,T; L^1\cap L^{p/(p-\alpha)}(\R^d)\big)$ and $\mu^i_0$ has finite moment of order $2\alpha-2$. Then there is a constant $C_\alpha>0$ such that for all $t\in [0,T]$,
\begin{eqnarray}\label{W-estimate}
W_2(\mu^1_t, \mu^2_t)\leq C_\alpha\Big[W_\alpha(\mu^1_0, \mu^2_0) + \|u^2\|_{L^\infty(L^{p/(p-\alpha)})}^{1/\alpha} \|b^1-  b^2\|_{L^q (L^p)} \Big].
\end{eqnarray}
\end{theorem}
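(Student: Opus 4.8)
The plan is to combine the superposition principle with Zvonkin's transformation \eqref{Zvonkin}--\eqref{Ito-SDE-2.1} and then run an It\^o estimate on the $\alpha$-th power of the distance between two coupled diffusions. Since $b^i$ satisfies \eqref{LPS}, the martingale problem for \eqref{Ito-SDE-1} with $b=b^i$ is well posed, so Trevisan's superposition principle identifies $\mu^i_t$ with the law of the unique solution $X^i$ of \eqref{Ito-SDE-1}; I realise $X^1,X^2$ on one probability space driven by the same Brownian motion $B$, with $(X^1_0,X^2_0)$ an optimal coupling for $W_\alpha(\mu^1_0,\mu^2_0)$. Crucially, I would apply Zvonkin's transformation only to $b^1$: for $\lambda$ large, solve \eqref{Zvonkin} with $b=b^1$, getting $\phi^1$ with $\sup_t\|\nabla\phi^1_t\|_\infty\le\tfrac12$ and, by \eqref{regularity}, $\|\phi^1\|_{L^q(W^{2,p})}\le C\|b^1\|_{L^q(L^p)}$; set $\psi^1_t={\rm Id}+\phi^1_t$ (bi-Lipschitz uniformly in $t$) and transform \emph{both} processes by the same map, $Y^1_t:=\psi^1_t(X^1_t)$ and $\bar Y^2_t:=\psi^1_t(X^2_t)$. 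By It\^o's formula and \eqref{Zvonkin}, $Y^1$ solves \eqref{Ito-SDE-2}--\eqref{Ito-SDE-2.1}, while $\bar Y^2$ solves the \emph{same} SDE with an additional drift $-h_t(\bar Y^2_t)$, where $h_t(y)=({\rm Id}+\nabla\phi^1_t)\big((\psi^1_t)^{-1}(y)\big)\,(b^1_t-b^2_t)\big((\psi^1_t)^{-1}(y)\big)$; in particular $|h_t(\bar Y^2_t)|\le\tfrac32\,|(b^1_t-b^2_t)(X^2_t)|$. This device replaces the (irregular, only H\"older) \emph{difference} of the two diffusion coefficients by a single drift error evaluated along $X^2$, and is what lets everything route cleanly through the density $u^2$.

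Next I would apply It\^o's formula to $R_t:=|Y^1_t-\bar Y^2_t|^\alpha$ (valid for $\alpha\ge2$). The drift splits into two groups. First, the Lipschitz part $\langle Y^1_t-\bar Y^2_t,\tilde b^1_t(Y^1_t)-\tilde b^1_t(\bar Y^2_t)\rangle\le\lambda|Y^1_t-\bar Y^2_t|^2$ and the It\^o correction $\|\tilde\sigma^1_t(Y^1_t)-\tilde\sigma^1_t(\bar Y^2_t)\|^2\le C_d|Y^1_t-\bar Y^2_t|^2B_t$, where $B_t:=\big(M\|\nabla^2\phi^1_t\|(X^1_t)+M\|\nabla^2\phi^1_t\|(X^2_t)\big)^2$ arises from the pointwise maximal-function bound \eqref{2-lem-maximal-funct.1} for $\nabla\phi^1_t$ together with $|X^1_t-X^2_t|\le2|Y^1_t-\bar Y^2_t|$; these contribute $C_\alpha R_t(1+B_t)\,\d t$. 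Second, the error drift $\langle Y^1_t-\bar Y^2_t,-h_t(\bar Y^2_t)\rangle$, which after Young is $\le\tfrac{\alpha-1}{\alpha}R_t+\tfrac1\alpha|h_t(\bar Y^2_t)|^\alpha$, and, by H\"older against the density (using $\alpha<p$) and then H\"older in time (using $\alpha<q$), satisfies $\E\int_0^T|h_s(\bar Y^2_s)|^\alpha\,\d s\le C\int_0^T\!\!\int_{\R^d}|b^1_s-b^2_s|^\alpha u^2_s\,\d s\le C\,\|u^2\|_{L^\infty(L^{p/(p-\alpha)})}\|b^1-b^2\|_{L^q(L^p)}^\alpha$. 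Altogether $\d R_t\le R_t\,\d A_t+\mathcal{H}_t\,\d t+\d\mathcal{M}_t$, with $A_t:=C_\alpha\int_0^t(1+B_s)\,\d s$, $\mathcal{M}$ a local martingale, and $\E\int_0^T\mathcal{H}_s\,\d s\le C\,\|u^2\|_{L^\infty(L^{p/(p-\alpha)})}\|b^1-b^2\|_{L^q(L^p)}^\alpha$.

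The final step is a stochastic Gronwall argument whose engine is the exponential integrability of $A_T$: since $(M\|\nabla^2\phi^1\|)^2\in L^{q/2}(L^{p/2})$ and \eqref{LPS} is precisely the inequality $\tfrac{2d}{p}+\tfrac4q<2$ that makes Krylov's estimate applicable to solutions of \eqref{Ito-SDE-1} (hence along both $X^1$ and $X^2$, whose drifts satisfy \eqref{LPS}), Khasminskii's lemma gives $\E\exp(\mu A_T)<\infty$ for every $\mu>0$. Feeding $\d R_t\le R_t\,\d A_t+\mathcal{H}_t\,\d t+\d\mathcal{M}_t$ into the stochastic Gronwall lemma and applying H\"older yields, for every $\gamma\in(0,1)$, $\E\big[\sup_{s\le t}R_s^\gamma\big]\le C_\gamma\big(\E R_0+\E\int_0^t\mathcal{H}_s\,\d s\big)^\gamma$; here the hypothesis that $\mu^i_0$ has a finite moment of order $2\alpha-2$ is used, through propagation of moments for \eqref{Ito-SDE-1} (again via Krylov--Khasminskii), to supply the integrability of $|X^i_t|$, hence of $|Y^1_t-\bar Y^2_t|$, needed to legitimise the It\^o expansion and the localisation of $\mathcal{M}$. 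Taking $\gamma=2/\alpha\in(0,1)$ (possible precisely because $\alpha>2$), using $\E R_0=\E|Y^1_0-\bar Y^2_0|^\alpha\le(\tfrac32)^\alpha W_\alpha(\mu^1_0,\mu^2_0)^\alpha$ and $|X^1_t-X^2_t|\le2|Y^1_t-\bar Y^2_t|$, one obtains $W_2(\mu^1_t,\mu^2_t)^2\le\E|X^1_t-X^2_t|^2\le C\big(W_\alpha(\mu^1_0,\mu^2_0)^\alpha+\|u^2\|_{L^\infty(L^{p/(p-\alpha)})}\|b^1-b^2\|_{L^q(L^p)}^\alpha\big)^{2/\alpha}$, and \eqref{W-estimate} follows by a square root and the subadditivity of $x\mapsto x^{1/\alpha}$.

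The main obstacle is the non-Lipschitz regularity of the transformed diffusion $\tilde\sigma^1$: it is what forces the random multiplier $B_t$ into the Gronwall inequality, hence makes the Krylov--Khasminskii exponential integrability and a \emph{stochastic} (rather than deterministic) Gronwall lemma indispensable, and it is the reason the argument can only be closed on the $\alpha$-th power of the distance — so that $\alpha>2$, the moment of order $2\alpha-2$, and $\alpha<p\wedge q$ all enter. The supporting idea that removes the genuinely hard part — directly comparing two irregular diffusion coefficients — is to transform both trajectories by the single map $\psi^1$, converting the entire discrepancy into a drift error along $X^2$; verifying that this transformed drift has the claimed form and size, and checking that all the H\"older exponents close, is the remaining (routine but careful) work.
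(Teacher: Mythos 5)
Your proposal is correct, but it is organized differently from the paper's argument, and the difference is worth recording. The paper transforms \emph{each} process by \emph{its own} Zvonkin map, $Y^i_t=\psi^i_t(X^i_t)$, compares the two regularized SDEs with coefficients $(\tilde\sigma^i,\tilde b^i)$, and therefore needs two extra ingredients: Proposition \ref{prop-LPS}, which bounds $\|\tilde b^1-\tilde b^2\|$ and $\|\tilde\sigma^1-\tilde\sigma^2\|$ by $\|b^1-b^2\|_{L^q(L^p)}$ (via Lemma \ref{lem-LPS-0} and Krylov's interpolation estimates), and a $W_\alpha$-analogue of Proposition \ref{prop-LPS-1} to transfer the Wasserstein bound between $\mu_t$ and $\nu_t$; the core comparison (Proposition \ref{prop-LPS-Wass}) is then closed by weighting $|Z_t|^\alpha$ with $e^{-\hat A_t}$, where the exponential integrability of the Fedrizzi--Flandoli functional $A_t$ is quoted from \cite[Lemma 7]{Flandoli11}, and by a H\"older step with exponents $\alpha/2$, $\alpha/(\alpha-2)$. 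You instead transform \emph{both} processes by the single map $\psi^1_t$, so that $Y^1_t=\psi^1_t(X^1_t)$ and $\bar Y^2_t=\psi^1_t(X^2_t)$ share the same regular coefficients $(\tilde\sigma^1,\tilde b^1)$ and the entire discrepancy $b^1-b^2$ appears only as the drift error $h_t$ evaluated along $X^2$ (your computation of $h$ and the bound $|h_t(\bar Y^2_t)|\le\tfrac32|(b^1_t-b^2_t)(X^2_t)|$ are correct); this removes the need for Proposition \ref{prop-LPS} and for any time-$t$ transfer between $\mu$ and $\nu$ (only the trivial Lipschitz comparison at $t=0$ is used), and the error term routes through $u^2$ exactly as required, with the H\"older exponents $p/\alpha$, $p/(p-\alpha)$ in space and $q/\alpha$ in time closing because $\alpha<p\wedge q$. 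The price you pay is that you must justify the It\^o--Krylov formula for $\phi^1\in H^q_{2,p}(T)$ along $X^2$ (whose drift is $b^2$, not $b^1$) and establish $\E\exp\big(k\int_0^T(M|\nabla^2\phi^1_s|(X^i_s))^2\,\d s\big)<\infty$ via Krylov's estimate (applicable since $(M|\nabla^2\phi^1|)^2\in L^{q/2}(L^{p/2})$ and $\tfrac{2d}{p}+\tfrac{4}{q}<2$) and Khasminskii's lemma; both steps are standard in the Krylov--R\"ockner/Fedrizzi--Flandoli/Zhang literature and play the role that the citation of \cite[Lemma 7]{Flandoli11} plays in the paper. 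Your closing step invokes a stochastic Gronwall lemma (Scheutzow-type), which is not among the paper's tools and should be cited; alternatively, with the exponential moments of $A_T$ in hand you could reproduce verbatim the paper's hand-made version of that lemma, namely multiply by $e^{-\hat A_t}$, take expectations, and apply H\"older with exponents $\alpha/2$ and $\alpha/(\alpha-2)$ — this also clarifies that the moment hypothesis of order $2\alpha-2$ is what makes the martingale part square-integrable in that variant, whereas in your formulation it mainly guarantees finiteness of the quantities being estimated. Net comparison: your single-transform route is leaner (no $\tilde\sigma^1-\tilde\sigma^2$ estimate, no Wasserstein transfer at time $t$), while the paper's two-transform route keeps the stochastic analysis minimal by outsourcing the exponential-integrability input and isolating the PDE-level stability of the Zvonkin maps in separate propositions that are reused for Theorem \ref{thm-LPS}.
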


Note that, on the right hand side, we use the $\alpha$-Kantorovich--Wasserstein distance $W_\alpha$ with $\alpha$ bigger than 2. It seems that one cannot replace $W_\alpha$ by $W_2$ (cf. \cite[Lemma 4.2]{Zhang13}).

\begin{remark}
We consider the special case that $\mu^1_0=\mu^2_0$.

(1) The assumptions of Theorem \ref{thm-LPS-Wass} are stronger than those in Theorem \ref{thm-LPS}. Using the simple inequalities
  $$\tilde \D_\delta\leq 2\D_\delta\leq \frac2\delta W_1\leq \frac2\delta W_2,$$
we see that \eqref{W-estimate} implies the result of Theorem \ref{thm-LPS}.

(2) Recall that Bogachev et al. \cite[Corollary 1.2]{Bogachev16} proved quantitative stability estimates on the total variation distance between solutions to Fokker--Planck equations. Our estimate \eqref{W-estimate} is similar to theirs when the diffusion coefficient is the identity matrix and $\varphi=1$.
\end{remark}

\section{Preparations}

In this section, we make some preparations which are important for the proofs of the main results. We need the following basic results in harmonic analysis. For $f\in L^1_{loc}(\R^d)$, denote by $M f(x)$ the maximal function of $f$, i.e.,
  $$M f(x):=\sup_{r>0}\frac1{|B_r|}\int_{B_r} |f(x+y)|\,\d y,$$
where $B_r$ is the ball centered at the origin with radius $r$, and $|B_r|$ is its Lebesgue measure.

\begin{lemma}\label{2-lem-maximal-funct}
There is a dimensional constant $C_d>0$ such that, for any $f\in W^{1,1}_{loc}(\R^d)$,
  \begin{equation}\label{2-lem-maximal-funct.1}
  |f(x)-f(y)|\leq C_d|x-y|\big(M|\nabla f|(x) + M|\nabla f|(y)\big),\quad \mbox{for all } x,y\in L(f),
  \end{equation}
where $L(f)\subset \R^d$ is the set of Lebesgue points of $f$. Moreover, for any $p>1$, there is a constant $C_{d,p}>0$ such that
  \begin{equation}\label{2-lem-maximal-funct.2}
  \int_{\R^d} \big(M f(x)\big)^p\,\d x\leq C_{d,p} \int_{\R^d} |f(x)|^p\,\d x, \quad\mbox{for every } f\in L^p(\R^d).
  \end{equation}
\end{lemma}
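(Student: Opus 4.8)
The statement records two classical facts of real and harmonic analysis, so the plan is simply to recall the standard arguments. For a ball $B\subset\R^d$ write $\langle f\rangle_B:=\frac1{|B|}\int_B f(z)\,\d z$.

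For the pointwise inequality \eqref{2-lem-maximal-funct.1}, the first step I would carry out is the one-sided averaged bound
\[
\bigl|f(x)-\langle f\rangle_{B_r(x)}\bigr|\leq C_d\,r\,M|\nabla f|(x),\qquad x\in L(f),\ r>0,
\]
where $B_r(x)$ is the ball of radius $r$ about $x$. To obtain it I would telescope along dyadic radii: since $x$ is a Lebesgue point, $\langle f\rangle_{B_\rho(x)}\to f(x)$ as $\rho\downarrow0$, hence $f(x)-\langle f\rangle_{B_r(x)}=\sum_{k\geq0}\bigl(\langle f\rangle_{B_{2^{-k-1}r}(x)}-\langle f\rangle_{B_{2^{-k}r}(x)}\bigr)$, and each consecutive difference is estimated by the $L^1$--Poincar\'e inequality on $B_\rho(x)$,
\[
\bigl|\langle f\rangle_{B_{\rho/2}(x)}-\langle f\rangle_{B_\rho(x)}\bigr|\leq\frac{2^d}{|B_\rho|}\int_{B_\rho(x)}\bigl|f-\langle f\rangle_{B_\rho(x)}\bigr|\,\d z\leq C_d\,\rho\,\langle|\nabla f|\rangle_{B_\rho(x)}\leq C_d\,\rho\,M|\nabla f|(x);
\]
summing the geometric series in $2^{-k}r$ gives the displayed bound. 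The second step is to pass from averages to two points: for $x,y\in L(f)$ I would set $r=|x-y|$ and split
\[
|f(x)-f(y)|\leq\bigl|f(x)-\langle f\rangle_{B_r(x)}\bigr|+\bigl|\langle f\rangle_{B_r(x)}-\langle f\rangle_{B_r(y)}\bigr|+\bigl|\langle f\rangle_{B_r(y)}-f(y)\bigr|,
\]
bounding the two outer terms by the one-sided estimate. For the middle term I would compare both base balls with $B:=B_{2r}(x)\supset B_r(x)\cup B_r(y)$; one more use of the Poincar\'e inequality on $B$ bounds $\bigl|\langle f\rangle_{B_r(x)}-\langle f\rangle_{B_r(y)}\bigr|$ by a dimensional multiple of $r\,\langle|\nabla f|\rangle_B$, and $\langle|\nabla f|\rangle_B\leq M|\nabla f|(x)$ as well as, since $B\subset B_{3r}(y)$, $\langle|\nabla f|\rangle_B\leq(3/2)^d\,M|\nabla f|(y)$. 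Combining these yields \eqref{2-lem-maximal-funct.1}; this is essentially the pointwise characterization of Sobolev functions recorded in \cite{CrippaDeLellis}.

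For the maximal inequality \eqref{2-lem-maximal-funct.2} the plan is the usual interpolation route. First I would establish the weak-type $(1,1)$ bound $|\{Mf>\lambda\}|\leq\frac{C_d}{\lambda}\|f\|_{L^1}$ via the Vitali covering lemma: each point of the open set $\{Mf>\lambda\}$ lies in a ball on which the average of $|f|$ exceeds $\lambda$, and extracting a countable pairwise disjoint subfamily that captures a fixed dimensional fraction of the union of all such balls controls the measure. Since trivially $\|Mf\|_{L^\infty}\leq\|f\|_{L^\infty}$, the Marcinkiewicz interpolation theorem upgrades this to strong type $(p,p)$ for every $p\in(1,\infty)$; alternatively one argues directly, writing $f=f\ch_{\{|f|>\lambda/2\}}+f\ch_{\{|f|\leq\lambda/2\}}$ so that $\{Mf>\lambda\}\subset\{M(f\ch_{\{|f|>\lambda/2\}})>\lambda/2\}$, and integrating the layer-cake identity $\int_{\R^d}(Mf)^p\,\d x=p\int_0^\infty\lambda^{p-1}|\{Mf>\lambda\}|\,\d\lambda$ against the weak bound. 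The only genuinely delicate point in the whole lemma is the first step of part \eqref{2-lem-maximal-funct.1} — justifying the dyadic telescoping, which forces one to restrict to Lebesgue points and to keep the radii of the two base balls comparable when comparing $B_r(x)$ and $B_r(y)$; everything else is routine, and one may simply cite a standard reference such as \cite[Appendix]{CrippaDeLellis} in place of reproducing these arguments.
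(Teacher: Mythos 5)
Your proposal is correct: the dyadic telescoping plus Poincar\'e argument at Lebesgue points gives \eqref{2-lem-maximal-funct.1}, and the Vitali covering/weak-$(1,1)$ bound followed by Marcinkiewicz interpolation (or the layer-cake argument with the truncation $f\ch_{\{|f|>\lambda/2\}}$) gives \eqref{2-lem-maximal-funct.2}. The paper itself supplies no proof of this lemma --- it is invoked as a classical fact (the pointwise characterization of Sobolev functions as in \cite{CrippaDeLellis} and \cite[Appendix]{FangLuoThalmaier}, and the Hardy--Littlewood maximal theorem) --- and your argument is exactly the standard proof behind those references, so there is nothing to reconcile.
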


Note that if $L(f)=\emptyset$ (for instance, when $f$ is continuous), then \eqref{2-lem-maximal-funct.1} holds for all $x,y\in\R^d$ (cf. \cite[Appendix]{FangLuoThalmaier}). Let $\mathbb W_T^d= C([0,T],\R^d)$ be the space of continuous functions from $[0,T]$ to $\R^d$. Let $\F_t^d$ be the canonical filtration generated by the coordinate process $e_s(w)=w_s$, for $w\in \mathbb W_T^d$ and $0\leq s\leq t$, where $e_s: \mathbb W_T^d\to \R^d$ is the evaluation map. The next technical result will play an important role in the proof of the main result.

\begin{lemma}\label{2-lem}
Let $\eta^1$ and $\eta^2$ be two probability measures on the path space $\W^d_T$. Denote by $\mu^i_0=(e_0)_\# \eta^i,\, i=1,2$. Then for any $\pi\in \mathcal C(\mu^1_0,\mu^2_0)$, there exists a probability space  $(\Omega, \mathcal F, \P)$ on which there are defined two stochastic processes $(Y^1_t)_{0\leq t\leq T}$ and  $(Y^2_t)_{0\leq t\leq T}$, such that $\eta^i$ is the distribution of $(Y^i_t)_{0\leq t\leq T}$ on $\W^d_T$, $i=1,2$, and $\pi$ is the joint distribution of  $(Y^1_0, Y^2_0)$.
\end{lemma}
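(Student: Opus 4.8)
The plan is to build the coupling explicitly by a regular-conditional-probability (disintegration) argument. First I would take the canonical realization: on the product path space $\W^d_T \times \W^d_T$ we cannot simply put $\eta^1\otimes\eta^2$ because that would make $(Y^1_0,Y^2_0)$ independent, which is the wrong joint law. Instead, for $i=1,2$ I disintegrate $\eta^i$ with respect to the initial-time evaluation map $e_0:\W^d_T\to\R^d$, writing $\eta^i(\d w) = \eta^i_x(\d w)\,\mu^i_0(\d x)$, where $x\mapsto \eta^i_x$ is a Borel family of probability measures on $\W^d_T$ with $\eta^i_x$ concentrated on $\{w: w_0 = x\}$ for $\mu^i_0$-a.e.\ $x$. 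Such a disintegration exists because $\W^d_T$ is a Polish space and $e_0$ is Borel.

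Next I would assemble the measure on the product. Given $\pi\in\mathcal C(\mu^1_0,\mu^2_0)$, define a probability measure $\Theta$ on $\W^d_T\times\W^d_T$ by
\[
\Theta(\d w^1,\d w^2) = \big(\eta^1_{x}\otimes\eta^2_{y}\big)(\d w^1,\d w^2)\,\d\pi(x,y).
\]
That is, one samples $(x,y)\sim\pi$ and then, conditionally and independently, $w^1\sim\eta^1_x$ and $w^2\sim\eta^2_y$. I would verify that $\Theta$ is a well-defined Borel probability measure (measurability of $(x,y)\mapsto \eta^1_x\otimes\eta^2_y$ acting on bounded measurable functions follows from the measurability of each family and a monotone-class argument). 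Then I would check the three required properties directly from the construction: (i) the first marginal of $\Theta$ is $\int \eta^1_x\,\mu^1_0(\d x) = \eta^1$, since $\pi$ has $\mu^1_0$ as its first marginal and the second factor integrates to $1$; similarly the second marginal is $\eta^2$; (ii) the law of $(e_0(w^1),e_0(w^2))$ under $\Theta$ is $\pi$, because $\eta^i_x$ puts $w^i_0=x$ (resp.\ $=y$) almost surely. Finally, I would set $(\Omega,\mathcal F,\P) = (\W^d_T\times\W^d_T, \mathcal B\otimes\mathcal B, \Theta)$ and define $Y^i_t(w^1,w^2) = w^i_t$; then $Y^i = (Y^i_t)_{0\le t\le T}$ has law $\eta^i$ on $\W^d_T$ by (i), and $(Y^1_0,Y^2_0)$ has law $\pi$ by (ii), which is exactly the assertion.

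The main obstacle is the measurability bookkeeping: one must be careful that the disintegration $x\mapsto\eta^i_x$ can be chosen Borel measurable, that $\{w: w_0=x\}$ carries $\eta^i_x$ for $\mu^i_0$-a.e.\ $x$ (so that the coupling of initial points is genuinely realized), and that the map $(x,y,A)\mapsto (\eta^1_x\otimes\eta^2_y)(A)$ is jointly measurable so that $\Theta$ is a bona fide measure. All of this is standard for Polish spaces (disintegration theorem, product measure measurability), so the proof is essentially a clean application of these tools rather than anything delicate; the only genuine idea is to couple through the \emph{initial} slice via $\pi$ while keeping the full path dynamics $\eta^i$ conditionally intact. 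I would present the disintegration and the definition of $\Theta$ as the core, then dispatch the three verifications in a few lines each.
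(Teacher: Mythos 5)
Your proposal is correct and coincides with the paper's own argument: the paper likewise disintegrates $\eta^i$ over the initial marginal via regular conditional probabilities $\eta^i_x$, defines the measure $\d\P(w^1,w^2)=\d\eta^1_x(w^1)\,\d\eta^2_y(w^2)\,\d\pi(x,y)$ on $\W^d_T\times\W^d_T$, takes the coordinate processes, and checks the marginals and the initial joint law exactly as you do. The extra measurability remarks you include are standard for Polish spaces and add nothing that conflicts with the paper's proof.
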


\begin{proof}
For each $i=1,2$, we disintegrate $\eta^i$ with respect to $\mu^i_0$ as follows:
  $$\d\eta^i(w)= \d\eta^i_x(w)\d\mu^i_0(x),$$
where $\eta^i_x$ is the regular conditional probability on $\W^d_T$ which concentrates on the paths starting from $x$ at time 0. We define $\Omega=\W^d_T\times \W^d_T= \W^{2d}_T$, $\mathcal F=\mathcal B(\W^{2d}_T)$ (the family of Borel subsets of $\W^{2d}_T$), and the probability measure
  $$\d\P(w^1,w^2)=\d\eta^1_x(w^1)\d\eta^2_y(w^2)\d\pi(x,y).$$
Now we can set $Y^i_t(w^1,w^2)=w^i_t\ (t\in [0,T], i=1,2)$ to be the coordinate process as usual. To show that $Y^1=(Y^1_t)_{0\leq t\leq T}$ is distributed as $\eta^1$, letting $A\in \mathcal B(\W^d_T)$, we have
  $$\aligned \P(Y^1\in A)&= \P(\{(w^1,w^2): w^1\in A\})= \int_{A\times \W^d_T} \d\P(w^1,w^2)\\
  &= \int_{\R^d\times \R^d} \d\pi(x,y) \int_{A\times \W^d_T} \d\eta^1_x(w^1)\d\eta^2_y(w^2) = \int_{\R^d\times \R^d} \eta^1_x(A)\, \d\pi(x,y)\\
  &= \int_{\R^d} \eta^1_x(A)\, \d\mu^1_0(x) =\eta^1(A). \endaligned$$
In the same way, we see that $Y^2=(Y^2_t)_{0\leq t\leq T}$ has distribution $\eta^2$. Finally, for any two Borel sets $E,F\in \mathcal B(\R^d)$,
  $$\aligned \P((Y^1_0, Y^2_0)\in E\times F)&= \P(\{(w^1,w^2): w^1_0\in E, w^2_0\in F\})\\
  &= \int_{\R^d\times \R^d} \d\pi(x,y) \int_{\{(w^1,w^2):\, w^1_0\in E,\, w^2_0\in F\}} \d\eta^1_x(w^1)\d\eta^2_y(w^2)\\
  &= \int_{\R^d\times \R^d} \eta^1_x(\{w^1: w^1_0\in E\})\,  \eta^2_y(\{w^2: w^2_0\in F\})\, \d\pi(x,y)\\
  &= \pi(E\times F),    \endaligned$$
which implies that the joint distribution of $(Y^1_0, Y^2_0)$ is  $\pi$.
\end{proof}

\subsection{Martingale solution, weak solution and superposition principle}

We give some further preparations which are mainly taken from \cite[Section 2]{Luo14}; see also the beginning parts of \cite[Sections 1 and 2]{RocknerZhang10}. Recall that $\mathcal P(\R^d)$ is the set of probability measures on $(\R^d,\mathcal B(\R^d))$. To fix the notations, we  state in detail the two well known notions of solutions to \eqref{Ito-SDE}.

\begin{definition}[Martingale solution]
Given $\mu_0\in \mathcal P(\R^d)$, a probability measure $P_{\mu_0}$ on $(\mathbb W_T^d,\F_T^d)$ is called a martingale solution to SDE \eqref{Ito-SDE} with initial distribution $\mu_0$ if $(e_0)_\# P_{\mu_0} =\mu_0$, and for any $\varphi\in C^{1,2}([0,T]\times \R^d)$, $\varphi(t,w_t)-\varphi(0,w_0)-\int_0^t (\partial_s \varphi+\L \varphi)(s, w_s)\,\d s$ is an $(\F_t^d)$-martingale under $P_{\mu_0}$, where $\L$ is the time-dependent infinitesimal generator of \eqref{Ito-SDE}.
\end{definition}

\begin{remark}
Let $P_{\mu_0}$ be a martingale solution to \eqref{Ito-SDE}. Define $\mu_t:= (e_t)_\# P_{\mu_0}\in \mathcal P(\R^d),\, t\in [0,T]$. If $(\mu_t)_{0\leq t\leq T}$ satisfies \eqref{integrability}, then it is a weak solution of \eqref{FPE}. It suffices to verify \eqref{FPE-1}. To see this, we first show that $t\mapsto \int_{\R^d}\varphi_t\,\d \mu_t$ is absolutely continuous. Let $n\in\mathbb{N}$ and $\{(s_k,t_k)\}_{1\leq k\leq n}$ be a family of disjoint subintervals of $[0,T]$. By the definition of martingale solution,
  $$\aligned
  \int_{\mathbb W_T^d} \varphi_{t_k}(w_{t_k})\,\d P_{\mu_0} -\int_{\mathbb W_T^d} \varphi_{s_k}(w_{s_k})\,\d P_{\mu_0} =\int_{s_k}^{t_k}\! \int_{\mathbb W_T^d} (\partial_s \varphi+\L \varphi)(s, w_s)\,\d P_{\mu_0}\d s.
  \endaligned$$
Thus,
  $$\aligned
  \sum_{k=1}^n \bigg|\int_{\R^d}\varphi_{t_k}\,\d \mu_{t_k}- \int_{\R^d}\varphi_{s_k}\,\d \mu_{s_k}\bigg| &\leq \sum_{k=1}^n \int_{s_k}^{t_k}\!\int_{\R^d} \big|(\partial_s \varphi+\L \varphi)(s, x)\big|\,\d\mu_s(x)\d s\\
  &\leq \|\varphi\|_{C^{1,2}} \sum_{k=1}^n \int_{s_k}^{t_k}\!\int_{\R^d} \Big(1+\frac12 \|a_s\|+|b_s| \Big)\,\d\mu_s\d s,
  \endaligned$$
where
  $$\|\varphi\|_{C^{1,2}}=\sup_{(t,x)\in [0,T]\times \R^d}(|\varphi(t,x)|+|\partial_t\varphi(t,x)|+|\nabla\varphi(t,x)|+|\nabla^2\varphi(t,x)|).$$
The integrability condition \eqref{integrability} implies the desired result. Now we can compute the time derivative to get
  $$\frac{\d}{\d t} \int_{\R^d}\varphi_t\,\d \mu_t= \int_{\R^d} (\partial_t \varphi+\L \varphi)(t, x)\,\d\mu_t(x),\quad \mbox{for a.e. } t\in [0,T].$$
The equality \eqref{FPE-1} follows by integrating on $[0,T]$.
\end{remark}

\begin{definition}[Weak solution]\label{sect-2-def-2}
Let $\mu_0\in\mathcal P(\R^d)$. The SDE \eqref{Ito-SDE} is said to have a weak solution with initial law $\mu_0$ if there exist a filtered probability space $(\Omega,\mathcal G,(\mathcal G_t)_{0\leq t\leq T},P)$, on which there are defined a $(\mathcal G_t)$-adapted continuous process $X_t$ taking values in $\R^d$ and an $m$-dimensional standard $(\mathcal G_t)$-Brownian motion $W_t$ such that, $X_0$ is distributed as $\mu_0$ and a.s.,
  \begin{equation}\label{sect-2-def-2.1}
  X_t=X_0+\int_0^t \sigma(X_s)\,\d W_s+\int_0^t b(X_s)\,\d s,\quad \forall\, t\in[0,T].
  \end{equation}
We denote this solution by $\big(\Omega,\mathcal G,(\mathcal G_t)_{0\leq t\leq T},P;X,W\big)$.
\end{definition}

Let $\mu_t:= \mbox{law}(X_t),\, 0\leq t\leq T$. If $(\mu_t)_{0\leq t\leq T}$ satisfies the assumption \eqref{integrability}, then the stochastic integral in \eqref{sect-2-def-2.1} makes sense. In fact, since $\|\sigma_t(x)\|^2=\mbox{Tr}(a_t(x))\leq \|a_t(x)\|$,
  $$\E\int_0^T \|\sigma_t(X_t)\|^2\,\d t =\int_0^T \!\!\int_{\R^d} \|\sigma_t(x)\|^2\,\d\mu_t(x)\d t\leq \int_0^T \!\!\int_{\R^d} \|a_t(x)\| \,\d\mu_t(x)\d t<\infty.$$
Therefore, $t\mapsto \int_0^t \sigma(X_s)\,\d W_s$ is a square integrable martingale.

The assertion below is a special case of \cite[Chap. IV, Proposition 2.1]{Ikeda}.

\begin{proposition}[Existence of martingale solution implies that of weak solution]\label{2-prop-1}
Let $\mu_0\in\mathcal P(\R^d)$ and $P_{\mu_0}$  be a martingale solution of SDE \eqref{Ito-SDE}. Then there exists a weak solution $(\Omega,\mathcal G,(\mathcal G_t)_{0\leq t\leq T},P; X,W)$ to SDE \eqref{Ito-SDE} such that $X_\# P=P_{\mu_0}$.
\end{proposition}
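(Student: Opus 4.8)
The plan is to realise the driving Brownian motion by combining the martingale property built into the notion of a martingale solution with the martingale representation theorem, enlarging the probability space so as to absorb the possible degeneracy of $a=\sigma\sigma^\ast$.

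First I would take the canonical setup: set $\Omega_0=\W^d_T$, $\mathcal G^0=\F^d_T$, $\mathcal G^0_t=\F^d_t$, $P^0=P_{\mu_0}$, and let $X_t=e_t$ be the coordinate process, so that $X_0\sim\mu_0$. Since the defining test class in the notion of a martingale solution is $C^{1,2}([0,T]\times\R^d)$, I may insert $\varphi(t,x)=x^k$ and $\varphi(t,x)=x^k x^l$ directly; this yields that
$$M^k_t:=X^k_t-X^k_0-\int_0^t b^k_s(X_s)\,\d s,\qquad k=1,\dots,d,$$
are continuous $(\mathcal G^0_t)$-local martingales with cross-variations $\langle M^k,M^l\rangle_t=\int_0^t a^{kl}_s(X_s)\,\d s$, and in fact square integrable, since \eqref{integrability} together with $\|\sigma_t\|^2=\tr a_t\le\|a_t\|$ forces $\E\langle M^k\rangle_T<\infty$.

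Next I would apply the representation theorem for continuous martingales with absolutely continuous quadratic variation. Because $a_s(x)$ need not be invertible, this forces an enlargement: put $\Omega=\Omega_0\times\W^m_T$ and let $P$ be the product of $P^0$ with $m$-dimensional Wiener measure, with $\tilde B$ the coordinate process on the second factor -- an $m$-dimensional Brownian motion independent of $\Omega_0$ -- and let $(\mathcal G_t)$ be the usual augmentation of the product filtration. Writing $\sigma^+_s(x)$ for the Moore--Penrose pseudo-inverse of $\sigma_s(x)$, define
$$W_t:=\int_0^t\sigma^+_s(X_s)\,\d M_s+\int_0^t\big({\rm Id}-\sigma^+_s(X_s)\sigma_s(X_s)\big)\,\d\tilde B_s.$$
Using $\langle M^k,M^l\rangle_t=\int_0^t(\sigma\sigma^\ast)^{kl}_s(X_s)\,\d s$ and the identities $\sigma\sigma^+\sigma=\sigma$ and $(\sigma^+\sigma)^\ast=\sigma^+\sigma$, a direct computation gives $\langle W^\alpha,W^\beta\rangle_t=\delta_{\alpha\beta}\,t$, so by Lévy's characterisation $W$ is an $m$-dimensional $(\mathcal G_t)$-Brownian motion; the same algebra gives $\int_0^t\sigma_s(X_s)\,\d W_s=\int_0^t\sigma_s(X_s)\sigma^+_s(X_s)\,\d M_s=M_t$ (the second integral drops out because $\sigma({\rm Id}-\sigma^+\sigma)=0$), which is exactly \eqref{sect-2-def-2.1}, while $X_\#P=P^0=P_{\mu_0}$ by construction.

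The hard part is precisely the degenerate case: if $\sigma$ were uniformly non-degenerate one could simply set $W_t=\int_0^t\sigma_s(X_s)^{-1}\,\d M_s$ on the original space, but in general one must enlarge and then check measurability of $s\mapsto\sigma^+_s(X_s)$, well-posedness of the stochastic integrals (again via \eqref{integrability}), and that the pseudo-inverse construction genuinely yields a Brownian motion recovering $M$. All of this is classical and is contained in \cite[Chap.\ IV, Proposition 2.1]{Ikeda}, so in the paper I would cite that reference rather than carry out the enlargement in detail.
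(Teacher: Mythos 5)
Your proposal is correct and follows essentially the same route as the paper: the paper gives no argument beyond citing \cite[Chap.\ IV, Proposition 2.1]{Ikeda}, and your sketch (extracting the local martingales $M^k$ with $\langle M^k,M^l\rangle_t=\int_0^t a^{kl}_s(X_s)\,\d s$ from the martingale problem, enlarging the space by an independent Brownian motion, and building $W$ via the pseudo-inverse so that $M_t=\int_0^t\sigma_s(X_s)\,\d W_s$ by L\'evy's characterisation) is exactly the standard construction behind that citation, which you also invoke for the details.
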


The next result is similar to Lemma \ref{2-lem}; see also the proof of \cite[Chap. IV, Theorem 1.1]{Ikeda}.

\begin{proposition}\label{2-prop-2}
For each $i\in\{1,2\}$, let $\big(\Omega^i,\mathcal G^i, (\mathcal G_t^i)_{0\leq t\leq T},P^i; X^i,W^i\big)$ be a weak solution to SDE \eqref{Ito-SDE} with corresponding coefficients $\sigma^i$ and $b^i$, having the initial law $\mu^i_0\in\mathcal P(\R^d)$. Then for any $\pi\in \mathcal C(\mu^1_0,\mu^2_0)$, there exist a filtered probability space $(\Omega,\mathcal G,(\mathcal G_t)_{0\leq t\leq T},P)$, a standard $m$-dimensional $(\mathcal G_t)$-Brownian motion $W_t$ and two $\R^d$-valued $(\mathcal G_t)$-adapted continuous processes $Y^1$ and $Y^2$, such that $(Y^1_0, Y^2_0)$ is distributed as  $\pi$, and for each $i\in\{1,2\}$,
\begin{itemize}
\item[\rm(1)] $X^i$ and $Y^i$ have the same distributions in $\mathbb W_T^d$;
\item[\rm(2)] $\big(\Omega,\mathcal G, (\mathcal G_t)_{0\leq t\leq T},P;Y^i,W\big)$ is a weak solution of SDE \eqref{Ito-SDE} with coefficients $\sigma^i$ and $b^i$.
\end{itemize}
\end{proposition}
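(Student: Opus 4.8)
The plan is to imitate the construction used in the proof of Lemma~\ref{2-lem}, but to glue the two solutions along the pair (initial position, driving Brownian path) rather than merely along the initial position, so that a single Brownian motion drives both resulting processes. First I would lift everything to path space. For $i\in\{1,2\}$, the weak solution $(\Omega^i,\mathcal G^i,(\mathcal G^i_t),P^i;X^i,W^i)$ induces the law $\nu^i$ of $(X^i,W^i)$ on $\mathbb W^d_T\times\mathbb W^m_T$. Since $W^i$ is a Brownian motion for $(\mathcal G^i_t)$ and $(X^i,W^i)$ is $(\mathcal G^i_t)$-adapted, $W^i$ is also a Brownian motion for the filtration generated by $(X^i,W^i)$ alone; transferring this under the law, the second coordinate process $\beta$ of $\mathbb W^d_T\times\mathbb W^m_T$ is, under $\nu^i$, a Brownian motion for the canonical filtration $\mathcal F^{\xi,\beta}_t=\sigma(\xi_u,\beta_u:u\leq t)$ (here $\xi$ is the first coordinate process), while $\xi$ satisfies $\xi_t=\xi_0+\int_0^t\sigma^i(\xi_s)\,\d\beta_s+\int_0^t b^i(\xi_s)\,\d s$ for all $t$, $\nu^i$-a.s. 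In particular $\beta$ is $\nu^i$-independent of $\mathcal F^{\xi,\beta}_0$, hence of $\xi_0$, so the $\nu^i$-law of $(\xi_0,\beta)$ equals $\mu^i_0\otimes\mathbb W^m$, where $\mathbb W^m$ is the Wiener measure on $\mathbb W^m_T$. As in the proof of Lemma~\ref{2-lem}, I then disintegrate
$$\d\nu^i(\xi,\beta)=\d\nu^i_{x,\beta}(\xi)\,\d\mu^i_0(x)\,\mathbb W^m(\d\beta),$$
with $\nu^i_{x,\beta}$ a regular conditional probability on $\mathbb W^d_T$ carried by the paths starting at $x$.

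On $\Omega=\mathbb W^d_T\times\mathbb W^d_T\times\mathbb W^m_T$, with coordinate processes $Y^1,Y^2$ (the first two factors) and $W$ (the third), I set
$$\d P(w^1,w^2,\beta)=\d\nu^1_{x,\beta}(w^1)\,\d\nu^2_{y,\beta}(w^2)\,\d\pi(x,y)\,\mathbb W^m(\d\beta),$$
i.e.\ one samples $(x,y)\sim\pi$ and, independently, a Wiener path $\beta$, then draws $w^1,w^2$ conditionally independently from $\nu^1_{x,\beta}$ and $\nu^2_{y,\beta}$; I equip $\Omega$ with the usual augmentation $(\mathcal G_t)$ of the natural filtration of $(Y^1,Y^2,W)$. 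Integrating out $w^j$ $(j\neq i)$ and then the corresponding marginal of $\pi$ shows that $(Y^i,W)$ has law $\nu^i$, which yields item~(1); and since $\nu^i_{x,\beta}$ is concentrated on paths starting at $x$, the law of $(Y^1_0,Y^2_0)$ is $\pi$.

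The crux, and the step I expect to be the main obstacle, is to verify that $W$ remains a Brownian motion with respect to the enlarged filtration $(\mathcal G_t)$, which also sees the past of the ``other'' process. This rests on a causality property of the disintegration: under $\nu^i$, because $\beta$ is a Brownian motion for the filtration generated by both coordinates, the increments $\{\beta_u-\beta_s:u\geq s\}$ are independent of $\sigma(\xi_0,\xi_{[0,s]},\beta_{[0,s]})$, hence, conditionally on $(\xi_0,\beta_{[0,s]})$, the segment $\xi_{[0,s]}$ is independent of those future increments; consequently the push-forward of $\nu^i_{x,\beta}$ under the restriction $w\mapsto w|_{[0,s]}$ depends on $\beta$ only through $\beta_{[0,s]}$. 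Granting this, for bounded $\mathcal G_s$-measurable $\Phi$ and bounded continuous $f$, conditioning first on $(x,y,\beta)$ rewrites $\E^P[f(W_t-W_s)\Phi]$ as $\int f(\beta_t-\beta_s)\,H(x,y,\beta_{[0,s]})\,\d\pi(x,y)\,\mathbb W^m(\d\beta)$ for a suitable bounded $H$, and then the independence of $\beta_t-\beta_s$ from $\beta_{[0,s]}$ under $\mathbb W^m$, together with the independence of $\pi$ from $\beta$, gives $\E^P[f(W_t-W_s)\Phi]=\E[f(W_t-W_s)]\,\E^P[\Phi]$; thus $W$ is a $(\mathcal G_t)$-Brownian motion. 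Finally, since $(Y^i,W)$ has law $\nu^i$ and $W$ stays a $(\mathcal G_t)$-Brownian motion, the stochastic integral $\int_0^{\cdot}\sigma^i(Y^i_s)\,\d W_s$ is insensitive to enlarging the filtration and coincides $P$-a.s.\ with its analogue on $(\mathbb W^d_T\times\mathbb W^m_T,\nu^i)$; hence $Y^i_t=Y^i_0+\int_0^t\sigma^i(Y^i_s)\,\d W_s+\int_0^t b^i(Y^i_s)\,\d s$ holds $P$-a.s., which is item~(2).
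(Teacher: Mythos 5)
Your construction --- lifting each weak solution to its law on $\mathbb W^d_T\times\mathbb W^m_T$, disintegrating with respect to the pair (initial point, Brownian path), gluing with $\pi\otimes(\text{Wiener measure})$ on the product path space, and using the causality/compatibility property (future Brownian increments independent of the joint past) to check that $W$ stays a Brownian motion for the enlarged filtration --- is correct, and it is precisely the argument the paper intends: its ``proof'' of Proposition \ref{2-prop-2} consists of pointing to Lemma \ref{2-lem} and to the proof of Ikeda--Watanabe, Chap.~IV, Theorem~1.1, which is exactly this gluing along the Brownian path, here adapted to two different coefficient pairs and a general coupling $\pi$ of the initial laws.
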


Finally we recall the following superposition principle proved in \cite[Theorem 2.5]{Trevisan} for solutions to the Fokker--Planck equation \eqref{FPE}, which is much more general than the one in \cite[Theorem 2.6]{Figalli} where the coefficients are required to be uniformly bounded.

\begin{proposition}\label{prop-Trevisan}
Given $\mu_0\in\mathcal P(\R^d)$, let $\mu_t\in\mathcal P(\R^d)$ be a measure-valued weak solution to equation \eqref{FPE} with initial value $\mu_0$, that is, it satisfies \eqref{integrability} and \eqref{FPE-1}. Then there exists a martingale solution $P_{\mu_0}$ to SDE \eqref{Ito-SDE} with initial law $\mu_0$ such that, for all $\varphi\in C_c^\infty(\R^d)$, one has
  $$\int_{\R^d} \varphi(x)\,\d\mu_t(x)=\int_{\mathbb W_T^d}\varphi(w_t)\,\d P_{\mu_0}(w),   \quad \forall\, t\in[0,T].$$
\end{proposition}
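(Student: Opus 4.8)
The plan is to realize $P_{\mu_0}$ as a narrow limit of martingale solutions of regularized equations, in the spirit of the Stroock--Varadhan correspondence and of Figalli's argument \cite{Figalli} for bounded coefficients, the new point being to push the construction through under only \eqref{integrability}. Fix a standard mollifier $(\rho_\eta)_{\eta>0}$ on $\R^d$ and set $\mu^\eta_t=\mu_t*\rho_\eta$, a probability density for each $t$. Convolving \eqref{FPE}, the curve $\mu^\eta_t$ is a weak solution of the Fokker--Planck equation with coefficients $a^\eta_{ij}=\big((a_{ij}\mu_t)*\rho_\eta\big)\big/\mu^\eta_t$ and $b^\eta=\big((b\mu_t)*\rho_\eta\big)\big/\mu^\eta_t$ on $\{\mu^\eta_t>0\}$ and with initial datum $\mu^\eta_0$. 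Since these coefficients are still only measurable in $t$ and may be large where the density is small, I would perform a further regularization and truncation in $(t,x)$ so that the resulting coefficients $(a^n,b^n)$ are smooth and bounded with bounded derivatives, the associated densities $\mu^n_t$ still satisfy $\mu^n_0\to\mu_0$ and $\mu^n_t\to\mu_t$ narrowly for every $t$, and --- the crucial point --- $a^n\mu^n\to a\mu$ and $b^n\mu^n\to b\mu$ in $L^1((0,T)\times\R^d)$, so in particular $\int_0^T\!\!\int(\|a^n_t\|+|b^n_t|)\,\d\mu^n_t\,\d t\to\int_0^T\!\!\int(\|a_t\|+|b_t|)\,\d\mu_t\,\d t$. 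For each $n$, classical theory yields a weak solution $X^n$ of the It\^o SDE with coefficients $(\sigma^n,b^n)$, $a^n=\sigma^n(\sigma^n)^\ast$, having $\mathrm{law}(X^n_0)=\mu^n_0$; by It\^o's formula and uniqueness for the regularized Fokker--Planck equation one has $\mathrm{law}(X^n_t)=\mu^n_t$. Let $P^n\in\mathcal P(\W^d_T)$ be the law of $X^n$.

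Next I would prove tightness of $\{P^n\}$ in $\mathcal P(\W^d_T)$. Writing $X^n_t=X^n_0+M^n_t+\int_0^t b^n_s(X^n_s)\,\d s$ with $M^n$ a martingale of quadratic variation $\int_0^\cdot a^n_s(X^n_s)\,\d s$, the martingale part and the bounded-variation part are controlled by $\E\int_0^T\|a^n_s(X^n_s)\|\,\d s=\int_0^T\!\!\int\|a^n_s\|\,\d\mu^n_s\,\d s$ and $\E\int_0^T|b^n_s(X^n_s)|\,\d s=\int_0^T\!\!\int|b^n_s|\,\d\mu^n_s\,\d s$; by Step 1 these converge, hence the integrands $\|a^n_\cdot(X^n_\cdot)\|$ and $|b^n_\cdot(X^n_\cdot)|$ are uniformly integrable on $[0,T]\times\Omega$. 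Combined with tightness of the narrowly convergent sequence $\{\mu^n_0\}$, a tightness criterion of Aldous--Stroock--Varadhan type then gives relative compactness of $\{P^n\}$, and I pass to a narrowly convergent subsequence $P^n\to P_{\mu_0}$.

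It remains to identify $P_{\mu_0}$. Since each $e_t:\W^d_T\to\R^d$ is continuous, $(e_0)_\#P_{\mu_0}=\lim(e_0)_\#P^n=\mu_0$ and $(e_t)_\#P_{\mu_0}=\lim\mu^n_t=\mu_t$ for all $t$, which already yields the displayed identity $\int\varphi\,\d\mu_t=\int\varphi(w_t)\,\d P_{\mu_0}(w)$. For the martingale property, fix $\varphi\in C^{1,2}([0,T]\times\R^d)$, $0\le s<t\le T$, times $t_1<\cdots<t_k\le s$ and $g\in C_b((\R^d)^k)$, and pass to the limit in $\E^{P^n}\!\big[\big(\varphi(t,w_t)-\varphi(s,w_s)-\int_s^t(\partial_u\varphi+\L^n\varphi)(u,w_u)\,\d u\big)g(w_{t_1},\dots,w_{t_k})\big]=0$. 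The path-evaluation terms converge by narrow convergence of $P^n$ and continuity of $\varphi,g$; the only delicate term is $\E^{P^n}\!\int_s^t\L^n\varphi(u,w_u)\,\d u=\frac12\sum_{ij}\int_s^t\!\!\int a^n_{ij}\,\partial_{ij}\varphi\,\d\mu^n_u\,\d u+\sum_i\int_s^t\!\!\int b^n_i\,\partial_i\varphi\,\d\mu^n_u\,\d u$, which converges to $\int_s^t\!\!\int\L\varphi\,\d\mu_u\,\d u=\E^{P_{\mu_0}}\!\int_s^t\L\varphi(u,w_u)\,\d u$ by the $L^1$-convergences $a^n\mu^n\to a\mu$, $b^n\mu^n\to b\mu$ from Step 1 together with the uniform integrability from Step 2. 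Hence $\varphi(\cdot,w_\cdot)-\varphi(0,w_0)-\int_0^\cdot(\partial_u\varphi+\L\varphi)(u,w_u)\,\d u$ is a $P_{\mu_0}$-martingale, so $P_{\mu_0}$ is a martingale solution of \eqref{Ito-SDE} with initial law $\mu_0$, as required.

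The hard part is the interplay between Steps 1 and 2: because \eqref{integrability} gives no uniform bound on the coefficients and no ellipticity, tightness can come only from the uniform integrability of $\|a^n_\cdot(X^n_\cdot)\|$ and $|b^n_\cdot(X^n_\cdot)|$, which in turn forces the regularization to be chosen so that $a^n\mu^n\to a\mu$ and $b^n\mu^n\to b\mu$ in $L^1$ while simultaneously keeping $\mu^n_t\to\mu_t$ narrowly and the coefficients smooth and bounded; arranging all of these at once --- in particular handling the region where the mollified density is small, where the natural quotients degenerate --- is the technical core of the argument, and is precisely where Trevisan's result improves on the uniformly bounded case treated by Figalli.
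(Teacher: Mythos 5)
The paper does not actually prove this proposition: it is quoted from Trevisan [Theorem 2.5], which extends Figalli's superposition principle from uniformly bounded coefficients to the integrability condition \eqref{integrability}, and the authors use it as a black box. Measured against that cited proof, your outline reproduces the standard skeleton (mollify, pass to smooth bounded approximations, prove tightness, identify the limit as a solution of the martingale problem), but it leaves unproved precisely the steps that make the theorem nontrivial. Concretely, the convolution quotients $a^\eta=((a\mu)\ast\rho_\eta)/(\mu\ast\rho_\eta)$ and $b^\eta=((b\mu)\ast\rho_\eta)/(\mu\ast\rho_\eta)$ are in general unbounded (the data are only $\mu$-integrable) and degenerate where the mollified density is small, and the moment you truncate or further regularize them, the curve $\mu_t\ast\rho_\eta$ ceases to be an exact solution of the Fokker--Planck equation with the modified coefficients. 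Your Step 1 therefore requires, simultaneously, smooth bounded $(a^n,b^n)$, an \emph{exact} solution $\mu^n$ of the corresponding equation, narrow convergence $\mu^n_t\to\mu_t$ for every $t$, and $a^n\mu^n\to a\mu$, $b^n\mu^n\to b\mu$ in $L^1$. If you re-solve the truncated equation in order to restore exactness, you lose all control that its solution remains close to $\mu_t$: that is a stability statement of exactly the type the present paper is trying to \emph{derive from} this proposition, so the argument as sketched is circular. Declaring this ``the technical core'' does not supply it; filling that gap is essentially the content of Trevisan's paper.

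There is also a concrete incorrect deduction in your tightness step: from the convergence of $\int_0^T\!\!\int(\|a^n_t\|+|b^n_t|)\,\d\mu^n_t\,\d t$ to the corresponding integral for $(a,b,\mu)$ you conclude that $\|a^n_\cdot(X^n_\cdot)\|$ and $|b^n_\cdot(X^n_\cdot)|$ are uniformly integrable on $[0,T]\times\Omega$. Convergence of integrals of nonnegative quantities does not imply equi-integrability of the integrands. What actually works is to fix, by the de la Vall\'ee--Poussin theorem, a convex superlinear $G$ with $\int_0^T\!\!\int G(\|a_t\|+|b_t|)\,\d\mu_t\,\d t<\infty$ and to transport this bound to the approximations through Jensen's inequality for the convolution quotient, yielding $\sup_n\int_0^T\!\!\int G(\|a^n_t\|+|b^n_t|)\,\d\mu^n_t\,\d t<\infty$; only such a uniform superlinear bound feeds an Aldous-type criterion and survives the passage to the limit in the drift and diffusion terms of the martingale problem. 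As written, both the construction of the approximating triples and the tightness estimate have genuine gaps, so the proposal is a plan rather than a proof of the cited result.
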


Before concluding this section, we make some remarks. Let $(\mu_t)_{t\in [0,T]}$ be a weak solution to the Fokker--Planck equation \eqref{FPE} with initial value $\mu_0$. By Propositions \ref{prop-Trevisan} and \ref{2-prop-1}, there exists a weak solution $(\Omega,\mathcal G,(\mathcal G_t)_{0\leq t\leq T},P; X,W)$ to the SDE \eqref{Ito-SDE}, that is, $\mbox{law}(X_0)=\mu_0$ and \eqref{sect-2-def-2.1} holds for a.s. $\omega\in \Omega$.

\begin{lemma}\label{2-lem-growth}
Let $(\mu_t)_{t\in[0,T]}$ be a solution to the Fokker--Planck equation \eqref{FPE}. We have the following simple estimates:
\begin{itemize}
\item[\rm(1)] if $\int_{\R^d} |x|\,\d\mu_0(x)<+\infty$, then $\sup_{0\leq t\leq T} \int_{\R^d} |x|\,\d\mu_t(x)<+\infty$;
\item[\rm(2)] if $\int_{\R^d} \log(1+|x|^2)\,\d\mu_0(x)<+\infty$, then $\sup_{0\leq t\leq T} \int_{\R^d} \log(1+|x|^2)\,\d\mu_t(x)<+\infty$.
\end{itemize}
\end{lemma}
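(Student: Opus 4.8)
The plan is to transfer the statement to the stochastic differential equation \eqref{Ito-SDE} via the superposition principle and then to run It\^o's formula on a test function growing sublinearly. As recorded in the paragraph preceding the lemma, Propositions \ref{prop-Trevisan} and \ref{2-prop-1} furnish a weak solution $(\Omega,\mathcal G,(\mathcal G_t)_{0\leq t\leq T},P;X,W)$ of \eqref{Ito-SDE} with $\mathrm{law}(X_0)=\mu_0$; since $X_\#P=P_{\mu_0}$ and $(e_t)_\#P_{\mu_0}=\mu_t$, one also has $\mathrm{law}(X_t)=\mu_t$ for every $t\in[0,T]$. The integrability assumption \eqref{integrability} enters twice: it gives $\E\int_0^T|b_s(X_s)|\,\d s=\int_0^T\!\int_{\R^d}|b_s|\,\d\mu_s\d s<\infty$, so that the drift part of \eqref{sect-2-def-2.1} is a.s.\ of bounded variation, and $\E\int_0^T\|\sigma_s(X_s)\|^2\,\d s\le\int_0^T\!\int_{\R^d}\|a_s\|\,\d\mu_s\d s<\infty$, so that the stochastic integral in \eqref{sect-2-def-2.1} is a square-integrable martingale; in particular $X$ is a continuous semimartingale.

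For part (2) I would apply It\^o's formula to $V(x)=\log(1+|x|^2)$, which is smooth with $|\nabla V(x)|\le1$ and $\|\Hess V(x)\|\le C_d$ uniformly on $\R^d$ (elementary computation). This gives
$$V(X_t)=V(X_0)+\int_0^t\nabla V(X_s)\cdot\sigma_s(X_s)\,\d W_s+\int_0^t\Big[\nabla V(X_s)\cdot b_s(X_s)+\tfrac12\tr\big(a_s(X_s)\Hess V(X_s)\big)\Big]\d s.$$
Since $|\nabla V|\le1$ the stochastic integral has quadratic variation at most $\int_0^t\|\sigma_s(X_s)\|^2\,\d s$, hence is a true martingale of zero mean. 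Taking expectations and using $|\nabla V(X_s)\cdot b_s(X_s)|\le|b_s(X_s)|$ and $|\tr(a_s\Hess V)|\le\|a_s\|\,\|\Hess V\|\le C_d\|a_s\|$ (Cauchy--Schwarz for the Hilbert--Schmidt inner product) yields
$$\int_{\R^d}\log(1+|x|^2)\,\d\mu_t=\E V(X_t)\le\int_{\R^d}\log(1+|x|^2)\,\d\mu_0+\int_0^T\!\int_{\R^d}\Big(|b_s|+\tfrac{C_d}{2}\|a_s\|\Big)\,\d\mu_s\d s,$$
and the right-hand side is finite by the hypothesis on $\mu_0$ and by \eqref{integrability}, and independent of $t$; this proves (2) (and, via $2\log(1+s)\le\log2+\log(1+s^2)$, also that $\mu_t\in\mathcal P_{\rm log}(\R^d)$). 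Part (1) is the same with $V$ replaced by $\phi(x)=(1+|x|^2)^{1/2}$, which is again smooth with $|\nabla\phi|\le1$ and $\|\Hess\phi\|\le C_d$; the identical estimate gives $\sup_t\E\phi(X_t)<\infty$, and the conclusion follows from $|x|\le\phi(x)\le1+|x|$.

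Two remarks on the scheme. No Gronwall argument appears: choosing $V,\phi$ with bounded gradient and Hessian makes the It\^o correction terms dominated directly by $|b_s|+\|a_s\|$, which is integrable against $\d\mu_s\d s$ by \eqref{integrability}, with no feedback from $\mu_t$ --- a quadratic function like $|x|^2$, whose gradient is unbounded, would not work under \eqref{integrability} alone. The one point needing care, which I would regard as the main technical obstacle, is the rigorous justification of It\^o's formula and of the martingale property for a weak solution whose coefficients obey only \eqref{integrability}; this is settled by a routine localization --- stop at $\tau_n=\inf\{t:|X_t|\ge n\}$, apply It\^o on $[0,t\wedge\tau_n]$, take expectations to get the bound uniformly in $n$, then send $n\to\infty$ using path continuity ($\tau_n\uparrow\infty$ a.s.) and Fatou's lemma, legitimate since $V,\phi\ge0$.
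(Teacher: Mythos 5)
Your proposal is correct and follows essentially the paper's own route: both pass to the SDE representation via Propositions \ref{prop-Trevisan} and \ref{2-prop-1} and then use \eqref{integrability} to control the drift and the quadratic variation, and for part (2) the paper performs exactly the same It\^o computation for $\log(1+|X_t|^2)$. The only minor differences are in part (1), where the paper estimates $\E\sup_{0\le t\le T}|X_t|$ directly from \eqref{sect-2-def-2.1} via the Burkholder and Cauchy inequalities rather than applying It\^o to the smoothed function $(1+|x|^2)^{1/2}$, and in your localization step, which is harmless but not really needed since $|\nabla V|\le 1$ together with \eqref{integrability} already makes the stochastic integral a square-integrable martingale, so one may take expectations directly.
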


\begin{proof}
(1) Since the solution $(\mu_t)_{0\leq t\leq T}$ of  \eqref{FPE} satisfies \eqref{integrability}, by the remark after Definition \ref{sect-2-def-2}, the stochastic integral in \eqref{sect-2-def-2.1} is a square integrable martingale. Hence, by the Burkholder inequality and Cauchy inequality,
  $$\aligned
  \E\sup_{0\leq t\leq T}|X_t| &\leq \E|X_0| +\E\sup_{t\leq T}\bigg|\int_0^t \sigma_s(X_s)\,\d W_s\bigg| +\E\sup_{t\leq T}\bigg|\int_0^t b_s(X_s)\,\d s\bigg|\\
  &\leq \int_{\R^d} |x|\,\d\mu_0(x) + C\bigg[\E\int_0^T \|\sigma_s(X_s)\|^2\,\d s\bigg]^{1/2} + \int_0^T \E |b_s(X_s)|\,\d s\\
  &= \int_{\R^d} |x|\,\d\mu_0(x) + C\bigg[\int_0^T\!\!\int_{\R^d} \|\sigma_s(x)\|^2\,\d\mu_s(x)\d s\bigg]^{1/2} + \int_0^T\!\!\int_{\R^d} |b_s(x)|\,\d\mu_s(x)\d s,
  \endaligned$$
which is finite by \eqref{integrability} since $\|\sigma_s(x)\|^2\leq \|a_s(x)\|$. Therefore,
  $$\sup_{0\leq t\leq T} \int_{\R^d} |x|\,\d\mu_t(x)=\sup_{0\leq t\leq T} \E|X_t|\leq \E\sup_{0\leq t\leq T}|X_t| <\infty.$$

(2) By the It\^o formula,
  $$\d\log(1+|X_t|^2)=\frac{2\<X_t,\sigma_t(X_t)\,\d W_t\>}{1+|X_t|^2} + \frac{2\<X_t,b_t(X_t)\>+\|\sigma_t(X_t)\|^2}{1+|X_t|^2}\,\d t - \frac{2|\sigma_t(X_t)^\ast X_t|^2}{(1+|X_t|^2)^2}\,\d t.$$
The quadratic variation of the martingale part is
  $$\E\int_0^T \frac{4|\sigma_t(X_t)^\ast X_t|^2}{(1+|X_t|^2)^2}\,\d t\leq \,\E\int_0^T \|\sigma_t(X_t)\|^2\,\d t\leq \int_0^T\!\!\int_{\R^d} \|a_t(x)\|\,\d\mu_t(x)\d t<\infty,$$
and hence, it is a square integrable martingale. Analogous to the above arguments, we have
  $$\aligned
  \E\sup_{0\leq t\leq T} \log(1+|X_t|^2)&\leq \E\log(1+|X_0|^2) +\E\sup_{0\leq t\leq T} \bigg|\int_0^t \frac{2\<X_t,\sigma_t(X_t)\,\d W_t\>}{1+|X_t|^2} \bigg|\\
  &\hskip13pt +\E\sup_{0\leq t\leq T}\bigg|\int_0^t \frac{2\<X_t,b_t(X_t)\>+\|\sigma_t(X_t)\|^2}{1+|X_t|^2}\,\d t\bigg|,
  \endaligned$$
which is dominated by
\begin{eqnarray*}&&\int_{\R^d} \log(1+|x|^2)\,\d\mu_0(x) + \bigg[\int_0^T\!\!\int_{\R^d} \|a_t(x)\|\,\d\mu_t(x)\d t\bigg]^{1/2} \\
&&+ \int_0^T\!\!\int_{\R^d} \big(\|a_t(x)\|+|b_t(x)|\big)\,\d\mu_t(x)\d t.
\end{eqnarray*}
This immediately implies the desired result.
\end{proof}

\section{Proofs of results in Section 2.1}

This section consists of four subsections. In the first three subsections we present the proofs of Theorems \ref{2-thm-1}, \ref{2-thm-2} and \ref{4-thm}, respectively. The proofs of the applications of the main results are given in Subsection 4.4.

\subsection{Proof of Theorem \ref{2-thm-1}}

Let $i\in\{1,2\}$. By Proposition \ref{prop-Trevisan}, there exists a martingale solution $P^i_{\mu^i_0}$ to the SDE \eqref{Ito-SDE} with coefficients $\sigma^i$ and $b^i$, and the initial probability distribution $\mu^i_0$ such that, for all $\varphi\in C_c^\infty(\R^d)$,
  $$\int_{\R^d}\varphi(x)u^i_t(x)\,\d x=\int_{\mathbb W_T^d}\varphi(w_t)\,\d P^i_{\mu^i_0}(w).$$
Applying Proposition \ref{2-prop-1}, we obtain a weak solution $\big(\Omega^i,\mathcal G^i, (\mathcal G_t^i)_{0\leq t\leq T},P^i; X^i, W^i\big)$ to SDE \eqref{Ito-SDE} with coefficients $\sigma^i$ and $b^i$, satisfying $(X^i)_\# P^i=P^i_{\mu^i_0}$. Next, we can find  $\pi_\delta\in \mathcal C(\mu^1_0,\mu^2_0)$ such that
  \begin{equation}\label{2-thm-1.0}
  \tilde \D_\delta (\mu^1_0,\mu^2_0)=\int_{\R^d\times\R^d} \log\bigg(1+\frac{|x-y|^2}{\delta^2}\bigg) \d\pi_\delta(x,y).
  \end{equation}
Finally, by Proposition \ref{2-prop-2}, there exists a common filtered probability space $(\Omega,\mathcal G,(\mathcal G_t)_{0\leq t\leq T},P)$, on which there are defined a standard $m$-dimensional $(\mathcal G_t)$-Brownian motion $W$ and two continuous $(\mathcal G_t)$-adapted processes $Y^1$ and $Y^2$ such that $\mbox{law}(Y^1_0, Y^2_0)= \pi_\delta$ and for $i=1,2$, $Y^i$ is distributed as $P^i_{\mu^i_0}$ on $\mathbb W_T^d$; moreover, it holds a.s. that
  $$Y^i_t=Y^i_0+\int_0^t b^i_s(Y^i_s)\,\d s  +\int_0^t \sigma^i_s(Y^i_s)\,\d W_s,\quad \mbox{for all } t\in[0, T].$$

The following arguments are by now standard for dealing with SDEs with weakly differentiable coefficients, see for instance \cite[Lemma 6.1]{Zhang10}, \cite[Theorem 5.2]{FangLuoThalmaier} and \cite[Lemma 4.1]{Zhang13}. Set $Z_t=Y^1_t-Y^2_t$ and fix $\delta>0$. We have by It\^o's formula that
  \begin{equation}\label{2-thm-1.1}
  \begin{split}
  \d\log\bigg(\frac{|Z_t|^2}{\delta^2}+1\bigg)
  &=2\frac{\big\< Z_t,\big[\sigma^1_t(Y^1_t) -\sigma^2_t(Y^2_t)\big]\,\d W_t\big\>} {|Z_t|^2+\delta^2} -2\frac{\big|\big[\sigma^1_t(Y^1_t)  -\sigma^2_t(Y^2_t)\big]^\ast Z_t\big|^2} {(|Z_t|^2+\delta^2)^2}\,\d t\\
  &\hskip13pt +\frac{2\big\< Z_t, b^1_t(Y^1_t)-b^2_t(Y^2_t)\big\>  +\big\|\sigma^1_t(Y^1_t) -\sigma^2_t(Y^2_t)\big\|^2}{|Z_t|^2+\delta^2}\,\d t.
  \end{split}
  \end{equation}
The quadratic variation of the martingale part on $[0,T]$ is finite, since, by \eqref{integrability},
  \begin{equation*}
  \begin{split}
  &\hskip13pt 4\,\E\int_0^T \frac{\big|\big[\sigma^1_t(Y^1_t) -\sigma^2_t(Y^2_t)\big]^\ast Z_t\big|^2} {(|Z_t|^2+\delta^2)^2}\,\d t\\ &\leq \frac4{\delta^2} \E\int_0^T \big\|\sigma^1_t(Y^1_t) -\sigma^2_t(Y^2_t)\big\|^2\,\d t\\
  &\leq \frac8{\delta^2} \int_0^T \bigg(\int_{\R^d} \|a^1_t\|\,\d\mu^1_t + \int_{\R^d} \|a^2_t\|\,\d\mu^2_t\bigg)\d t<\infty.
  \end{split}
  \end{equation*}
Hence, it is a square integrable martingale. Taking expectation on both sides of \eqref{2-thm-1.1} with respect to $P$ yields
  \begin{equation*}
  \begin{split}
  \E\log\bigg(\frac{|Z_t|^2}{\delta^2}+1\bigg)
  &\leq \E\log\bigg(\frac{|Z_0|^2}{\delta^2}+1\bigg)+ 2\,\E\int_0^{t}\frac{\big\langle Z_s,b^1_s(Y^1_s)-b^2_s(Y^2_s)\big\rangle} {|Z_s|^2+\delta^2}\,\d s\\
  &\hskip13pt +\E \int_0^t \frac{\big\|\sigma^1_s(Y^1_s) -\sigma^2_s(Y^2_s)\big\|^2} {|Z_s|^2+\delta^2}\,\d s.
  \end{split}
  \end{equation*}
Noticing that the joint distribution of $(Y^1_t, Y^2_t)$ belongs to $\mathcal C(\mu^1_t, \mu^2_t)$, we deduce from \eqref{2-thm-1.0} that
  \begin{equation}\label{2-thm-1.2}
  \aligned
  \tilde\D_\delta (\mu^1_t, \mu^2_t)&\leq \tilde\D_\delta (\mu^1_0, \mu^2_0) + 2\,\E\int_0^{t}\frac{\big\langle Z_s,b^1_s(Y^1_s)-b^2_s(Y^2_s)\big\rangle} {|Z_s|^2+\delta^2}\,\d s\\
  &\hskip13pt +\E \int_0^t \frac{\big\|\sigma^1_s(Y^1_s) -\sigma^2_s(Y^2_s)\big\|^2} {|Z_s|^2+\delta^2}\,\d s\\
  &=: \tilde\D_\delta (\mu^1_0, \mu^2_0) + I_1 +I_2. \endaligned
  \end{equation}
In the sequel, we shall estimate the two terms $I_1$ and $I_2$ separately.

\emph{Step 1.} By the triangle inequality, we have
  \begin{equation}\label{2-thm-1.3}
  \begin{split}
  I_1&\leq 2\,\E\int_0^t \frac{\big|b^1_s(Y^1_s) -b^2_s(Y^2_s)\big|} {\sqrt{|Z_s|^2+\delta^2}}\,\d s\\
  &\leq 2\,\E\int_0^t \frac{\big|b^1_s(Y^1_s) -b^1_s(Y^2_s)\big|} {\sqrt{|Z_s|^2+\delta^2}}\,\d s + 2\,\E\int_0^t \frac{\big|b^1_s(Y^2_s) -b^2_s(Y^2_s)\big|} {\sqrt{|Z_s|^2+\delta^2}}\,\d s\\
  &=: I_{1,1}+I_{1,2}.
  \end{split}
  \end{equation}
We first estimate $I_{1,2}$. Recall that $Y^2_s$ has the same law as $X^2_s$, which is distributed as $u^2_s(x)\,\d x$. Thus, by H\"{o}lder's inequality,
  \begin{equation}\label{2-thm-1.4}
  \begin{split}
  I_{1,2}&\leq \frac2\delta \int_0^t\! \int_{\R^d} \big|b^1_s(x) -b^2_s(x)\big|u^2_s(x)\,\d x\d s\\
  &\leq \frac2\delta \int_0^t\! \|b^1_s -b^2_s\|_{L^p}\|u^2_s\|_{L^q}\,\d s\\
  &\leq \frac2\delta \|u^2\|_{L^\infty(L^q)} \|b^1-b^2\|_{L^1(L^p)}.
  \end{split}
  \end{equation}
Next, in order to estimate $I_{1,1}$, we choose $\chi\in C_c^\infty(\R^d,\R_+)$ such that $\supp(\chi)\subset B(1)$ and $\int_{\R^d}\chi(x)\,\d x=1$. For $\eps\in(0,1)$ let $\chi_\eps(x)=\eps^{-d}\chi(x/\eps),\,x\in\R^d$. Define $b^{1,\eps}_s=b^1_s\ast\chi_\eps$. Then for a.e. $s\in[0,T]$, $b^{1,\eps}_s\in C^\infty(\R^d)$ for every $\eps\in(0,1)$. Then, by the triangle inequality again,
  \begin{equation}\label{2-thm-1.5}
  \begin{split}
  I_{1,1}&\leq 2\,\E\int_0^t \frac{\big|b^{1,\eps}_s(Y^1_s) -b^{1,\eps}_s(Y^2_s)\big|}{\sqrt{|Z_s|^2+\delta^2}}\, \d s\\
  &\hskip13pt +2\,\E \int_0^t \frac{\big|b^{1,\eps}_s(Y^1_s)- b^1_s(Y^1_s)\big| +\big|b^{1,\eps}_s(Y^2_s)- b^1_s(Y^2_s)\big|} {|Z_s|^2+\delta^2}\,\d s\\
  &=: I_{1,1,1}+I_{1,1,2}.
  \end{split}
  \end{equation}
Since $b^{1,\eps}_s$ is smooth, \eqref{2-lem-maximal-funct.1} implies that
  $$\aligned I_{1,1,1}&\leq 2C_d\, \E\int_0^t \big(M|\nabla b^{1,\eps}_s|(Y^1_s) + M|\nabla b^{1,\eps}_s|(Y^2_s)\big)\,\d s\\
  &= 2C_d \int_0^t\!\int_{\R^d} M|\nabla b^{1,\eps}_s|(x) \big(u^1_s(x) + u^2_s(x)\big)\,\d x\d s.\endaligned$$
Then H\"older's inequality leads to
  \begin{equation}\label{2-thm-1.6}
  \begin{split}
  I_{1,1,1}&\leq 2C_d \int_0^t \big\| M|\nabla b^{1,\eps}_s|\big\|_{L^p} \big(\|u^1_s\|_{L^q} +\|u^2_s\|_{L^q}\big)\,\d s\\
  &\leq 2C_d \bigg(\sum_{i=1}^2\|u^i\|_{L^\infty (L^q)}\bigg)\int_0^t C_{d,p}\big\| \nabla b^{1,\eps}_s\big\|_{L^p}\,\d s\\
  &\leq C'_{d,p}\bigg(\sum_{i=1}^2\|u^i\|_{L^\infty (L^q)}\bigg) \big\| \nabla b^1\big\|_{L^1(L^p)},
  \end{split}
  \end{equation}
where in the second inequality we have used \eqref{2-lem-maximal-funct.2}. The quantity $I_{1,1,2}$ can be treated as follows:
  $$\aligned I_{1,1,2}&\leq \frac2{\delta^2} \E\int_0^t \big(\big|b^{1,\eps}_s(Y^1_s)- b^1_s(Y^1_s)\big| + \big|b^{1,\eps}_s(Y^2_s)- b^1_s(Y^2_s)\big|\big)\,\d s \\
  &=  \frac2{\delta^2} \int_0^t\! \int_{\R^d} \big|b^{1,\eps}_s(x)- b^1_s(x)\big| \big(u^1_s(x) +u^2_s(x)\big)\,\d x\d s. \endaligned$$
Again, by H\"older's inequality,
  \begin{equation}\label{2-thm-1.7}
  \begin{split}
  I_{1,1,2}&\leq \frac2{\delta^2} \int_0^t \big\|b^{1,\eps}_s - b^1_s\big\|_{L^p} \big(\|u^1_s\|_{L^q} +\|u^2_s\|_{L^q}\big)\,\d s\\
  &\leq \frac2{\delta^2} \bigg(\sum_{i=1}^2\|u^i\|_{L^\infty(L^q)}\bigg)  \int_0^t \big\|b^{1,\eps}_s - b^1_s\big\|_{L^p} \,\d s.
  \end{split}
  \end{equation}
Since $b^1\in L^1(0,T; L^p(\R^d,\R^d))$, the right hand side of \eqref{2-thm-1.7} vanishes as $\eps\to0$. Combining \eqref{2-thm-1.5}--\eqref{2-thm-1.7} and letting $\eps\to0$, we arrive at
  \begin{equation*}
   I_{1,1}\leq C'_{d,p}\bigg(\sum_{i=1}^2\|u^i\|_{L^\infty(L^q)}\bigg) \big\| \nabla b^1\big\|_{L^1(L^p)}.
  \end{equation*}
This estimate together with \eqref{2-thm-1.3} and \eqref{2-thm-1.4} gives us
  \begin{equation}\label{2-thm-1.8}
  \begin{split}
  I_1&\leq \frac2\delta \|u^2\|_{L^\infty( L^q)} \|b^1-b^2\|_{L^1(L^p)} + C'_{d,p}\bigg(\sum_{i=1}^2\|u^i\|_{L^\infty(L^q)}\bigg) \big\| \nabla b^1\big\|_{L^1(L^p)}.
  \end{split}
  \end{equation}

\emph{Step 2.} The treatment of $I_2$ is similar to that of $I_1$. First,
  \begin{equation}\label{2-thm-1.9}
  \begin{split}
  I_2&\leq 2\,\E \int_0^t \frac{\big\|\sigma^1_s(Y^1_s) -\sigma^1_s(Y^2_s)\big\|^2} {|Z_s|^2+\delta^2}\,\d s +2\,\E \int_0^t \frac{\big\|\sigma^1_s(Y^2_s) -\sigma^2_s(Y^2_s)\big\|^2} {|Z_s|^2+\delta^2}\,\d s\\
  &=: I_{2,1} +I_{2,2}.
  \end{split}
  \end{equation}
In the similar way as for $I_{1,2}$, we have
  \begin{equation}\label{2-thm-1.10}
  I_{2,2}\leq \frac2{\delta^2}\|u^2\|_{L^\infty(L^q)} \|\sigma^1-\sigma^2\|_{L^2(L^{2p})}^2.
  \end{equation}
Next, denoting by $\sigma^{1,\eps}_s=\sigma^1_s\ast \chi_\eps$ with the same $\chi_\eps$ as above, we have
  $$\aligned I_{2,1}&\leq 6\,\E \int_0^t \frac{\big\|\sigma^{1,\eps}_s(Y^1_s) -\sigma^{1,\eps}_s(Y^2_s)\big\|^2} {|Z_s|^2+\delta^2}\,\d s\\
  &\hskip13pt + 6\,\E \int_0^t \frac{\big\|\sigma^{1,\eps}_s(Y^1_s) -\sigma^1_s(Y^1_s)\big\|^2 + \big\|\sigma^{1,\eps}_s(Y^2_s) -\sigma^1_s(Y^2_s)\big\|^2} {|Z_s|^2+\delta^2}\,\d s\\
  &=: I_{2,1,1}+I_{2,1,2}. \endaligned$$
Following the arguments for $I_{1,1,1}$ and $I_{1,1,2}$, respectively, we can show that
  $$I_{2,1,1}\leq C''_{d,p} \bigg(\sum_{i=1}^2\|u^i\|_{L^\infty(L^q)}\bigg) \big\| \nabla \sigma^1\big\|_{L^2(L^{2p})}^2 $$
and
  $$I_{2,1,2}\leq \frac6{\delta^2} \bigg(\sum_{i=1}^2\|u^i\|_{L^\infty(L^q)}\bigg) \int_0^t \big\|\sigma^{1,\eps}_s -\sigma^1_s\big\|_{L^{2p}}^2\,\d s.$$
Combining the above three inequalities and letting $\eps\to0$, we arrive at
  \begin{equation}\label{2-thm-1.11}
  I_{2,1}\leq  C''_{d,p} \bigg(\sum_{i=1}^2\|u^i\|_{L^\infty(L^q)}\bigg) \big\| \nabla \sigma^1\big\|_{L^2(L^{2p})}^2.
  \end{equation}
We deduce from \eqref{2-thm-1.9}--\eqref{2-thm-1.11} that
  \begin{equation}\label{2-thm-1.12}
  I_2\leq \frac2{\delta^2}\|u^2\|_{L^\infty(L^q)}\|\sigma^1-\sigma^2\|_{L^2(L^{2p})}^2 + C''_{d,p} \bigg(\sum_{i=1}^2\|u^i\|_{L^\infty(L^q)}\bigg) \big\| \nabla \sigma^1\big\|_{L^2(L^{2p})}^2.
  \end{equation}

\emph{Step 3.} Substituting the estimates \eqref{2-thm-1.8}  and \eqref{2-thm-1.12} into \eqref{2-thm-1.2}, we obtain
  \begin{equation}\label{2-thm-final}
  \begin{split}
  \tilde\D_\delta (\mu^1_t, \mu^2_t)&\leq \tilde\D_\delta (\mu^1_0, \mu^2_0) +2\|u^2\|_{L^\infty(L^q)} \bigg[\frac1\delta \|b^1-b^2\|_{L^1(L^p)} + \frac1{\delta^2} \|\sigma^1-\sigma^2\|_{L^2(L^{2p})}^2\bigg]\\
  &\hskip13pt + C_{d,p} \bigg[\sum_{i=1}^2\|u^i\|_{L^\infty(L^q)}\bigg] \Big(\big\| \nabla b^1\big\|_{L^1(L^p)} + \big\| \nabla \sigma^1\big\|_{L^2(L^{2p})}^2 \Big),
  \end{split}
  \end{equation}
where the constant $C_{d,p}>0$ depends only on $d$ and $p$. The proof is completed.

\subsection{Proof of Theorem \ref{2-thm-2}}

The proof of Theorem \ref{2-thm-2} is more or less similar to that of Theorem \ref{2-thm-1}. The main difference is that we cannot apply the maximal inequality \eqref{2-lem-maximal-funct.2} when dealing with the coefficients $b^i$ since it has only the $W^{1,1}$ regularity, so we shall follow the idea of argument in \cite[Lemma 4]{Seis16a}; see also \cite[Theorem 4.1]{Jabin}.

We still have \eqref{2-thm-1.2}, i.e.,
  \begin{equation}\label{thm-1.3-1}
  \begin{split}
  \tilde\D_\delta (\mu^1_t, \mu^2_t)&\leq \tilde\D_\delta (\mu^1_0, \mu^2_0) + 2\,\E\int_0^{t}\frac{\big\langle Z_s, b^1_s(Y^1_s)- b^2_s(Y^2_s)\big\rangle} {|Z_s|^2+\delta^2}\,\d s\\
  &\hskip13pt +\E \int_0^t \frac{\big\|\sigma^1_s(Y^1_s) -\sigma^2_s(Y^2_s)\big\|^2} {|Z_s|^2+\delta^2}\,\d s\\
  & =: \tilde\D_\delta (\mu^1_0, \mu^2_0) +I_1+I_2.
  \end{split}
  \end{equation}
The method to  estimate $I_2$ is almost the same as before, since $\sigma^1_s\in W^{1,2}(\R^d,\mathcal{M}_{d,m})$ and then the maximal inequality \eqref{2-lem-maximal-funct.2} is applicable. Hence,
  \begin{equation}\label{2-thm-2.1}
  \begin{split}
  I_2&\leq \frac2{\delta^2}\|u^2\|_{L^\infty(L^\infty)}\|\sigma^1-\sigma^2\|_{L^2(L^2)}^2 + C_d \bigg(\sum_{i=1}^2\|u^i\|_{L^\infty(L^\infty)}\bigg) \big\| \nabla \sigma^1\big\|_{L^2(L^2)}^2.
  \end{split}
  \end{equation}
We treat the term $I_1$ in a similar way as in \emph{Step 1} in the proof of Theorem \ref{2-thm-1}. Indeed, similar to \eqref{2-thm-1.4} and \eqref{2-thm-1.7}, it holds that
  \begin{equation}\label{2-thm-2.2}
  I_{1,2}\leq \frac2\delta \|u^2\|_{L^\infty(L^\infty)} \|b^1-b^2\|_{L^1(L^1)},
  \end{equation}
and
  \begin{equation}\label{2-thm-2.3}
  I_{1,1,2}\leq \frac2{\delta^2} \bigg(\sum_{i=1}^2 \|u^i\|_{L^\infty(L^\infty)}\bigg) \int_0^t \big\|b^{1,\eps}_s - b^1_s\big\|_{L^1} \,\d s,
  \end{equation}
which tends to 0 as $\eps\to 0$.

It remains to estimate
$$I_{1,1,1}=2\,\E\int_0^t \frac{\big|b^{1,\eps}_s(Y^1_s) -b^{1,\eps}_s(Y^2_s)\big|}{\sqrt{|Z_s|^2+\delta^2}}\, \d s,$$
for which we need the following lemma (see \cite[Lemma 3.1]{Jabin} for a sketched proof and \cite[Lemma 3.6]{Zhang13} for a related result).

\begin{lemma}\label{2-lem-2}
There exists a constant $C_d>0$ such that, for any smooth function $f:\R^d\to\R$ and any $x, y \in\R^d$,
  \begin{equation}\label{2-lem-2.1}
  |f(x)-f(y)|\leq C_d\int_{B(x,y)} \bigg(\frac1{|x-z|^{d-1}} + \frac1{|y-z|^{d-1}}\bigg)|\nabla f(z)|\,\d z,
  \end{equation}
where $B(x,y)$ is the ball with center $(x+y)/2$ and diameter $|x-y|$. Moreover,
  \begin{equation}\label{2-lem-2.2}
  \int_{B(x,y)} \bigg(\frac1{|x-z|^{d-1}} + \frac1{|y-z|^{d-1}}\bigg) \d z\leq C'_d|x-y|.
  \end{equation}
\end{lemma}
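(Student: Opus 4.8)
\textbf{Proof proposal for Lemma \ref{2-lem-2}.}

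The plan is to prove the representation formula \eqref{2-lem-2.1} by the standard fundamental-theorem-of-calculus argument along rays, and then to deduce the integrability bound \eqref{2-lem-2.2} by a direct change of variables to polar coordinates. First I would recall the one-dimensional identity $f(x) - f(\xi) = -\int_0^1 \frac{\d}{\d r}\, f\big(\xi + r(x-\xi)\big)\,\d r = -\int_0^1 \<\nabla f(\xi + r(x-\xi)), x-\xi\>\,\d r$, valid for any $\xi \in \R^d$ since $f$ is smooth. Averaging this over $\xi$ in the ball $B(x,y)$ (equivalently, writing $f(x) - f(y)$ as $\big(f(x) - \bint_{B(x,y)} f\big) + \big(\bint_{B(x,y)} f - f(y)\big)$ and treating each term symmetrically), I would obtain a bound of the form $|f(x) - f(y)| \le C_d \int_{B(x,y)} |\nabla f(z)| \, K(z)\, \d z$, where the kernel $K(z)$ comes from the Jacobian of the substitution $z = \xi + r(x-\xi)$ and turns out to behave like $|x-z|^{-(d-1)}$ (respectively $|y-z|^{-(d-1)}$) after integrating out the radial variable; this is precisely the classical potential estimate used to prove the Sobolev--Poincar\'e inequality. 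Combining the two symmetric contributions gives \eqref{2-lem-2.1}.

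For \eqref{2-lem-2.2}, I would simply estimate $\int_{B(x,y)} |x-z|^{-(d-1)}\,\d z$ by enlarging the domain of integration to the ball $B(x, |x-y|)$ centered at $x$ of radius $|x-y|$ (which contains $B(x,y)$), and passing to polar coordinates centered at $x$: this yields $\int_{B(x,|x-y|)} |x-z|^{-(d-1)}\,\d z = \omega_{d-1} \int_0^{|x-y|} \rho^{-(d-1)} \rho^{d-1}\,\d\rho = \omega_{d-1}\,|x-y|$, where $\omega_{d-1}$ is the surface area of the unit sphere. The same estimate applies verbatim to the $|y-z|^{-(d-1)}$ term, and adding the two gives \eqref{2-lem-2.2} with $C'_d = 2\omega_{d-1}$.

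I do not expect any serious obstacle here, as both inequalities are textbook facts from the theory of Riesz potentials and Sobolev embeddings; the main point to be careful about is bookkeeping the Jacobian factors correctly when averaging the FTC identity over the base point $\xi \in B(x,y)$, so that the singularity exponent comes out exactly as $d-1$ rather than something weaker. One clean way to organize this is: fix $z$, ask for which pairs $(\xi, r)$ one has $\xi + r(x-\xi) = z$, and integrate; alternatively, one can restrict attention to the cone of directions from $x$ toward points of $B(x,y)$ and integrate radially first. Since the paper only needs the stated inequalities (and cites \cite[Lemma 3.1]{Jabin} for a sketch), it suffices to present the ray-averaging computation and the polar-coordinate bound without dwelling on optimal constants.
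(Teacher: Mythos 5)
Your proposal is correct: the ray-averaging (Riesz-potential) argument for \eqref{2-lem-2.1} plus the polar-coordinate bound for \eqref{2-lem-2.2} is the standard proof, and it is essentially the argument sketched in the cited reference (Jabin, Lemma 3.1), as the paper gives no proof of its own for this lemma. The one point to watch---which you yourself flag---is to organize the computation by integrating radially first along directions emanating from $x$ (and symmetrically from $y$), since averaging the fundamental-theorem identity over $\xi$ at fixed $r$ and changing variables naively yields the singularity $|x-z|^{-d}$ rather than $|x-z|^{-(d-1)}$; with the radial-first bookkeeping the constant depends only on $d$ because the diameter of $B(x,y)$ is exactly $|x-y|$.
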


We continue the proof of Theorem \ref{2-thm-2}. By Lemma \ref{2-lem-2},
  \begin{equation}\label{2-thm-2.4}
  \begin{split}
  I_{1,1,1}&\leq 2C_d\,\E \int_0^T \frac1{\sqrt{|Z_s|^2+\delta^2}}\int_{B(Y^1_s,Y^2_s)} \bigg(\frac1{|Y^1_s-z|^{d-1}} + \frac1{|Y^2_s-z|^{d-1}}\bigg) |\nabla b^{1,\eps}_s(z)|\,\d z\d s.
  \end{split}
  \end{equation}
Fix any $M>0$. Let $B_{s,M}:=\{z\in B(Y^1_s,Y^2_s):|\nabla b^{1,\eps}_s(z)|\leq M\}$ and $B_{s,M}^c:=B(Y^1_s,Y^2_s) \setminus B_{s,M}$. Then
  \begin{equation}\label{2-thm-2.5}
  \begin{split}
  J_1&:= \E \int_0^T \frac1{\sqrt{|Z_s|^2+\delta^2}}\int_{B_{s,M}} \bigg(\frac1{|Y^1_s-z|^{d-1}} + \frac1{|Y^2_s-z|^{d-1}}\bigg) |\nabla b^{1,\eps}_s(z)|\,\d z\d s\\
  &\leq M\, \E \int_0^T \frac1{\sqrt{|Z_s|^2+\delta^2}}\int_{B(Y^1_s,Y^2_s)} \bigg(\frac1{|Y^1_s-z|^{d-1}} + \frac1{|Y^2_s-z|^{d-1}}\bigg) \d z\d s\\
  &\leq M\, \E \int_0^T \frac1{\sqrt{|Z_s|^2+\delta^2}} \, C'_d |Y^1_s-Y^2_s|\,\d s\leq C'_d MT,
  \end{split}
  \end{equation}
where in the second inequality we have used \eqref{2-lem-2.2}. Next, since $|\nabla b^1|\in L^1(0,T; L^1(\R^d))$, by the de la Vall\'ee--Poussin theorem (see e.g. \cite[Theorem 22]{MeyerB}), we can find a convex increasing function $G:\R_+\to \R_+$ such that
  $$s\mapsto\frac{G(s)}s \mbox{ increases monotonely to infinity as }s\uparrow\infty,$$
and
  $$\int_0^T\!\!\int_{\R^d} G(|\nabla b^1_s(x)|)\,\d x\d s <+\infty.$$
As a consequence,
  \begin{equation*}
  \begin{split}
  J_2&:= \E \int_0^T \frac1{\sqrt{|Z_s|^2+\delta^2}}\int_{B_{s,M}^c} \bigg(\frac1{|Y^1_s-z|^{d-1}} + \frac1{|Y^2_s-z|^{d-1}}\bigg) |\nabla b^{1,\eps}_s(z)|\,\d z\d s\\
  &\leq \frac{M}{G(M)} \E \int_0^T\!\! \int_{B_{s,M}^c} \bigg(\frac1{|Y^1_s-z|^{d-1}} + \frac1{|Y^2_s-z|^{d-1}}\bigg) \frac{G(|\nabla b^{1,\eps}_s(z)|)}{\sqrt{|Z_s|^2+\delta^2}}\,\d z\d s.
  \end{split}
  \end{equation*}
Note that $|Y^1_s-z| \vee |Y^2_s-z|\leq |Z_s|$ for all $z\in B_M^c\subset B(Y^1_s,Y^2_s)$. Hence,
  \begin{equation}\label{2-thm-2.6}
  \begin{split}
  J_2 &\leq \frac{M}{G(M)}\sum_{i=1}^2 \E \int_0^T\!\! \int_{B_{s,M}^c} \frac1{|Y^i_s-z|^{d-1}\sqrt{|Y^i_s-z|^2+\delta^2}} G(|\nabla b^{1,\eps}_s(z)|)\,\d z\d s\\
  &\leq \frac{M}{G(M)}\sum_{i=1}^2 \int_0^T\!\! \int_{\R^d} G(|\nabla b^{1,\eps}_s(z)|) \E\bigg( \frac1{|Y^i_s-z|^{d-1}\sqrt{|Y^i_s-z|^2+\delta^2}}\bigg) \d z\d s\\
  &= \frac{M}{G(M)}\sum_{i=1}^2 \int_0^T\!\! \int_{\R^d} G(|\nabla b^{1,\eps}_s(z)|) \int_{\R^d} \frac{u^i_s(x)}{|x-z|^{d-1}\sqrt{|x-z|^2+\delta^2}}\,\d x \d z\d s.
  \end{split}
  \end{equation}
Using the facts that $u^i_s\in L^\infty(\R^d)$ and it is a probability density, we have
  $$\aligned
  &\hskip13pt\int_{\R^d} \frac{u^i_s(x)}{|x-z|^{d-1}\sqrt{|x-z|^2+\delta^2}}\,\d x\\
  &= \bigg(\int_{\{|x-z|\leq 1\}} +\int_{\{|x-z|> 1\}}\bigg) \frac{u^i_s(x)}{|x-z|^{d-1}\sqrt{|x-z|^2+\delta^2}}\,\d x\\
  &\leq \bar C_d \|u^i_s\|_{L^\infty} \int_0^1 \frac1{\sqrt{r^2+\delta^2}}\,\d r +1\\
  &\leq 1+ \sqrt 2 \bar C_d \|u^i_s\|_{L^\infty} \log\Big(1+\frac1\delta\Big).
  \endaligned$$
Substituting this estimate into \eqref{2-thm-2.6} leads to
  $$\aligned
  J_2&\leq \frac{M}{G(M)}\sum_{i=1}^2 \bigg[1+\sqrt 2\, \bar C_d \|u^i\|_{L^\infty(L^\infty)} \log\Big(1+\frac1\delta\Big)\bigg] \int_0^T\!\! \int_{\R^d} G(|\nabla b^{1,\eps}_s(z)|)\, \d z\d s\\
  &\leq \tilde C_d\frac{M}{G(M)}\bigg[1+ \log\Big(1+\frac1\delta\Big)\sum_{i=1}^2 \|u^i\|_{L^\infty(L^\infty)}\bigg] \int_0^T\!\! \int_{\R^d} G(|\nabla b^1_s|\ast\chi_\eps(z))\, \d z\d s.
  \endaligned$$
The convexity of $G(s)$ and Jensen's inequality imply that
  $$\int_{\R^d} G(|\nabla b^1_s|\ast\chi_\eps(z))\, \d z\leq \int_{\R^d} G(|\nabla b^1_s|)\ast\chi_\eps(z)\, \d z= \int_{\R^d} G(|\nabla b^1_s|)\, \d z.$$
Therefore,
  $$J_2\leq \tilde C_d\frac{M}{G(M)}\|G(|\nabla b^1|)\|_{L^1(L^1)}\bigg[1+ \log\Big(1+\frac1\delta\Big)\sum_{i=1}^2 \|u^i\|_{L^\infty(L^\infty)}\bigg].$$
Combining this estimate with \eqref{2-thm-2.4} and \eqref{2-thm-2.5}, we finally get
  $$I_{1,1,1}\leq C'_d MT + \tilde C_d\frac{M}{G(M)} \|G(|\nabla b^1|)\|_{L^1(L^1)} \bigg[1+ \log\Big(1+\frac1\delta\Big)\sum_{i=1}^2 \|u^i\|_{L^\infty(L^\infty)}\bigg].$$
Define
  $$\phi(\delta)=\inf_{M>0}\left\{M+\frac{M}{G(M)}\bigg[1+\log\Big(1+\frac{1}{\delta}\Big)\bigg]\right\}.$$
Then $\frac{\phi(\delta)}{\log (1/\delta)}\to 0$ as $\delta$ vanishes. Indeed, for any $M>0$ and $\delta \in (0,1)$,
  $$\frac{\phi(\delta)}{\log \frac1\delta}\leq \frac{M}{\log \frac1\delta} +\frac{M}{G(M)} \cdot \frac{1+ \log\big(1+\frac{1}{\delta}\big)}{\log \frac1\delta}.$$
Since $G(s)/s$ tends to $\infty$ as $s\uparrow \infty$, first letting $\delta\to 0$ and then $M\to \infty$ gives the result. With this notation, we obtain
  \begin{eqnarray}\label{estimate-I_1,1,1}
  I_{1,1,1} \leq C_{d,T} \phi(\delta) \big(1+\|G(|\nabla b^1|)\|_{L^1(L^1)}\big) \bigg[1+ \sum_{i=1}^2 \|u^i\|_{L^\infty(L^\infty)}\bigg],
  \end{eqnarray}
where the constant $C_{d,T}>0$, depending only on $d$ and $T$.

Finally, as $I_1\leq I_{1,1}+I_{1,2}\leq I_{1,1,1}+I_{1,1,2}+I_{1,2}$, we combine \eqref{estimate-I_1,1,1} together with \eqref{2-thm-2.2} and \eqref{2-thm-2.3} and let $\eps\to0$ to get that
  $$\aligned
  I_1&\leq \frac2\delta \|u^2\|_{L^\infty(L^\infty)} \|b^1-b^2\|_{L^1(L^1)}\\
  &\hskip13pt + C_{d,T} \phi(\delta) \big(1+\|G(|\nabla b^1|)\|_{L^1(L^1)}\big) \bigg[1+ \sum_{i=1}^2 \|u^i\|_{L^\infty(L^\infty)}\bigg].
  \endaligned$$
With the above inequality  and \eqref{2-thm-2.1} in mind, we finish the proof in a similar way as in \emph{Step 3} of the proof of Theorem \ref{2-thm-1}.

\subsection{Proof of Theorem \ref{4-thm}}

We take $\pi_\delta\in \mathcal C(\mu^1_0, \mu^2_0)$ such that
  \begin{equation}\label{proof.1}
  \D_{\psi_\delta} (\mu^1_0, \mu^2_0)= \int_{\R^d\times \R^d} \psi_\delta (|x-y|^2)\,\d\pi_\delta(x,y).
  \end{equation}
As in the proof of Theorem \ref{2-thm-1}, there is a filtered probability space $(\Omega,\mathcal{G},(\mathcal{G}_t)_{0\leq t\leq T},P)$, on which there are defined a standard $m$-dimensional $(\mathcal{G}_t)$-Brownian motion $W$ and two continuous $(\mathcal{G}_t)$-adapted processes $Y^1$ and $Y^2$ such that $\pi_\delta =\mbox{law}(Y^1_0, Y^2_0)$ and, for each $i=1,2$,   $Y^i$ is distributed as $P^i_{\mu^i_0}$ on $\mathbb W^d_T$; moreover, it holds a.s. that
  $$Y^i_t=Y^i_0+\int_0^t b^i_s(Y^i_s)\,\d s +\int_0^t \sigma^i_s(Y^i_s)\,\d W_s,\quad \mbox{for all } 0\leq t\leq T.$$

Set $Z_t=Y^1_t-Y^2_t$ and fix $\delta>0$. Recall that
 $$\psi_\delta(s)=\int_0^s \frac{\d r}{\rho(r) +\delta^2},\quad s>0.$$
We have by It\^o's formula that
  \begin{equation*}
  \begin{split}
  \psi_\delta \big(|Z_t|^2\big)
  &= \psi_\delta(|Z_0|^2)+ 2\int_0^t \frac{\big\langle Z_s,\big[\sigma^1_s(Y^1_s) -\sigma^2_s(Y^2_s)\big]\,\d W_s\big\rangle} {\rho(|Z_s|^2)+\delta^2}\\
  &\hskip13pt +\int_0^t \frac{2\big\langle Z_s,b^1_s(Y^1_s)-b^2_s(Y^2_s)\big\rangle
  +\big\|\sigma^1_s(Y^1_s)-\sigma^2_s(Y^2_s)\big\|^2}{\rho(|Z_s|^2)+\delta^2}\,\d s\\
  &\hskip13pt -2\int_0^t \rho'(|Z_s|^2)\frac{\big|\big[\sigma^1_s(Y^1_s) -\sigma^2_s(Y^2_s)\big]^\ast Z_s\big|^2} {({\rho(|Z_s|^2)+\delta^2})^2}\,\d s.
  \end{split}
  \end{equation*}
Similar to the arguments right below \eqref{2-thm-1.1}, we can show that the martingale part is a square integrable martingale, since $\rho(s)\geq s\geq 0$. Using the fact that $\rho'\geq 0$ and taking expectation on both sides with respect to $P$, we derive that
  \begin{equation*}
  \begin{split}
  \E\psi_\delta \big(|Z_t|^2\big)
  &\leq \E\psi_\delta(|Z_0|^2)+ \E\int_0^t \frac{2\big\< Z_s, b^1_s(Y^1_s)-b^2_s(Y^2_s)\big\>+ \big\|\sigma^1_s(Y^1_s) -\sigma^2_s(Y^2_s)\big\|^2} {\rho(|Z_s|^2)+\delta^2}\,\d s.
  \end{split}
  \end{equation*}
Since $\mbox{law}(Y^1_0, Y^2_0)\in \mathcal C(\mu^1_t, \mu^2_t)$ for $t\in[0,T]$, this inequality plus \eqref{proof.1} leads to
  \begin{equation}\label{proof.2}
  \begin{split}
  \D_{\psi_\delta} (\mu^1_t, \mu^2_t)
  &\leq \D_{\psi_\delta} (\mu^1_0, \mu^2_0)+ 2\,\E\int_0^t \frac{\big\langle Z_s, b^1_s(Y^1_s)-b^2_s(Y^2_s)\big\rangle} {\rho(|Z_s|^2)+\delta^2}\,\d s\\
  &\hskip13pt +\E \int_0^t \frac{\big\|\sigma^1_s(Y^1_s) -\sigma^2_s(Y^2_s)\big\|^2} {\rho(|Z_s|^2)+\delta^2}\,\d s\\
  & =: \D_{\psi_\delta} (\mu^1_0, \mu^2_0) +  I_1+I_2.
  \end{split}
  \end{equation}
We shall estimate the two terms $I_1$ and $I_2$ in the next two steps, respectively.

\emph{Step 1.}  The arguments are similar to \emph{Step 1} of the proof of Theorem \ref{2-thm-1}. We have
  \begin{equation}\label{proof.5.0}
  \begin{split}
  I_1&= 2\,\E\int_0^t \frac{\big\langle Z_s, b^1_s(Y^1_s)-b^1_s(Y^2_s)\big\rangle} {\rho(|Z_s|^2)+\delta^2}\,\d s +2\,\E\int_0^t \frac{\big\langle Z_s, b^1_s(Y^2_s)-b^2_s(Y^2_s)\big\rangle} {\rho(|Z_s|^2)+\delta^2}\,\d s\\
  &=: I_{1,1}+I_{1,2}.
  \end{split}
  \end{equation}
For $I_{1,2}$, since $\rho(s)\geq s\geq 0$, we have
  \begin{equation}\label{proof.5.5}
  \begin{split}
  I_{1,2}&\leq 2\, \E\int_0^t\frac{|b_s^1(Y^2_s)-b^2_s(Y^2_s)|}{\sqrt{\rho(|Z_s|^2)+\delta^2}}\,\d s \\
  &\leq  \frac{2}{\delta} \int_0^t\!\! \int_{\R^d} |b_s^1(x)-b^2_s(x)|u^2_s(x)\,\d x \d s\\
  &\leq \frac{2}{\delta}\|u^2\|_{L^\infty(L^\infty)}\|b^1-b^2\|_{L^1(L^1)}.
  \end{split}
  \end{equation}
Next,
  \begin{equation}\label{proof.5}
  \begin{split}
  I_{1,1} &= 2\,\E\int_0^t \frac{\big\<Z_s, b^{1,\eps}_s(Y^1_s)- b^{1,\eps}_s(Y^2_s)\big\>} {\rho(|Z_s|^2)+\delta^2}\,\d s\\
  &\hskip13pt + 2\,\E\int_0^t \frac{\big\<Z_s,b^1_s(Y^1_s) -b^{1,\eps}_s(Y^1_s)\big\>  +\big\<Z_s,b^{1,\eps}_s(Y^2_s) - b^1_s(Y^2_s)\big\>}{\rho(|Z_s|^2)+\delta^2}\,\d s\\
  &=: I_{1,1,1}+I_{1,1,2}.
  \end{split}
  \end{equation}
Using again the fact that $\rho(s)\geq s\geq 0$, we have
  \begin{equation}\label{proof.5+1}
  \begin{split}
  I_{1,1,2} &\leq 2\sum_{i=1}^2 \E\int_0^t \frac{\big|b^{1,\eps}_s(Y^i_s) -b^1_s(Y^i_s)\big|}{\sqrt{\rho(|Z_s|^2)+\delta^2}} \,\d s\\
  &\leq \frac 2{\delta} \sum_{i=1}^2 \int_0^t\! \int_{\R^d} |b^{1,\eps}_s(x)-b^1_s(x)|u^i_s(x)\,\d x \d s\\
  &\leq \frac 2{\delta} \sum_{i=1}^2 \|u^i\|_{L^\infty(L^\infty)} \int_0^t\|b^{1,\eps}_s- b^1_s\|_{L^1}\,\d s.
  \end{split}
  \end{equation}
Since $b^1\in L^1(0,T; L^1(\R^d,\R^d))$, by the dominated convergence theorem, the right hand side of \eqref{proof.5+1} tends to 0 as $\varepsilon\rightarrow0$.

Now, we deal with the term $I_{1,1,1}$ and we shall use the hypothesis $(\mathbf{H})$. For any $s\in [0,T]$, there exists a negligible set $N_s\subset \R^d$ such that for all $x,y\in \R^d\setminus N_s$, we have
  $$|\<x-y,b^1_s(x)-b^1_s(y)\>| \leq \big(g_s(x) + g_s(y)\big) \rho(|x-y|^2).$$
Fix any $x_0, y_0\in\R^d$. Note that $(x_0-N_s)\cup (y_0-N_s)$ is a negligible set. For any $z\notin (x_0-N_s)\cup (y_0-N_s)$, one has $x_0-z\notin N_s$ and $y_0-z\notin N_s$. Thus
  \begin{equation}\label{proof.5+2}
  \begin{split}
  |\<x_0-y_0, b^{1,\eps}_s(x_0)- b^{1,\eps}_s(y_0)\>| &\leq \int_{\R^d} |\<x_0-y_0, b^1_s(x_0-z)- b^1_s(y_0-z)\>|\chi_\eps(z)\,\d z\\
  &\leq \int_{\R^d} \big(g_s(x_0-z) + g_s(y_0-z)\big) \rho(|x_0-y_0|^2) \chi_\eps(z)\,\d z\\
  &= \big(g_s^\eps(x_0) + g_s^\eps(y_0)\big) \rho(|x_0-y_0|^2),
  \end{split}
  \end{equation}
where $g_s^\eps =g_s\ast\chi_\eps$. Consequently,
  \begin{align*}
  I_{1,1,1}&\leq 2\,\E \int_0^t \big[g^\eps_s(Y^1_s)+ g^\eps_s(Y^2_s)\big]\,\d s.
  \end{align*}
Then, analogous to the above calculations,
  \begin{align*}
  I_{1,1,1}&\leq 2\sum_{i=1}^2 \int_0^{t}\!\! \int_{\R^d} g^\eps_s(x) u^i_s(x)\,\d x\d s \leq 2 \|g\|_{L^1(L^1)} \sum_{i=1}^2 \big\|u^i \big\|_{L^\infty(L^\infty)}.
  \end{align*}
This estimate together with \eqref{proof.5.0}--\eqref{proof.5+1} yields
  \begin{equation}\label{proof.7}
  I_1\leq \frac{2}{\delta}\|u^2\|_{L^\infty(L^\infty)}\|b^1-b^2\|_{L^1(L^1)} + 2\|g\|_{L^1(L^1)} \sum_{i=1}^2 \big\|u^i \big\|_{L^\infty(L^\infty)}.
  \end{equation}

\emph{Step 2.} We  deal now with the term $I_2$.
  \begin{equation}\label{proof.2.5}
  \begin{split}
  I_2&\leq 2\,\E \int_0^t \frac{\big\|\sigma^1_s(Y^1_s) -\sigma^1_s(Y^2_s)\big\|^2} {\rho(|Z_s|^2)+\delta^2}\,\d s + 2\,\E \int_0^t \frac{\big\|\sigma^1_s(Y^2_s) -\sigma^2_s(Y^2_s)\big\|^2} {\rho(|Z_s|^2)+\delta^2}\,\d s\\
  &=: I_{2,1}+I_{2,2}.
  \end{split}
  \end{equation}
Analogous to \eqref{2-thm-1.10}, we have
  \begin{equation}\label{proof.2.5.1}
  I_{2,2} \leq \frac{2}{\delta^2}\|u^2\|_{L^\infty(L^\infty)} \|\sigma^1 -\sigma^2\|_{L^2(L^2)}^2.
  \end{equation}

Let $\sigma^{1,\eps}=\sigma^1\ast\chi_\varepsilon$ be as above. For any $\eps>0$, we have
  \begin{equation}\label{proof.3}
  \begin{split}
  I_{2,1}&\leq 6\,\E \int_0^t \frac{\big\|\sigma^{1,\eps}_s(Y^1_s) -\sigma^{1,\eps}_s(Y^2_s)\big\|^2} {\rho(|Z_s|^2)+\delta^2}\,\d s\\
  &\hskip13pt +6\,\E \int_0^t \frac{\big\|\sigma^{1,\eps}_s(Y^1_s) -\sigma^1_s(Y^1_s)\big\|^2 +\big\|\sigma^{1,\eps}_s(Y^2_s)
  -\sigma^1_s(Y^2_s)\big\|^2} {\rho(|Z_s|^2)+\delta^2}\,\d s\\
  &=: I_{2,1,1}+I_{2,1,2}.
  \end{split}
  \end{equation}
The estimation of $I_{2,1,2}$ is similar as before:
  \begin{equation}\label{proof.3.0}
  \begin{split}
  I_{2,1,2}&\leq \frac 6{\delta^2}\sum_{i=1}^2\E\int_0^t \big\|\sigma^{1,\eps}_s(Y^i_s) -\sigma^1_s(Y^i_s)\big\|^2 \,\d s\\
  &= \frac 6{\delta^2}\sum_{i=1}^2 \int_0^t\!\! \int \|\sigma^{1,\eps}_s(x) -\sigma^1_s(x)\|^2 u^i_s(x)\,\d x \d s\\
  &\leq \frac {6}{\delta^2}\sum_{i=1}^2\|u^i\|_{L^\infty(L^\infty)} \int_0^t \|\sigma^{1,\eps}_s -\sigma^1_s\|^2_{L^2}\,\d s,
  \end{split}
  \end{equation}
which vanishes as $\eps\to 0$, since $\sigma^1\in L^2(0,T; L^{2}(\R^d,\mathcal{M}_{d,m}))$. Next, similar to \eqref{proof.5+2}, we have
  $$\|\sigma^{1,\eps}_s(x)- \sigma^{1,\eps}_s(x)\|^2\leq \big(g_s^\eps(x) + g_s^\eps(y)\big) \rho(|x-y|^2), \quad \mbox{for all } x,y\in \R^d.$$
Thus,
  \begin{align*}
  I_{2,1,1}&\leq 6\,\E \int_0^t \big[ g_s^\eps(Y^1_s) + g_s^\eps(Y^2_s)\big] \,\d s.
  \end{align*}
Recall that $Y^i_s$ is distributed as $u^i_s(x)\,\d x,\,i=1,2$. Consequently,
  \begin{align*}
  I_{2,1,1}&\leq 6\int_0^t \int_{\R^d} g_s^\eps(x) \big(u^1_s(x) +u^2_s(x)\big)\,\d x \d s \leq 6\sum_{i=1}^2\big\|u^i\big\|_{L^\infty(L^\infty)} \|g\|_{L^1(L^1)}.
  \end{align*}
Note that the upper bound is independent of $\eps>0$. Combining the above estimate with \eqref{proof.2.5}--\eqref{proof.3.0}, and letting $\eps\to 0$ on the right hand side of \eqref{proof.3}, we obtain
  \begin{equation}\label{proof.4}
  I_2\leq 6\sum_{i=1}^2\big\|u^i\big\|_{L^\infty(L^\infty)} \|g\|_{L^1(L^1)} + \frac{2}{\delta^2}\|u^2\|_{L^\infty(L^\infty)} \|\sigma^1 -\sigma^2\|_{L^2(L^2)}^2.
  \end{equation}

\emph{Step 3.} Combining \eqref{proof.2}, \eqref{proof.7} and \eqref{proof.4}, we finally obtain
  \begin{eqnarray*}
  \D_{\psi_\delta} (\mu^1_t, \mu^2_t)
  &\leq& \D_{\psi_\delta} (\mu^1_0, \mu^2_0) + 8\|g\|_{L^1(L^1)} \sum_{i=1}^2 \big\|u^i \big\|_{L^\infty(L^\infty)}\\
  &&+ 2\|u^2\|_{L^\infty(L^\infty)} \Big(\frac1\delta \|b^1-b^2\|_{L^1(L^1)}+ \frac1{\delta^2}\big\|\sigma^1 -\sigma^2\big\|^2_{L^2( L^2)}\Big).
  \end{eqnarray*}
This finishes the proof.

\subsection{Proofs of the other results}

\begin{proof}[Proof of Corollary \ref{cor-uniqueness}]
For $\delta=1/n$, we can find $\pi_n\in \mathcal C(\mu^1_t, \mu^2_t)$ such that
  \begin{equation}\label{cor-uniqueness.2}
  \tilde\D_{1/n}(\mu^1_t, \mu^2_t)= \int_{\R^d\times \R^d} \log\big(1+n^2|x-y|^2\big)\,\d\pi_n(x,y).
  \end{equation}
Since $\{\pi_n:n\geq 1\}\subset \mathcal C(\mu^1_t, \mu^2_t)$, it is clear that the family $\{\pi_n:n\geq 1\}$ is relatively compact with respect to the weak convergence. Up to a subsequence, we can assume that $\pi_n$ converges weakly to some probability measure $\pi_0$ on $\R^d\times \R^d$. It is easy to see that $\pi_0\in \mathcal C(\mu^1_t, \mu^2_t)$. We shall show that $\pi_0$ is supported on the diagonal of $\R^d\times \R^d$.

Fix an arbitrary $\kappa>0$. We define $E_\kappa=\{(x,y)\in \R^d\times \R^d: |x-y|>\kappa\}$ which is an open subset of $\R^d\times \R^d$. Summarizing the assertions of Theorems \ref{2-thm-1} and \ref{2-thm-2}, we can find a constant $C>0$ such that
  $$\int_{\R^d\times \R^d} \log\big(1+n^2|x-y|^2\big)\,\d \pi_n(x,y)= \tilde\D_{1/n}(\mu^1_t, \mu^2_t)\leq C[1+\phi(1/n)],$$
where $\phi(\delta) = o(|\log \delta|)$ as $\delta\to 0$. Therefore,
  $$\pi_n(E_\kappa) \log (1+n^2\kappa^2)\leq \int_{E_\kappa} \log\big(1+n^2|x-y|^2\big)\,\d\pi_n(x,y) \leq C[1+\phi(1/n)].$$
As $\pi_n$ converges weakly to $\pi_0$, we have
  $$\pi_0(E_\kappa)\leq \liminf_{n\to \infty} \pi_n(E_\kappa) \leq \liminf_{n\to \infty} \frac{C[1+\phi(1/n)]}{\log (1+n^2\kappa^2)}=0.$$
The arbitrariness of $\kappa>0$ implies that $\pi_0$ is supported on the diagonal of $\R^d\times \R^d$, i.e.,  for $\pi_0$-a.e. $(x,y)\in \R^d\times \R^d$, one has $x=y$. Now for any $\phi\in C_b(\R^d)$,
  $$\int_{\R^d} \phi(x)\,\d\mu^1_t(x)= \int_{\R^d\times \R^d} \phi(x)\,\d\pi_0(x,y) =\int_{\R^d\times \R^d} \phi(y)\,\d\pi_0(x,y) =\int_{\R^d} \phi(y)\,\d\mu^2_t(y),$$
and hence $\mu^1_t=\mu^2_t$.
\end{proof}

\begin{proof}[Proof of Proposition \ref{prop-zero-diffusivity}]
Recall that $C_{q,T}$ is the right hand side of \eqref{zero-diffusivity.1}. Since $\rho^\kappa_0=\rho^0_0=\bar\rho$, by Theorem \ref{2-thm-1},
  $$\tilde\D_\delta(\rho^\kappa_t,\rho^0_t)\leq \frac{2\kappa T}{\delta^2} C_{q,T}\|\sigma\|_{L^{2p}}^2 + 2 C_{d,p}C_{q,T}\|\nabla b\|_{L^1(L^p)}.$$
Therefore, taking $\delta=\sqrt\kappa$ leads to
  $$\tilde\D_{\sqrt\kappa}(\rho^\kappa_t,\rho^0_t)\leq 2 C_{q,T}\big[T\|\sigma\|_{L^{2p}}^2 + C_{d,p} \|\nabla b\|_{L^1(L^p)}\big],$$
for any $t\in[0,T]$. The proof is complete.
\end{proof}

We conclude the section by providing the

\begin{proof}[Proof of Theorem \ref{thm-variant}]
The proof is almost the same as that of  Theorem \ref{2-thm-1}, the only difference being that we apply H\"older's inequality to the diffusion and the drift parts with different exponents. More precisely, \eqref{2-thm-1.4} becomes
  \begin{equation*}
  I_{1,2}\leq \frac2\delta \int_0^t\! \|b^1_s -b^2_s\|_{L^{p_2}}\|u^2_s\|_{L^{p'_2}}\,\d s \leq \frac2\delta \|u^2\|_{L^\infty(L^{p'_2})} \|b^1-b^2\|_{L^1(L^{p_2})},
  \end{equation*}
where $p'_2$ is the conjugate number of $p_2$. Similarly, we rewrite \eqref{2-thm-1.6} as
  $$\aligned  I_{1,1,1}&\leq 2C_d \int_0^t \big\| M|\nabla b^{1,\eps}_s|\big\|_{L^{p_2}} \big(\|u^1_s\|_{L^{p'_2}} +\|u^2_s\|_{L^{p'_2}}\big)\,\d s\\
  &\leq C_{d,p_2}\bigg(\sum_{i=1}^2\|u^i_s\|_{L^\infty(L^{p'_2})}\bigg) \big\| \nabla b^1\big\|_{L^1(L^{p_2})},
  \endaligned$$
which leads to
  \begin{equation*}
   I_{1,1}\leq C_{d,p_2}\bigg(\sum_{i=1}^2\|u^i_s\|_{L^\infty(L^{p'_2})}\bigg) \big\| \nabla b^1\big\|_{L^1(L^{p_2})}.
  \end{equation*}
Combining the above estimates, we obtain
  \begin{equation}\label{thm-variant.1}
  I_1\leq \frac2\delta \|u^2\|_{L^\infty(L^{p'_2})} \|b^1-b^2\|_{L^1(L^{p_2})} + C_{d,p_2}\bigg(\sum_{i=1}^2\|u^i\|_{L^\infty(L^{p'_2})}\bigg) \big\| \nabla b^1\big\|_{L^1(L^{p_2})}.
  \end{equation}
In a similar way as Step 2 in the proof of Theorem \ref{2-thm-1}, we have
  \begin{equation}\label{thm-variant.2}
  I_2\leq \frac2{\delta^2}\|u^2\|_{L^\infty(L^{p'_1})}\|\sigma^1-\sigma^2\|_{L^2(L^{2p_1})}^2 + C_{d,p_1} \bigg(\sum_{i=1}^2\|u^i\|_{L^\infty(L^{p'_1})}\bigg) \big\| \nabla \sigma^1\big\|_{L^2(L^{2p_1})}^2.
  \end{equation}
Since the rest of the proof is the same, we omit it.
\end{proof}

\section{Proofs of results in Section 2.2}

This section is devoted to proving Theorems \ref{thm-LPS} and \ref{thm-LPS-Wass} for which we need some preparations. Consider the Fokker--Planck equation associated to \eqref{Ito-SDE-2}:
  \begin{equation}\label{FPE-LPS-1}
  \partial_t\nu_t-\frac12\sum_{ij}\partial_{ij} (\nu_t \tilde a_{ij} )+\div(\nu_t \tilde b)=0, \quad \nu|_{t=0}= \mbox{law}(Y_0),
  \end{equation}
where $\tilde a=\tilde \sigma\tilde\sigma^\ast$ with $\tilde\sigma$ and $\tilde b$ defined in \eqref{Ito-SDE-2.1}. We have the following simple result.

\begin{lemma}\label{lem-LPS}
Let $\psi_t$ be defined as in \eqref{Zvonkin.1}. Then the solutions of \eqref{FPE-LPS} and \eqref{FPE-LPS-1} have the following relations:
  $$\nu_t=(\psi_t)_\# \mu_t,\quad \mu_t=(\psi_t^{-1})_\# \nu_t.$$
Moreover, if $\d\mu_t=u_t\,\d x$ with $u\in L^\infty(0,T; L^r(\R^d))\, (r\in [1,\infty])$, then $\d\nu_t= v_t\,\d x$  and for some positive constant $C$,
  $$C^{-1} \|u\|_{L^\infty(L^r)}\leq \|v\|_{L^\infty(L^r)} \leq C \|u\|_{L^\infty(L^r)}.$$
\end{lemma}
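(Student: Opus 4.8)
The plan is to transfer the whole statement to the level of the diffusion processes by means of the superposition principle, and then to read off the claim about densities via a change of variables. Throughout, $\psi_t=\mathrm{Id}+\phi_t$ is the map in \eqref{Zvonkin.1}, $\lambda$ is fixed large enough that $\sup_{t\in[0,T]}\|\nabla\phi_t\|_{L^\infty}\le 1/2$, and I will freely use the facts recorded above: $\psi_t$ and $\psi_t^{-1}$ are $C^1$ diffeomorphisms with derivatives bounded uniformly in $t$; the coefficients $\tilde\sigma,\tilde b$ of \eqref{Ito-SDE-2.1} are bounded (since $\phi$ and $\nabla\phi$ are); and $Y_t=\psi_t(X_t)$ solves \eqref{Ito-SDE-2} whenever $X_t$ solves \eqref{Ito-SDE-1}, the inverse Zvonkin change of variables $X_t=\psi_t^{-1}(Y_t)$ going the other way.

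For the first assertion I would proceed as follows. Let $(\mu_t)$ be a weak solution of \eqref{FPE-LPS}. By the superposition principle (Proposition \ref{prop-Trevisan}) there is a martingale solution of \eqref{Ito-SDE-1} with the prescribed marginals, and by Proposition \ref{2-prop-1} a weak solution $(\Omega,\mathcal G,(\mathcal G_t),P;X,W)$ of \eqref{Ito-SDE-1} with $\mathrm{law}(X_t)=\mu_t$ for every $t\in[0,T]$. Set $Y_t=\psi_t(X_t)$; then $Y$ solves \eqref{Ito-SDE-2}, its time-$t$ marginal is $\mathrm{law}(Y_t)=(\psi_t)_\#\mu_t$, and since $\tilde a=\tilde\sigma\tilde\sigma^\ast$ and $\tilde b$ are bounded the integrability condition \eqref{integrability} holds trivially for this curve; applying It\^o's formula to $\varphi(Y_t)$ for $\varphi\in C_c^{1,2}$ and taking expectations --- exactly as in the remark following the definition of a martingale solution --- shows that $(\psi_t)_\#\mu_t$ is a weak solution of \eqref{FPE-LPS-1}. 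As it starts from $(\psi_0)_\#\mu_0=\nu_0$, it coincides with $\nu_t$, so that $\nu_t=(\psi_t)_\#\mu_t$ and, $\psi_t$ being a bijection, $\mu_t=(\psi_t^{-1})_\#\nu_t$. The converse implication --- that $(\psi_t^{-1})_\#\nu_t$ solves \eqref{FPE-LPS} whenever $(\nu_t)$ solves \eqref{FPE-LPS-1} --- follows by running the same argument with \eqref{FPE-LPS-1} and the inverse transform $X_t=\psi_t^{-1}(Y_t)$; together these give the stated bijective correspondence.

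For the density statement, assume $\d\mu_t=u_t\,\d x$. The change of variables $y=\psi_t(x)$ in $\int f\,\d\nu_t=\int f(\psi_t(x))u_t(x)\,\d x$ shows that $\nu_t$ is absolutely continuous with density $v_t(y)=u_t(\psi_t^{-1}(y))\,|\!\det\nabla\psi_t^{-1}(y)|$, and $v$ is jointly measurable. Since $\nabla\psi_t=\mathrm{Id}+\nabla\phi_t$ with $\|\nabla\phi_t\|_{L^\infty}\le 1/2$, the singular values of $\nabla\psi_t(x)$ all lie in $[1/2,3/2]$, whence $\det\nabla\psi_t(x)\in[2^{-d},(3/2)^d]$ and $|\!\det\nabla\psi_t^{-1}(y)|\in[(3/2)^{-d},2^d]$, uniformly in $t$. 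For $r\in[1,\infty)$, undoing the change of variables once more yields
$$\|v_t\|_{L^r}^r=\int\big|u_t(\psi_t^{-1}(y))\big|^r\,\big|\!\det\nabla\psi_t^{-1}(y)\big|^r\,\d y\le 2^{dr}\int|u_t(x)|^r\,|\!\det\nabla\psi_t(x)|\,\d x\le 2^{dr}(3/2)^d\,\|u_t\|_{L^r}^r,$$
and symmetrically, starting from $u_t(x)=v_t(\psi_t(x))\,|\!\det\nabla\psi_t(x)|$, the reverse inequality; the case $r=\infty$ is immediate because $\psi_t^{-1}$ is a bijection. Taking $\sup_{t\in[0,T]}$ gives $C^{-1}\|u\|_{L^\infty(L^r)}\le\|v\|_{L^\infty(L^r)}\le C\|u\|_{L^\infty(L^r)}$ with $C$ depending only on $d$ (and on the fixed $\lambda$).

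The one step I expect to require real care is the uniform two-sided control of the Jacobian $\det\nabla\psi_t$: the bound away from $0$ (equivalently, boundedness of $\nabla\psi_t^{-1}$) is precisely what forces the choice of $\lambda$ large enough that $\|\nabla\phi_t\|_{L^\infty}\le 1/2$, and the norm equivalence would fail without it. A comparatively minor point is to confirm that $t\mapsto\mathrm{law}(Y_t)$ solves \eqref{FPE-LPS-1} in the exact sense of \eqref{FPE-1}, but here there is nothing hidden: the boundedness of $\tilde\sigma$ and $\tilde b$ makes \eqref{integrability} automatic and fully legitimizes the It\^o-formula computation.
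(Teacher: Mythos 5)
Your proposal is correct and follows essentially the same route as the paper: identify the solutions of \eqref{FPE-LPS} and \eqref{FPE-LPS-1} with the time marginals of the SDE solutions related by the Zvonkin map $Y_t=\psi_t(X_t)$, and then obtain the density relation $v_t(y)=u_t(\psi_t^{-1}(y))\,|\det(\nabla\psi_t^{-1}(y))|$ together with the $L^r$ norm equivalence from the change of variables and the uniform boundedness of $\nabla\psi_t$ and $\nabla\psi_t^{-1}$. Your additional details (the explicit invocation of Proposition \ref{prop-Trevisan} and Proposition \ref{2-prop-1}, the quantitative Jacobian bounds coming from $\|\nabla\phi_t\|_{L^\infty}\leq 1/2$, and the implicit appeal to uniqueness for the transformed equation when you conclude that the pushforward curve coincides with $\nu_t$) only spell out what the paper leaves implicit.
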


\begin{proof}
Note that $\mu_t=\mbox{law}(X_t)$ and  $\nu_t=\mbox{law}(Y_t)$, where $X_t$ and $Y_t$ are respectively the solutions to the SDEs \eqref{Ito-SDE-1} and \eqref{Ito-SDE-2}. We have $Y_t=\psi_t(X_t)$, and hence for any $f\in C_b(\R^d)$,
  $$\int_{\R^d} f\,\d\nu_t= \E f(Y_t)=\E f(\psi_t(X_t)) =\int_{\R^d} f\circ\psi_t\,\d\mu_t = \int_{\R^d} f\, \d[(\psi_t)_\# \mu_t].$$
This implies the first identity. The second one can be proved analogously.

Next, it is easy to see that $v_t(y)=u_t(\psi_t^{-1}(y)) |\det(\nabla \psi_t^{-1}(y))|$. Recall that $\psi_t^{-1}$  has bounded first derivatives, uniformly in $t\in[0,T]$, hence the last assertion is obvious for $r=\infty$. If $r<\infty$, then
\begin{eqnarray*}\int_{\R^d} |v_t(y)|^r\,\d y&=&\int_{\R^d} |u_t(\psi_t^{-1}(y))|^r |\det(\nabla \psi_t^{-1}(y))|^r\,\d y\\
 &=& \int_{\R^d} |u_t(x)|^r |\det(\nabla \psi_t^{-1}(\psi_t(x)))|^{r-1}\,\d x,
 \end{eqnarray*}
where we have used the fact that $\det(\nabla \psi_t^{-1}(\psi_t(x))) \det(\nabla \psi_t(x))=1$ in the last equality.  Thus, $\|v\|_{L^\infty(L^r)} \leq C_r \|u\|_{L^\infty(L^r)}$. Similarly, we can prove the other inequality.
\end{proof}

Suppose we are given two vector fields $b^1, b^2\in L^q(0,T; L^p(\R^d,\R^d))$ with $p>2$ and $q>2$ such that $\frac dp +\frac 2q<1$. For each $i\in \{1,2\}$, let $\phi^i_t$ be the solution to \eqref{Zvonkin} with $b=b^i$, i.e.,
  \begin{equation}\label{5-LPS.1}
  \partial_t \phi^i_t+\frac12 \Delta \phi^i_t + b^i_t\cdot\nabla \phi^i_t-\lambda \phi^i_t = -b^i_t,\quad \phi^i_T(x)=0\in \R^d,
  \end{equation}
and define
  $$\psi^i_t(x)= x+\phi^i_t(x),\quad x\in \R^d,$$
which further gives us $\tilde\sigma^i$ and $\tilde b^i$ as in \eqref{Ito-SDE-2.1}, namely,
  \begin{equation*}
  \tilde\sigma_t^i(y)={\rm Id} + (\nabla \phi^i_t)\circ (\psi^i_t)^{-1}(y),\quad \tilde b^i_t(y)=\lambda \phi^i_t\circ (\psi^i_t)^{-1}(y),\quad y\in\R^d.
  \end{equation*}
By taking $\lambda>0$ big enough in \eqref{5-LPS.1}, we may assume (see e.g. \cite[Lemma 3.4]{Flandoli})
  \begin{equation}\label{LPS-0}
  \sup_{t\in [0,T]} \big(\|\nabla\phi^1_t\|_{L^\infty} \vee \|\nabla\phi^2_t\|_{L^\infty}\big)\leq \frac12.
  \end{equation}

\begin{lemma}\label{lem-LPS-0}
There exists some constant $C_{p,q,T}>0$ such that
  $$\sup_{t\in[0,T]} \|\phi^1_t -\phi^2_t\|_{W^{1,p}}\leq C_{p,q,T} \|b^1-b^2\|_{L^q(L^p)}.$$
\end{lemma}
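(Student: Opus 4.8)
The plan is to write down the linear backward parabolic equation satisfied by the difference $w_t := \phi^1_t - \phi^2_t$, apply to it the maximal-regularity estimate \eqref{regularity} that underlies the construction of the Zvonkin transform, and then pass from the resulting bound in $H^q_{2,p}(T)$ to the claimed uniform-in-time bound in $W^{1,p}$ by a standard parabolic embedding.

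First I would subtract the two equations \eqref{5-LPS.1} for $i=1,2$ and rewrite the first-order terms via $b^1_t\cdot\nabla\phi^1_t - b^2_t\cdot\nabla\phi^2_t = b^1_t\cdot\nabla w_t + (b^1_t - b^2_t)\cdot\nabla\phi^2_t$, so that $w$ solves
\begin{equation*}
\partial_t w_t + \frac12\Delta w_t + b^1_t\cdot\nabla w_t - \lambda w_t = -\big({\rm Id} + \nabla\phi^2_t\big)(b^1_t - b^2_t), \qquad w_T = 0,
\end{equation*}
where ${\rm Id}$ is the $d\times d$ identity and $\nabla\phi^2_t$ is the Jacobian matrix. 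This is an equation of exactly the type \eqref{Zvonkin}, with drift $b^1$ and damping $\lambda$, but with the source $-b^1$ replaced by $f_t := -({\rm Id} + \nabla\phi^2_t)(b^1_t - b^2_t)$ and with zero terminal data. The linear estimate behind \eqref{regularity} is insensitive to the particular right-hand side, so for $\lambda$ large one obtains $\|\partial_t w\|_{L^q(L^p)} + \|w\|_{L^q(W^{2,p})} \le C\|f\|_{L^q(L^p)}$, with $C$ depending only on $d,p,q,T,\lambda$ and $\|b^1\|_{L^q(L^p)}$; see \cite{Krylov,Flandoli}.

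Next I would bound the source. Since $\sup_{t\in[0,T]}\|\nabla\phi^2_t\|_{L^\infty} \le \frac12$ by \eqref{LPS-0}, we have the pointwise estimate $|f_t(x)| \le (1+\|\nabla\phi^2_t\|_{L^\infty})\,|b^1_t(x) - b^2_t(x)| \le \frac32\,|b^1_t(x) - b^2_t(x)|$, hence $\|f\|_{L^q(L^p)} \le \frac32\|b^1 - b^2\|_{L^q(L^p)}$, and therefore $\|w\|_{H^q_{2,p}(T)} \le C\|b^1 - b^2\|_{L^q(L^p)}$. Finally, because $\frac dp + \frac2q < 1$ (in particular $q > 2$), the parabolic space $H^q_{2,p}(T)$ embeds continuously into $C([0,T]; W^{1,p}(\R^d,\R^d))$ — this is the same embedding that yields \eqref{LPS-0}, which in fact gives $C([0,T]; C^{1,\beta})$ for $\beta = 1 - \frac dp - \frac2q > 0$; see \cite{Krylov,Flandoli}. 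Combining, $\sup_{t\in[0,T]}\|w_t\|_{W^{1,p}} \le C\|w\|_{H^q_{2,p}(T)} \le C_{p,q,T}\|b^1 - b^2\|_{L^q(L^p)}$, which is the claim.

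The algebra of the first step and the pointwise source bound are routine; the substantive ingredients are the $L^p$--$L^q$ maximal regularity for the perturbed heat operator with a general source and the time-continuity embedding, both of which are standard under the Ladyzhenskaya--Prodi--Serrin condition and have already been used in the excerpt to produce \eqref{regularity} and \eqref{LPS-0}. The only point needing a touch of care — and the main (minor) obstacle — is that \eqref{regularity} is stated for the specific source $-b$, so one must invoke its equally standard variant with an arbitrary right-hand side.
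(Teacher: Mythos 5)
Your proposal is correct and follows essentially the same route as the paper: subtract the two equations \eqref{5-LPS.1} to get the backward equation for $\varphi=\phi^1-\phi^2$ with source $-(b^1-b^2)(\nabla\phi^2+{\rm Id})$ and zero terminal data, apply the maximal-regularity estimate with general right-hand side, and bound the source by $\frac32\|b^1-b^2\|_{L^q(L^p)}$ via \eqref{LPS-0}. The only (immaterial) difference is the final step: the paper invokes \cite[Lemma 10.1]{Krylov} with $\beta=1$, giving $\|\varphi_t\|_{W^{1,p}}\leq C(T-t)^{\frac12-\frac1q}\|\varphi\|_{L^q(W^{2,p})}^{1/2}\|\partial_t\varphi\|_{L^q(L^p)}^{1/2}$ (using $\varphi_T=0$), whereas you use the standard embedding of $H^q_{2,p}(T)$ into $C([0,T];W^{1,p})$ (valid since $q>2$), which serves the same purpose.
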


\begin{proof}
Let $\varphi_t= \phi^1_t -\phi^2_t\ (0\leq t\leq T)$. Then
  $$\partial_t \varphi_t+\frac12 \Delta \varphi_t + b^1_t\cdot\nabla \varphi_t-\lambda \varphi_t = -(b^1_t -b^2_t)(\nabla\phi^2_t +{\rm Id} ),\quad \varphi_T(x)=0\in \R^d.$$
We have the following estimate which is analogous to \eqref{regularity}:
  \begin{equation}\label{regularity-1}
  \aligned
  \|\partial_t \varphi\|_{L^q(L^p)}+ \|\varphi\|_{L^q(W^{2,p})}&\leq C\|(b^1 -b^2)(\nabla\phi^2 +Id)\|_{L^q(L^p)}\\
  &\leq \tilde C \|b^1 -b^2\|_{L^q(L^p)},
  \endaligned
  \end{equation}
where the last inequality follows from \eqref{LPS-0}. Taking $\beta=1$ in \cite[Lemma 10.1]{Krylov}, we have
  $$\|\varphi_t\|_{W^{1,p}}\leq C(T-t)^{\frac12 -\frac1q} \|\varphi\|_{L^q(W^{2,p})}^{\frac12} \|\partial_t \varphi\|_{L^q(L^p)}^{\frac12} \leq \hat C (T-t)^{\frac12 -\frac1q} \|b^1 -b^2\|_{L^q(L^p)},$$
where we have used \eqref{regularity-1} in the last step. The proof is finished.
\end{proof}

The following estimates are crucial to the proofs of Theorems  \ref{thm-LPS} and \ref{thm-LPS-Wass}.

\begin{proposition}\label{prop-LPS}
We have
  $$\|\tilde b^1-\tilde b^2\|_{L^\infty(L^p)}\leq C \|b^1-b^2\|_{L^q(L^p)}$$
and
  $$\|\tilde \sigma^1-\tilde\sigma^2\|_{L^q(L^{p})}\leq C \|b^1-b^2\|_{L^q(L^p)}$$
for some constant $C>0$.
\end{proposition}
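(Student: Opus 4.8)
The plan is to unwind the definitions $\tilde b^i_t=\lambda\phi^i_t\circ(\psi^i_t)^{-1}$ and $\tilde\sigma^i_t={\rm Id}+(\nabla\phi^i_t)\circ(\psi^i_t)^{-1}$ and reduce everything to $\phi^1-\phi^2$, for which Lemma~\ref{lem-LPS-0} (and, for the diffusion part, the regularity bound \eqref{regularity}) is at hand. Two elementary observations will be used repeatedly. First, since each $\psi^i_t$ is a global $C^1$-diffeomorphism of $\R^d$ with $\nabla\psi^i_t={\rm Id}+\nabla\phi^i_t$ and $\sup_t\|\nabla\phi^i_t\|_{L^\infty}\le\frac12$ by \eqref{LPS-0}, every singular value of $\nabla\psi^i_t$ lies in $[\frac12,\frac32]$, so $2^{-d}\le|\det\nabla\psi^i_t|\le(3/2)^d$ pointwise; hence the change of variables $x=(\psi^i_t)^{-1}(y)$ gives, for every Borel $f\ge0$,
\[
\int_{\R^d}f\big((\psi^i_t)^{-1}(y)\big)\,\d y\le (3/2)^d\int_{\R^d}f(x)\,\d x .
\]
Second, writing $u=(\psi^1_t)^{-1}(y)$ and $v=(\psi^2_t)^{-1}(y)$, the identity $u+\phi^1_t(u)=y=v+\phi^2_t(v)$ yields $u-v=(\phi^2_t(v)-\phi^2_t(u))+(\phi^2_t(u)-\phi^1_t(u))$, and then $\|\nabla\phi^2_t\|_{L^\infty}\le\frac12$ gives $|u-v|\le\frac12|u-v|+|(\phi^1_t-\phi^2_t)(u)|$, that is
\[
\big|(\psi^1_t)^{-1}(y)-(\psi^2_t)^{-1}(y)\big|\le 2\,\big|(\phi^1_t-\phi^2_t)\big((\psi^1_t)^{-1}(y)\big)\big| .
\]

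For the drift I would split $\tilde b^1_t(y)-\tilde b^2_t(y)=\lambda[\phi^1_t(u)-\phi^2_t(u)]+\lambda[\phi^2_t(u)-\phi^2_t(v)]$. The $L^p_y$-norm of the first bracket is at most $C\|\phi^1_t-\phi^2_t\|_{L^p}$ by the change of variables; for the second bracket, $\|\nabla\phi^2_t\|_{L^\infty}\le\frac12$ and the pointwise comparison give $|\phi^2_t(u)-\phi^2_t(v)|\le\frac12|u-v|\le|(\phi^1_t-\phi^2_t)(u)|$, again of $L^p_y$-norm at most $C\|\phi^1_t-\phi^2_t\|_{L^p}$. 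Hence $\|\tilde b^1_t-\tilde b^2_t\|_{L^p}\le C\lambda\|\phi^1_t-\phi^2_t\|_{W^{1,p}}$ for all $t$, and Lemma~\ref{lem-LPS-0} gives $\|\tilde b^1-\tilde b^2\|_{L^\infty(L^p)}\le C\|b^1-b^2\|_{L^q(L^p)}$.

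For the diffusion part I would split $\tilde\sigma^1_t-\tilde\sigma^2_t=[(\nabla\phi^1_t)-(\nabla\phi^2_t)](u)+[(\nabla\phi^2_t)(u)-(\nabla\phi^2_t)(v)]$. The first term has $L^p_y$-norm $\le C\|\nabla\phi^1_t-\nabla\phi^2_t\|_{L^p}\le C\|\phi^1_t-\phi^2_t\|_{W^{1,p}}$ by the change of variables. The second term is the crux: $\nabla\phi^2_t$ is only $W^{1,p}$, so in place of a Lipschitz bound I would use the maximal-function characterization \eqref{2-lem-maximal-funct.1} — applicable since $\phi^2_t\in W^{2,p}(\R^d)$ with $p>d$ (which follows from \eqref{LPS}, as $\frac dp<1-\frac2q<1$), so $\nabla\phi^2_t$ is continuous — to obtain $|(\nabla\phi^2_t)(u)-(\nabla\phi^2_t)(v)|\le C_d|u-v|\big(M|\nabla^2\phi^2_t|(u)+M|\nabla^2\phi^2_t|(v)\big)$. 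Since $p>d$, the embedding $W^{1,p}(\R^d)\hookrightarrow L^\infty(\R^d)$ together with the pointwise comparison above yields $|u-v|\le C\|\phi^1_t-\phi^2_t\|_{W^{1,p}}=:C\eps_t$ uniformly in $y$; pulling this $L^\infty$ factor out, changing variables, and applying the maximal inequality \eqref{2-lem-maximal-funct.2} to $\nabla^2\phi^2_t\in L^p$ bounds the $L^p_y$-norm of the second term by $C\eps_t\|\nabla^2\phi^2_t\|_{L^p}$. Thus $\|\tilde\sigma^1_t-\tilde\sigma^2_t\|_{L^p}\le C(1+\|\phi^2_t\|_{W^{2,p}})\eps_t$ for a.e.\ $t$; taking the $L^q(0,T)$-norm in $t$ and using $\sup_t\eps_t\le C\|b^1-b^2\|_{L^q(L^p)}$ (Lemma~\ref{lem-LPS-0}) together with $\|\phi^2\|_{L^q(W^{2,p})}\le C\|b^2\|_{L^q(L^p)}$ (estimate \eqref{regularity}) gives $\|\tilde\sigma^1-\tilde\sigma^2\|_{L^q(L^p)}\le C\|b^1-b^2\|_{L^q(L^p)}$, with $C$ now additionally depending on $\|b^2\|_{L^q(L^p)}$.

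The main obstacle is precisely this second diffusion term: a naive Lipschitz estimate is unavailable because $\nabla\phi^2_t\notin W^{1,\infty}$ in general, while the natural Hölder bound of exponent $1-d/p$ would force control of $\|\phi^1_t-\phi^2_t\|_{L^{p-d}}$, which cannot be had on $\R^d$. Routing through the maximal-function inequality, and using $p>d$ to extract an $L^\infty$-bound on $|u-v|$ so that only \emph{one} power of $u-v$ is paired with $\nabla^2\phi^2_t$, resolves this; it is also the reason the diffusion estimate is stated in the $L^q(L^p)$-norm — the second derivatives of $\phi^2$ being only $L^q$ in time — rather than in $L^\infty(L^p)$ as for the drift.
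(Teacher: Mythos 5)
Your proposal is correct and follows essentially the same route as the paper: split $\tilde b^1-\tilde b^2$ and $\tilde\sigma^1-\tilde\sigma^2$ into a ``same composition point'' difference handled by change of variables (with Jacobian bounded via \eqref{LPS-0}) and a ``same function, two composition points'' difference handled through the comparison $|(\psi^1_t)^{-1}-(\psi^2_t)^{-1}|\leq 2|(\phi^1_t-\phi^2_t)\circ(\psi^i_t)^{-1}|$, using the maximal-function characterization \eqref{2-lem-maximal-funct.1} together with the maximal inequality \eqref{2-lem-maximal-funct.2} for the diffusion part, and closing with Lemma \ref{lem-LPS-0} and \eqref{regularity}. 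The swap of indices (you place the Lipschitz/maximal-function weight on $\phi^2$ where the paper uses $\phi^1$) is immaterial. The one genuine divergence is in how the uniform-in-space bound on $\phi^1_t-\phi^2_t$ is extracted before pairing a single power of $|u-v|$ with $M|\nabla^2\phi_t|$: the paper invokes Krylov's interpolation lemma (\cite[Lemma 10.2(i)]{Krylov}) applied to $\phi^1-\phi^2$ together with \eqref{regularity-1}, giving $\sup_{t,y}|\phi^1_t(y)-\phi^2_t(y)|\leq C\|b^1-b^2\|_{L^q(L^p)}$, whereas you use Morrey's embedding $W^{1,p}(\R^d)\hookrightarrow L^\infty(\R^d)$ (legitimate, since \eqref{LPS} forces $p>d$) combined with the uniform-in-time $W^{1,p}$ bound of Lemma \ref{lem-LPS-0}; both yield the same sup bound, and your variant is slightly more self-contained since it avoids a second appeal to Krylov's lemma, at the harmless cost of the constant depending on $\|b^2\|_{L^q(L^p)}$ rather than $\|b^1\|_{L^q(L^p)}$.
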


\begin{proof}
Note that $\tilde b^i_t(x) = \lambda \phi^i_t((\psi^i_t)^{-1}(x))\, (i=1,2)$. Then, by \eqref{LPS-0},
  \begin{equation}\label{prop-LPS.2}
  \aligned
  \|\tilde b^1_t- \tilde b^2_t\|_{L^p}&\leq \lambda\big\|\phi^1_t\circ (\psi^1_t)^{-1}-\phi^1_t \circ (\psi^2_t)^{-1}\big\|_{L^p} + \lambda\big\|\phi^1_t\circ (\psi^2_t)^{-1}-\phi^2_t \circ (\psi^2_t)^{-1}\big\|_{L^p}\\
  &\leq C\big\|(\psi^1_t)^{-1}- (\psi^2_t)^{-1}\big\|_{L^p} + C \big\|\phi^1_t -\phi^2_t\big\|_{L^p},
  \endaligned
  \end{equation}
where, for the second norm on the right hand side, we have used the change of variable formula and the fact that $\psi^2_t$ has bounded first derivatives. By the definition of $\psi^i_t$, we have
  $$y=\psi^i_t((\psi^i_t)^{-1}(y))= (\psi^i_t)^{-1}(y)+ \phi^i_t((\psi^i_t)^{-1}(y)),$$
thus
  \begin{equation}\label{prop-LPS.0}
  (\psi^i_t)^{-1}(y)=y- \phi^i_t((\psi^i_t)^{-1}(y)),\quad y\in \R^d,\ i=1,2.
  \end{equation}
As a result, by \eqref{LPS-0},
\begin{eqnarray*}
  &&|(\psi^1_t)^{-1}(y)-(\psi^2_t)^{-1}(y)|\\
  &\leq& |\phi^1_t((\psi^1_t)^{-1}(y)) -\phi^1_t((\psi^2_t)^{-1}(y))|+ |\phi^1_t((\psi^2_t)^{-1}(y)) -\phi^2_t((\psi^2_t)^{-1}(y))|\\
  &\leq& \frac12 |(\psi^1_t)^{-1}(y)-(\psi^2_t)^{-1}(y)| + |\phi^1_t((\psi^2_t)^{-1}(y)) -\phi^2_t((\psi^2_t)^{-1}(y))|.
  \end{eqnarray*}
Hence
  \begin{equation}\label{prop-LPS.0.5}
  |(\psi^1_t)^{-1}(y)-(\psi^2_t)^{-1}(y)|\leq  2|\phi^1_t((\psi^2_t)^{-1}(y)) -\phi^2_t((\psi^2_t)^{-1}(y))|,
  \end{equation}
which, by a similar treatment for the second norm on the right hand side of \eqref{prop-LPS.2}, leads to
  $$\big\|(\psi^1_t)^{-1}- (\psi^2_t)^{-1}\big\|_{L^p} \leq 2C\big\|\phi^1_t -\phi^2_t\big\|_{L^p}.$$
Substituting this estimate into  \eqref{prop-LPS.2} and applying Lemma \ref{lem-LPS-0}, we obtain the first estimate.

Next, since $\tilde \sigma^i_t(x)={\rm Id} + (\nabla\phi^i_t)((\psi^i_t)^{-1}(x))$, for $i=1,2$, we have
  \begin{equation}\label{prop-LPS.2.5}
  \aligned
  \|\tilde \sigma^1_t-\tilde\sigma^2_t\|_{L^p} &\leq \|(\nabla\phi^1_t)\circ (\psi^1_t)^{-1} -(\nabla\phi^1_t)\circ (\psi^2_t)^{-1}\|_{L^p}\\
  &\hskip13pt + \|(\nabla\phi^1_t)\circ (\psi^2_t)^{-1} -(\nabla\phi^2_t)\circ (\psi^2_t)^{-1}\|_{L^p}\\
  &=: I_1+I_2.
  \endaligned
  \end{equation}
We begin with the simpler term $I_2$. By the change of variable formula again, we have for all $t\in[0,T]$ that
  \begin{equation}\label{prop-LPS.3}
  \aligned
  I_2&= \bigg(\int_{\R^d} |\nabla\phi^1_t(x) -\nabla\phi^2_t(x)|^p |\det(\nabla \psi^2_t(x))|\,\d x\bigg)^{1/p}\\
  &\leq C \|\nabla\phi^1_t -\nabla\phi^2_t\|_{L^p} \leq C'\|b^1-b^2\|_{L^q(L^p)},
  \endaligned
  \end{equation}
where the last step follows from Lemma \ref{lem-LPS-0}.

We now deal with the term $I_1$, for which we need the pointwise characterization \eqref{2-lem-maximal-funct.1} of Sobolev functions. Therefore,
  $$I_1^p\leq C_d^p \int_{\R^d} |(\psi^1_t)^{-1}(x)-(\psi^2_t)^{-1}(x)|^p \big[M|\nabla^2\phi^1_t|((\psi^1_t)^{-1}(x)) + M|\nabla^2 \phi^1_t|((\psi^2_t)^{-1}(x))\big]^p\,\d x.$$
Similar to \eqref{prop-LPS.0.5}, we have
  $$|(\psi^1_t)^{-1}(x)-(\psi^2_t)^{-1}(x)|\leq  2|\phi^1_t((\psi^1_t)^{-1}(x)) -\phi^2_t((\psi^1_t)^{-1}(x))|.$$
Hence
  $$\aligned
  I_1^p&\leq C_{d,p} \int_{\R^d} |\phi^1_t((\psi^1_t)^{-1}(x)) -\phi^2_t((\psi^1_t)^{-1}(x))|^p \big[M|\nabla^2\phi^1_t|((\psi^1_t)^{-1}(x))\big]^p\,\d x\\
  &\hskip13pt +C_{d,p} \int_{\R^d} |\phi^1_t((\psi^2_t)^{-1}(x)) -\phi^2_t((\psi^2_t)^{-1}(x))|^p \big[M|\nabla^2\phi^1_t|((\psi^2_t)^{-1}(x))\big]^p\,\d x\\
  &\leq C_{d,p} \int_{\R^d} |\phi^1_t(y) -\phi^2_t(y)|^p \big[M|\nabla^2\phi^1_t|(y)\big]^p \big(|\det(\nabla\psi^1_t(y))| +|\det(\nabla\psi^2_t(y))|\big)\,\d y.
  \endaligned$$
Since $\psi^1_t$ and $\psi^1_t$ have bounded first derivatives, uniformly in $t\in[0,T]$, we arrive at
  \begin{equation}\label{prop-LPS.4}
  I_1^p \leq C' \int_{\R^d} |\phi^1_t(y) -\phi^2_t(y)|^p \big[M|\nabla^2\phi^1_t|(y)\big]^p \,\d y.
  \end{equation}
Applying \cite[Lemma 10.2(i)]{Krylov} with $\delta=1/2$ and noting that $\phi^1_T(y) -\phi^2_T(y)=0$, we obtain for all $y\in \R^d$ and $t\in [0,T]$ that
  $$|\phi^1_t(y) -\phi^2_t(y)|\leq C(T-t)^{\frac12} \|\phi^1 -\phi^2\|_{L^q(W^{2,p})}^{\frac12- \frac1q} \|\partial_t (\phi^1 -\phi^2)\|_{L^q(L^p)}^{\frac12+ \frac1q} \leq C_{T} \|b^1 -b^2\|_{L^q(L^p)},$$
where the last inequality is due to \eqref{regularity-1}. Substituting this estimate into \eqref{prop-LPS.4} yields
  $$I_1\leq C \|b^1 -b^2\|_{L^q(L^p)} \big\| M|\nabla^2\phi^1_t|\big\|_{L^p} \leq C'_p \|b^1 -b^2\|_{L^q(L^p)} \|\nabla^2\phi^1_t\|_{L^p}.$$
Note that $\|\nabla^2\phi^1\|_{L^q(L^p)} <\infty$. Combining the above inequality with \eqref{prop-LPS.2.5} and \eqref{prop-LPS.3}, we obtain the desired result.
\end{proof}

Let $\nu^i_t$ be the solution to \eqref{FPE-LPS-1} with $\tilde\sigma= \tilde\sigma^i$ and $\tilde b= \tilde b^i$. Under the assumptions of Theorem \ref{thm-LPS}, we deduce from Lemma \ref{lem-LPS} that $\d\nu^i_t= v^i_t\,\d x$ with $v^i\in L^\infty(0,T; L^1\cap L^{\bar p}(\R^d)),\, i=1,2$. Recall that $\bar p=\frac{p}{p-2} >\frac{p}{p-1}= p'$. We need the following estimates.

\begin{proposition}\label{prop-LPS-1}
There exists a constant $C>0$ such that, for all $t\in [0,T]$,
  $$\aligned \tilde\D_\delta(\mu^1_t, \mu^2_t)&\leq \tilde\D_{\delta/3}(\nu^1_t, \nu^2_t)+ \frac C\delta \|b^1-b^2\|_{L^q(L^p)} \|u^2\|_{L^\infty(L^{p'})},\\
  \tilde\D_\delta(\nu^1_t, \nu^2_t)&\leq \tilde\D_{\delta/3}(\mu^1_t, \mu^2_t)+ \frac C\delta \|b^1-b^2\|_{L^q(L^p)} \|u^2\|_{L^\infty(L^{p'})}.
  \endaligned$$
\end{proposition}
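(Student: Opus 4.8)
The plan is to transport an optimal coupling between $\nu^1_t$ and $\nu^2_t$ back to a coupling between $\mu^1_t$ and $\mu^2_t$, and conversely, through the near-identity diffeomorphisms $\psi^i_t$, using the uniform bound $\|\nabla\phi^i_t\|_{L^\infty}\le\frac12$ from \eqref{LPS-0} and the closeness estimate of Lemma \ref{lem-LPS-0} to control the distortion. For the first inequality I would choose $\pi\in\mathcal C(\nu^1_t,\nu^2_t)$ realizing $\tilde\D_{\delta/3}(\nu^1_t,\nu^2_t)$ and push it forward under $\big((\psi^1_t)^{-1},(\psi^2_t)^{-1}\big)$; by Lemma \ref{lem-LPS} this is an element of $\mathcal C(\mu^1_t,\mu^2_t)$. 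Writing $w_i=(\psi^i_t)^{-1}(y_i)$ and using the identity \eqref{prop-LPS.0}, namely $(\psi^i_t)^{-1}(y)=y-\phi^i_t\big((\psi^i_t)^{-1}(y)\big)$, together with $\|\nabla\phi^1_t\|_{L^\infty}\le\frac12$, one gets $|w_1-w_2|\le|y_1-y_2|+\frac12|w_1-w_2|+\big|(\phi^1_t-\phi^2_t)(w_2)\big|$, hence the pointwise bound
\[
|w_1-w_2|\le 2|y_1-y_2|+2\big|(\phi^1_t-\phi^2_t)\big((\psi^2_t)^{-1}(y_2)\big)\big|.
\]

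The crucial point is then to split the logarithm so that the $(\phi^1_t-\phi^2_t)$-contribution enters only at order $\delta^{-1}$ rather than $\delta^{-2}$. Put $r=|y_1-y_2|$ and $h=\big|(\phi^1_t-\phi^2_t)((\psi^2_t)^{-1}(y_2))\big|$: if $h\le r/2$ then $|w_1-w_2|\le 3r$, whereas if $h>r/2$ then $|w_1-w_2|\le 6h$ and, by the elementary inequality $\log(1+s^2)\le 2s$ valid for $s\ge 0$, $\log\big(1+\frac{|w_1-w_2|^2}{\delta^2}\big)\le\frac{12h}{\delta}$. Since $\log\big(1+\frac{(3r)^2}{\delta^2}\big)\ge 0$, in either case
\[
\log\Big(1+\frac{|w_1-w_2|^2}{\delta^2}\Big)\le \log\Big(1+\frac{|y_1-y_2|^2}{(\delta/3)^2}\Big)+\frac{12}{\delta}\big|(\phi^1_t-\phi^2_t)\big((\psi^2_t)^{-1}(y_2)\big)\big|.
\]
Integrating against $\pi$, the first term contributes exactly $\tilde\D_{\delta/3}(\nu^1_t,\nu^2_t)$; for the second, the second marginal of $\pi$ is $\nu^2_t=(\psi^2_t)_\#\mu^2_t$, so the change of variables formula gives $\int\big|(\phi^1_t-\phi^2_t)((\psi^2_t)^{-1}(y))\big|\,\d\nu^2_t(y)=\int|\phi^1_t-\phi^2_t|\,u^2_t\,\d x$. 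Since $\bar p=\frac{p}{p-2}\ge p'$, interpolation of $L^1\cap L^{\bar p}$ gives $u^2\in L^\infty(0,T;L^{p'}(\R^d))$, so by H\"older's inequality this is at most $\|\phi^1_t-\phi^2_t\|_{L^p}\|u^2_t\|_{L^{p'}}$, and Lemma \ref{lem-LPS-0} yields $\|\phi^1_t-\phi^2_t\|_{L^p}\le\|\phi^1_t-\phi^2_t\|_{W^{1,p}}\le C\|b^1-b^2\|_{L^q(L^p)}$ uniformly in $t$. Combining these proves the first inequality.

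The second inequality follows by the symmetric argument: take $\pi\in\mathcal C(\mu^1_t,\mu^2_t)$ realizing $\tilde\D_{\delta/3}(\mu^1_t,\mu^2_t)$, push it forward under $(\psi^1_t,\psi^2_t)$, which lies in $\mathcal C(\nu^1_t,\nu^2_t)$ by Lemma \ref{lem-LPS}, use $\|\nabla\psi^i_t\|_{L^\infty}\le\frac32$ to get $|\psi^1_t(x_1)-\psi^2_t(x_2)|\le\frac32|x_1-x_2|+\big|(\phi^1_t-\phi^2_t)(x_2)\big|$, and repeat the same logarithmic splitting (here the factor $\frac32<3$ only makes matters easier) followed by H\"older's inequality and Lemma \ref{lem-LPS-0}; no change of variables is needed since the second marginal of $\pi$ is already $\mu^2_t$. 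I expect the only genuinely delicate point to be the logarithmic split: a naive use of $|a+b|^2\le 2a^2+2b^2$ would leave the $(\phi^1_t-\phi^2_t)$-error at order $\delta^{-2}$, which is too weak for the later application, and it is the case distinction together with the bound $\log(1+s^2)\le 2s$ that both keeps the error at order $\delta^{-1}$ and forces the dilation factor $3$ in $\tilde\D_{\delta/3}$.
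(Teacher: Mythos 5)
Your proof is correct and follows the same overall route as the paper: push a coupling of $\nu^1_t,\nu^2_t$ through $\big((\psi^1_t)^{-1},(\psi^2_t)^{-1}\big)$ (resp.\ $(\psi^1_t,\psi^2_t)$ for the second inequality), control the distortion via the identity \eqref{prop-LPS.0} and the bound \eqref{LPS-0}, transfer the error term to $\mu^2_t$ by change of variables, and conclude with H\"older's inequality and Lemma \ref{lem-LPS-0}; your pointwise bound $|w_1-w_2|\le 2|y_1-y_2|+2|(\phi^1_t-\phi^2_t)((\psi^2_t)^{-1}(y_2))|$ is exactly the combination of \eqref{prop-LPS.1} and \eqref{prop-LPS.0.5} used there. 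The only genuine divergence is the logarithmic splitting: you argue by the case distinction $h\le r/2$ versus $h>r/2$ together with $\log(1+s^2)\le 2s$, whereas the paper writes $\log\big(1+\tfrac{|u+v|^2}{\delta^2}\big)\le\log\big(1+\tfrac{2|u|^2}{\delta^2}\big)+\log\big(1+\tfrac{2|v|^2}{\delta^2}\big)$ with $u=(\psi^1_t)^{-1}(x)-(\psi^1_t)^{-1}(y)$ and $v=(\psi^1_t)^{-1}(y)-(\psi^2_t)^{-1}(y)$, bounds the first term by $\log\big(1+\tfrac{8|x-y|^2}{\delta^2}\big)\le\log\big(1+\tfrac{|x-y|^2}{(\delta/3)^2}\big)$ via $|(\psi^1_t)^{-1}(x)-(\psi^1_t)^{-1}(y)|\le 2|x-y|$, and linearizes the second logarithm by $\log(1+r^2)\le 2r$ to get the $\tfrac{6}{\delta}|\phi^1_t-\phi^2_t|$ term. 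In particular your closing caveat is not quite accurate: the ``naive'' quadratic split does not force a $\delta^{-2}$ error, because the error appears inside its own logarithm and is then linearized, yielding order $\delta^{-1}$ exactly as in your case distinction; both routes produce the same dilation factor $3$ and comparable constants, so your variant is a perfectly serviceable alternative to the paper's argument.
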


\begin{proof}
First, note that if $\pi\in \mathcal C(\nu^1_t, \nu^2_t)$, then $((\psi^1_t)^{-1}, (\psi^2_t)^{-1})_\#\pi \in \mathcal C(\mu^1_t, \mu^2_t)$. Indeed, for any $A\in \mathcal B(\R^d)$,
  $$\aligned \big[((\psi^1_t)^{-1}, (\psi^2_t)^{-1})_\#\pi\big](A\times\R^d)&= \int_{\R^d\times \R^d} \textbf{1}_{A\times\R^d}\big((\psi^1_t)^{-1}(x), (\psi^2_t)^{-1}(y)\big)\,\d\pi(x,y)\\
  &= \int_{\R^d} \textbf{1}_A ((\psi^1_t)^{-1}(x))\,\d \nu^1_t(x)\\
  & = \big[((\psi^1_t)^{-1})_\# \nu^1_t\big](A) = \mu^1_t(A),\endaligned$$
where the last equality follows from Lemma \ref{lem-LPS}. Similarly,
$$\big[((\psi^1_t)^{-1}, (\psi^2_t)^{-1})_\#\pi\big](\R^d\times A)=\mu^2_t(A).$$
Hence we get the desired result.

Now we have
  \begin{equation*}
  \aligned \tilde\D_\delta(\mu^1_t, \mu^2_t)&\leq \int_{\R^d\times\R^d} \log\bigg(1+ \frac{|x-y|^2}{\delta^2}\bigg)\d\big[((\psi^1_t)^{-1}, (\psi^2_t)^{-1})_\#\pi\big](x,y)\\
  &= \int_{\R^d\times\R^d} \log\bigg(1+ \frac{|(\psi^1_t)^{-1}(x)- (\psi^2_t)^{-1}(y)|^2}{\delta^2}\bigg)\d\pi(x,y).
  \endaligned
  \end{equation*}
As a result,
  \begin{equation}\label{prop-LPS-1.1}
  \aligned \tilde\D_\delta(\mu^1_t, \mu^2_t)&\leq \int_{\R^d\times\R^d} \log\bigg(1+ \frac{2|(\psi^1_t)^{-1}(x)- (\psi^1_t)^{-1}(y)|^2}{\delta^2}\bigg)\d\pi(x,y)\\
  &\hskip13pt +\int_{\R^d\times\R^d} \log\bigg(1+ \frac{2|(\psi^1_t)^{-1}(y)- (\psi^2_t)^{-1}(y)|^2}{\delta^2}\bigg)\d\pi(x,y)\\
  &=: J_1+J_2.\endaligned
  \end{equation}

First, by \eqref{prop-LPS.0} and \eqref{LPS-0},
  $$\aligned
  |(\psi^1_t)^{-1}(x)-(\psi^1_t)^{-1}(y)|&\leq |x-y|+ |\phi^1_t((\psi^1_t)^{-1}(x)) - \phi^1_t((\psi^1_t)^{-1}(y))|\\
  &\leq |x-y|+\frac12 |(\psi^1_t)^{-1}(x)-(\psi^1_t)^{-1}(y)|,\endaligned$$
which implies
  \begin{equation}\label{prop-LPS.1}
  |(\psi^1_t)^{-1}(x)-(\psi^1_t)^{-1}(y)|\leq 2|x-y|,\quad x,y\in \R^d.
  \end{equation}
Therefore,
  \begin{equation}\label{prop-LPS-1.2}
  J_1\leq \int_{\R^d\times\R^d} \log\bigg(1+ \frac{8|x-y|^2}{\delta^2}\bigg)\d\pi(x,y).
  \end{equation}
Next, by \eqref{prop-LPS.0.5}, $$\aligned
  J_2&\leq \int_{\R^d\times\R^d} \log\bigg(1+ \frac{8|\phi^1_t((\psi^2_t)^{-1}(y))-\phi^2_t((\psi^2_t)^{-1}(y))|^2}{\delta^2}\bigg)\d\pi(x,y)\\
  &= \int_{\R^d} \log\bigg(1+ \frac{8|\phi^1_t((\psi^2_t)^{-1}(y))-\phi^2_t((\psi^2_t)^{-1}(y))|^2}{\delta^2}\bigg)\d\nu^2_t(y)\\
  &= \int_{\R^d} \log\bigg(1+ \frac{8|\phi^1_t(y)-\phi^2_t(y)|^2}{\delta^2}\bigg)\d\mu^2_t(y).
  \endaligned$$
Using the simple inequality $\log(1+r^2)\leq 2\log(1+r)\leq 2r$, for any $r\geq 0$, we obtain
\begin{eqnarray*}J_2&\leq& \int_{\R^d} \frac6\delta |\phi^1_t(y)-\phi^2_t(y)|\,\d\mu^2_t(y) \leq \frac6\delta \|\phi^1_t-\phi^2_t\|_{L^p} \|u^2_t\|_{L^{p'}}\\
&\leq& \frac C\delta \|b^1-b^2\|_{L^q(L^p)} \|u^2\|_{L^\infty(L^{p'})},
\end{eqnarray*}
where the last inequality follows from Lemma \ref{lem-LPS-0}. Combining this inequality with \eqref{prop-LPS-1.1} and \eqref{prop-LPS-1.2}, we arrive at
  $$\tilde\D_\delta(\mu^1_t, \mu^2_t)\leq \int_{\R^d\times\R^d} \log\bigg(1+ \frac{9|x-y|^2}{\delta^2}\bigg)\d\pi(x,y) + \frac C\delta \|b^1-b^2\|_{L^q(L^p)} \|u^2\|_{L^\infty(L^{p'})}.$$
Taking infimum with respect to $\pi\in \mathcal C(\nu^1_t, \nu^2_t)$ yields the desired result.

The second inequality can be proved in a similar way, and it is simpler by noting that if $\hat\pi \in \mathcal C(\mu^1_t, \mu^2_t)$, then $(\psi^1_t, \psi^2_t)_\# \hat\pi \in \mathcal C(\nu^1_t, \nu^2_t)$.
\end{proof}

Now we are ready to present the

\begin{proof}[Proof of Theorem \ref{thm-LPS}]
Note that
  $$\nabla\tilde\sigma^i_t(x)= (\nabla^2\phi^i_t)((\psi^i_t)^{-1}(x))\, \nabla (\psi^i_t)^{-1}(x),\quad \nabla\tilde b^i_t(x)= \lambda (\nabla \phi^i_t)((\psi^i_t)^{-1}(x))\, \nabla (\psi^i_t)^{-1}(x).$$
Since $\nabla (\psi^i_t)^{-1}(x)$ and $\nabla \psi^i_t(x)$ are bounded uniformly in $(t,x)\in [0,T]\times \R^d$, it is clear that
  \begin{equation}\label{thm-LPS.1}
  \big\| \nabla \tilde \sigma^i\big\|_{L^2(L^p)}^2\leq C \|b^i\|_{L^q(L^p)}^2 \quad \mbox{and} \quad \big\| \nabla \tilde b^i\big\|_{L^1(L^p)}\leq C \|b^i\|_{L^q(L^p)}.
  \end{equation}
Moreover, it is easy to see that $\tilde b^i\in L^1(0,T; L^p(\R^d,\R^d))$. Therefore, the conditions of Theorem \ref{thm-variant} are verified with $p_1= p/2$ and $p_2=p$, except that $\tilde \sigma^i\in L^2(0,T; L^p(\R^d,\mathcal M_{d,d}))$. Indeed, $\tilde \sigma^i$ is uniformly bounded and non-degenerate. But this will not cause trouble, since by Remark \ref{2-rem}(3), it suffices to have $\int_0^T\! \int_{\R^d} \|\tilde\sigma^i_t\|^2 v^i_t\,\d x\d t<+\infty$, which is obvious due to $v^i\in L^\infty(0,T; L^1(\R^d))$. Thus, we have by Theorem \ref{thm-variant} that
  \begin{equation*}
  \begin{split}
  \tilde\D_\delta(\nu^1_t, \nu^2_t)&\leq \tilde\D_\delta(\nu^1_0, \nu^2_0) +C_1 \bigg(\frac1{\delta^2}\|\tilde\sigma^1- \tilde\sigma^2\|_{L^2(L^p)}^2 + \big\| \nabla \tilde \sigma^1\big\|_{L^2(L^p)}^2 \bigg)\\
  &\hskip13pt + C_2 \bigg(\frac1{\delta}\|\tilde b^1- \tilde b^2\|_{L^1(L^p)} + \big\| \nabla \tilde b^1\big\|_{L^1(L^p)} \bigg).
  \end{split}
  \end{equation*}
Combining the estimates in \eqref{thm-LPS.1} with Proposition \ref{prop-LPS}, we obtain
  $$\aligned
  \tilde\D_\delta(\nu^1_t, \nu^2_t)&\leq \tilde\D_\delta(\nu^1_0, \nu^2_0) +C'_1 \bigg(\frac1{\delta^2}\|b^1- b^2\|_{L^q(L^p)}^2 + \| b^1\|_{L^q(L^p)}^2 \bigg)\\
  &\hskip13pt + C'_2 \bigg(\frac1{\delta}\|b^1- b^2\|_{L^q(L^p)} + \| b^1\|_{L^q(L^p)} \bigg). \endaligned$$
Finally, we complete the proof by applying Proposition \ref{prop-LPS-1}.
\end{proof}

It remains to prove Theorem \ref{thm-LPS-Wass}. To this end, we first prove the following assertion.

\begin{proposition}\label{prop-LPS-Wass}
Let $\alpha\in (2, p)$. For each $i=1,2$, let $\nu^i_t= v^i_t(x)\,\d x$ be the solution to \eqref{FPE-LPS-1} satisfying $v^i\in L^\infty \big(0,T; L^1\cap L^{p/(p-\alpha)}(\R^d)\big)$. Assume that $\nu^i_0$ has finite moment of order $2\alpha-2,\, i=1,2$. Then there exists  some positive constant $\tilde C_\alpha$ such that for all $t\in[0,T]$,
  $$W_2(\nu^1_t, \nu^2_t)\leq \tilde C_\alpha\Big[W_\alpha(\nu^1_0, \nu^2_0) + \|v^2\|_{L^\infty(L^{p/(p-\alpha)})}^{1/\alpha} \big(\|\tilde b^1- \tilde b^2\|_{L^\alpha (L^p)} + \|\tilde \sigma^1- \tilde \sigma^2\|_{L^\alpha (L^p)}\big) \Big].$$
\end{proposition}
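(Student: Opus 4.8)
The plan is to run the coupling argument from the proof of Theorem~\ref{2-thm-1} on the regularized equation \eqref{FPE-LPS-1}, but tracking the (regularized) squared distance instead of a logarithmic cost, and exploiting the extra regularity of $\tilde\sigma^i,\tilde b^i$ furnished by Zvonkin's transform. Since $\tilde a^i=\tilde\sigma^i(\tilde\sigma^i)^\ast$ and $\tilde b^i$ are bounded, each $\nu^i$ satisfies \eqref{integrability} for \eqref{FPE-LPS-1}, so Propositions~\ref{prop-Trevisan}, \ref{2-prop-1} and \ref{2-prop-2} produce a single filtered probability space carrying a Brownian motion $W$ and two weak solutions $Y^1,Y^2$ of \eqref{Ito-SDE-2} with coefficients $\tilde\sigma^i,\tilde b^i$, with $\mathrm{law}(Y^i_t)=\nu^i_t$ and $(Y^1_0,Y^2_0)$ distributed according to an optimal coupling of $(\nu^1_0,\nu^2_0)$ for $W_\alpha$. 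Set $Z_t=Y^1_t-Y^2_t$, $\delta b=\tilde b^1-\tilde b^2$, $\delta\sigma=\tilde\sigma^1-\tilde\sigma^2$.

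Two structural facts, read off from \eqref{Zvonkin.1}--\eqref{Ito-SDE-2.1}, \eqref{regularity} and \eqref{LPS-0}, will be used repeatedly. First, since $\|\nabla\phi^i_t\|_{L^\infty}\le\frac12$ and $(\psi^i_t)^{-1}$ is Lipschitz uniformly in $t$, the drift $\tilde b^i$ is Lipschitz uniformly in $t$, so $\langle Z_s,\tilde b^1_s(Y^1_s)-\tilde b^1_s(Y^2_s)\rangle\le L|Z_s|^2$. Second, $\tilde\sigma^i$ is bounded and uniformly elliptic, and $\nabla\tilde\sigma^i_t=(\nabla^2\phi^i_t)\circ(\psi^i_t)^{-1}\,\nabla(\psi^i_t)^{-1}\in L^p$ for a.e.\ $t$, so the pointwise bound \eqref{2-lem-maximal-funct.1} and a change of variables under the bi-Lipschitz map $(\psi^1_t)^{-1}$ give $\|\tilde\sigma^1_s(Y^1_s)-\tilde\sigma^1_s(Y^2_s)\|^2\le C|Z_s|^2\mathcal{M}_s^2$, where $\mathcal{M}_s:=(M|\nabla^2\phi^1_s|)\circ(\psi^1_s)^{-1}(Y^1_s)+(M|\nabla^2\phi^1_s|)\circ(\psi^1_s)^{-1}(Y^2_s)$.

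Next I would fix $\varepsilon>0$ and apply It\^o's formula to $(|Z_t|^2+\varepsilon)^{\alpha/2}e^{-\Lambda_t}$, with the random integrating factor $\Lambda_t=\int_0^t(c_0+c_1\mathcal{M}_s^2)\,ds$. Decomposing each drift/diffusion increment into a ``self'' part (handled by the two facts above and absorbed into the $\Lambda_t$-weight after Young inequalities such as $(|Z_s|^2+\varepsilon)^{(\alpha-1)/2}|\delta b_s(Y^2_s)|\le C_\alpha(|Z_s|^2+\varepsilon)^{\alpha/2}+C_\alpha|\delta b_s(Y^2_s)|^\alpha$) and a ``forcing'' part in $\delta b_s(Y^2_s),\delta\sigma_s(Y^2_s)$, and using that the assumed moment of order $2\alpha-2$ of $\nu^i_0$ propagates along \eqref{Ito-SDE-2} (standard, since the coefficients are bounded) to make the stochastic integrals genuine martingales, one obtains after letting $\varepsilon\downarrow0$
\[
\E\big[|Z_t|^\alpha e^{-\Lambda_t}\big]\le\E|Z_0|^\alpha+C_\alpha\,\E\!\int_0^t\!\big(|\delta b_s(Y^2_s)|^\alpha+\|\delta\sigma_s(Y^2_s)\|^\alpha\big)\,ds .
\]
Since $\mathrm{law}(Y^2_s)=v^2_s\,dx$, H\"older's inequality bounds $\E\int_0^t|\delta b_s(Y^2_s)|^\alpha\,ds\le\|v^2\|_{L^\infty(L^{p/(p-\alpha)})}\|\delta b\|_{L^\alpha(L^p)}^\alpha$ (here $\alpha<p$ is used), and likewise for $\delta\sigma$; note that $\alpha<q$ together with Proposition~\ref{prop-LPS} guarantees $\delta b,\delta\sigma\in L^\alpha(0,T;L^p)$.

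Finally I would recover $W_2$ from this $\alpha$-th moment estimate by one more H\"older inequality, $\E|Z_t|^2=\E\big[(|Z_t|^\alpha e^{-\Lambda_t})^{2/\alpha}e^{(2/\alpha)\Lambda_t}\big]\le(\E[|Z_t|^\alpha e^{-\Lambda_t}])^{2/\alpha}(\E e^{\frac{2}{\alpha-2}\Lambda_t})^{(\alpha-2)/\alpha}$, then combine with the previous display, take square roots, use subadditivity of $x\mapsto x^{1/\alpha}$ on the right-hand side, and recall $W_2(\nu^1_t,\nu^2_t)^2\le\E|Z_t|^2$ and $\|Z_0\|_{L^\alpha}=W_\alpha(\nu^1_0,\nu^2_0)$. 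The step I expect to be the main obstacle is the exponential integrability $\E e^{\lambda\Lambda_T}<\infty$ for every $\lambda>0$: by $\mathcal{M}_s^2\lesssim F_s(Y^1_s)+F_s(Y^2_s)$ with $F_s:=(M|\nabla^2\phi^1_s|)^2\circ(\psi^1_s)^{-1}$ and Cauchy--Schwarz this reduces to $\E\exp\big(\lambda\int_0^T F_s(Y^i_s)\,ds\big)<\infty$, which follows from Khasminskii's lemma together with Krylov's estimate for the non-degenerate diffusions $Y^i$ (bounded, uniformly elliptic), provided $F\in L^{q/2}(0,T;L^{p/2})$ with $\frac{d}{p/2}+\frac{2}{q/2}<2$; after the change of variables under $(\psi^1_s)^{-1}$ and the maximal inequality \eqref{2-lem-maximal-funct.2} (valid since $p>1$) this is precisely the Ladyzhenskaya--Prodi--Serrin condition \eqref{LPS} combined with $\nabla^2\phi^1\in L^q(0,T;L^p)$.
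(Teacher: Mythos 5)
Your proposal is correct and follows the same overall scheme as the paper's proof: couple the two transformed SDEs \eqref{Ito-SDE-2} on one probability space with $(Y^1_0,Y^2_0)$ optimally coupled for $W_\alpha$, apply It\^o's formula to the $\alpha$-th power of $Z_t=Y^1_t-Y^2_t$ with a random exponential weight absorbing the ``self'' terms, use Young's inequality on the forcing terms, pass from the weighted $\alpha$-moment to $\E|Z_t|^2$ by H\"older together with exponential integrability of the weight, and finish with H\"older in space against $v^2_s$ with exponents $p/\alpha$ and $p/(p-\alpha)$ (your $\varepsilon$-regularization of $|z|^\alpha$ is harmless but unnecessary since $\alpha>2$, and your direct Lipschitz bound for $\tilde b^1$ from \eqref{LPS-0} replaces the paper's citation of \cite[Proposition 4.3]{Flandoli}). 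The one genuinely different ingredient is the treatment of the singular diffusion-difference term: the paper simply sets $\d A_t=\|\tilde\sigma^1_t(Y^1_t)-\tilde\sigma^1_t(Y^2_t)\|^2|Z_t|^{-2}\d t$ and quotes $\E e^{kA_t}<\infty$ for all $k$ from \cite[Lemma 7]{Flandoli11} (see also \cite[Lemma 4.5]{Flandoli}), whereas you bound the ratio pointwise by maximal functions of $\nabla^2\phi^1$ via \eqref{2-lem-maximal-funct.1} and re-derive the exponential moment through Krylov's estimate plus Khasminskii's lemma under \eqref{LPS}, \eqref{regularity} and \eqref{2-lem-maximal-funct.2}; this is essentially a self-contained proof of the cited lemma, at the price of two small technical points you should make explicit: the comparison $M|\nabla\tilde\sigma^1_s|\lesssim (M|\nabla^2\phi^1_s|)\circ(\psi^1_s)^{-1}$ requires the (easy) compatibility of maximal functions with bi-Lipschitz changes of variables, and the mixed-norm Krylov estimate is cleanest if you transfer the functional back to $X^i_s=(\psi^i_s)^{-1}(Y^i_s)$, which has identity diffusion and LPS drift so that the Krylov--R\"ockner estimates \cite{Krylov} apply directly, rather than invoking a mixed-norm Krylov estimate for $Y^i$ whose diffusion is only H\"older continuous. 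With these points spelled out, your argument is a valid, slightly more self-contained alternative; the paper's route is shorter because it outsources exactly this step to the literature.
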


\begin{proof}
Since $\alpha>2$, we have $2\alpha-2>\alpha$, thus the $\alpha$-th moments of $\nu^1_0$ and  $\nu^2_0$ are finite. Take $\pi_\alpha\in \mathcal C(\nu^1_0, \nu^2_0)$ such that
  $$W_\alpha(\nu^1_0, \nu^2_0)^\alpha =\int_{\R^d\times \R^d} |x-y|^\alpha\,\d \pi_\alpha(x,y).$$
Analogous to the beginning of the proof of Theorem \ref{2-thm-1}, we can find a probability space $(\Omega, \mathcal G, P)$ on which there are defined two stochastic processes $Y^1_t$ and $Y^2_t$ and a Brownian motion $W_t$, such that $\pi_\alpha={\rm law}(Y^1_0,Y^2_0)$; moreover, for $i=1,2$, $\nu^i_t= {\rm law}(Y^i_t)$ and
  $$Y^i_t=Y^i_0+\int_0^t \tilde b^i_s(Y^i_s)\,\d s  +\int_0^t \tilde\sigma^i_s(Y^i_s)\,\d W_s,\quad \mbox{for all } t\in[0, T].$$
Following the proof of Lemma \ref{2-lem-growth}(1) and using the fact that $\tilde b^i$ and $\tilde\sigma^i$ are uniformly bounded on $[0,T]\times\R^d$, we have
  $$\sup_{0\leq t\leq T} \int_{\R^d} |x|^{2\alpha-2}\,\d\nu^i_t(x) =\sup_{0\leq t\leq T} \E|Y^i_t|^{2\alpha -2} < \infty,\quad i=1,2.$$

Let $Z_t= Y^1_t- Y^2_t$. For $\alpha\in (2, p)$, by It\^o's formula,
  \begin{equation*} \aligned
  \d|Z_t|^\alpha &= \alpha|Z_t|^{\alpha-2}\<Z_t, (\tilde \sigma^1_t(Y^1_t)- \tilde \sigma^2_t(Y^2_t))\,\d W_t\> +\alpha|Z_t|^{\alpha-2}\<Z_t, \tilde b^1_t(Y^1_t)- \tilde b^2_t(Y^2_t) \>\,\d t\\
  &\hskip13pt + \frac \alpha2 |Z_t|^{\alpha-2} \|\tilde \sigma^1_t(Y^1_t)- \tilde \sigma^2_t(Y^2_t)\|^2\,\d t +\alpha(\alpha-2)|Z_t|^{\alpha-4} |(\tilde \sigma^1_t(Y^1_t)- \tilde \sigma^2_t(Y^2_t))^\ast Z_t|^2\,\d t\\
  &=: \d M_t + \d I_1(t) + \d I_2(t) +\d I_3(t).
  \endaligned
  \end{equation*}
Since
  $$\aligned \E \<M\>_T &= \alpha^2 \E\int_0^T  |Z_t|^{2\alpha-4} |(\tilde \sigma^1_t(Y^1_t)- \tilde \sigma^2_t(Y^2_t))^\ast Z_t|^2\,\d t\\
  &\leq \alpha^2\big(\|\tilde \sigma^1\|_{L^\infty} + \|\tilde \sigma^2\|_{L^\infty}\big)^2\, \E\int_0^T  |Z_t|^{2\alpha-2}\,\d t <\infty, \endaligned$$
which implies that $M_t$ is a square integrable martingale. We have
  \begin{equation}\label{prop-LPS-Wass.1}
  \aligned
  \d I_1(t)&= \alpha |Z_t|^{\alpha-2} \<Z_t, \tilde b^1_t(Y^1_t)- \tilde b^1_t(Y^2_t) \>\,\d t+ \alpha |Z_t|^{\alpha-2} \<Z_t, \tilde b^1_t(Y^2_t)- \tilde b^2_t(Y^2_t) \>\,\d t\\
  &\leq \alpha L |Z_t|^\alpha \,\d t + \alpha |Z_t|^{\alpha-1} |\tilde b^1_t(Y^2_t)- \tilde b^2_t(Y^2_t)|\,\d t,
  \endaligned
  \end{equation}
where $L$ is the Lipschitz constant of $\tilde{b}^1_t$ which is independent of $t\in [0,T]$ (see e.g. \cite[Proposition 4.3]{Flandoli}). Next,
  $$\aligned
  \d I_2(t) +\d I_3(t)&\leq  \Big(\frac \alpha2 + \alpha(\alpha-2)\Big)|Z_t|^{\alpha-2} \|\tilde \sigma^1_t(Y^1_t)- \tilde \sigma^2_t(Y^2_t)\|^2\,\d t\\
  & \leq 2\alpha^2|Z_t|^{\alpha-2} \big(\|\tilde \sigma^1_t(Y^1_t)- \tilde \sigma^1_t(Y^2_t)\|^2 + \|\tilde \sigma^1_t(Y^2_t)- \tilde \sigma^2_t(Y^2_t)\|^2\big)\,\d t.
  \endaligned$$
Denote by
  $$\d A_t = \frac{\|\tilde \sigma^1_t(Y^1_t)- \tilde \sigma^1_t(Y^2_t)\|^2}{|Y^1_t- Y^2_t|^2}\,\d t$$
with the convention that $\frac 00=0$. According to 
\cite[Lemma 7]{Flandoli11} (see also \cite[Lemma 4.5]{Flandoli}),
  \begin{equation}\label{prop-LPS-Wass.1.5}
  \E e^{kA_t} <\infty\quad \mbox{for any } k\in\R.
  \end{equation}
Using this notation, we have
  \begin{equation}\label{prop-LPS-Wass.2}
  \d I_2(t) +\d I_3(t)\leq 2\alpha^2 |Z_t|^{\alpha}\,\d A_t + 2\alpha^2 |Z_t|^{\alpha-2} \|\tilde \sigma^1_t(Y^2_t)- \tilde \sigma^2_t(Y^2_t)\|^2\,\d t.
  \end{equation}
Combining \eqref{prop-LPS-Wass.1} and \eqref{prop-LPS-Wass.2} yields
\begin{eqnarray*}
\d|Z_t|^\alpha&\leq& \d M_t + |Z_t|^{\alpha} \d\tilde A_t + \alpha |Z_t|^{\alpha-1} |\tilde b^1_t(Y^2_t)- \tilde b^2_t(Y^2_t)|\,\d t \\
&&+ 2\alpha^2 |Z_t|^{\alpha-2} \|\tilde \sigma^1_t(Y^2_t)- \tilde \sigma^2_t(Y^2_t)\|^2\,\d t,
\end{eqnarray*}
where $\tilde A_t= \alpha L t + 2\alpha^2 A_t$. By Young's inequality,
  $$|Z_t|^{\alpha-1} |\tilde b^1_t(Y^2_t)- \tilde b^2_t(Y^2_t)| \leq \frac{\alpha-1}\alpha |Z_t|^{\alpha} +\frac1\alpha |\tilde b^1_t(Y^2_t)- \tilde b^2_t(Y^2_t)|^\alpha$$
and
  $$|Z_t|^{\alpha-2} \|\tilde \sigma^1_t(Y^2_t)- \tilde \sigma^2_t(Y^2_t)\|^2 \leq \frac{\alpha-2}\alpha |Z_t|^{\alpha} +\frac2\alpha \|\tilde \sigma^1_t(Y^2_t)- \tilde \sigma^2_t(Y^2_t)\|^\alpha.$$
Therefore,
  $$\d|Z_t|^\alpha\leq \d M_t + |Z_t|^{\alpha} \d\hat A_t + |\tilde b^1_t(Y^2_t)- \tilde b^2_t(Y^2_t)|^\alpha\,\d t + 4\alpha \|\tilde \sigma^1_t(Y^2_t)- \tilde \sigma^2_t(Y^2_t)\|^\alpha\,\d t,$$
where
  $$\hat A_t= \tilde A_t + (2\alpha^2 -3\alpha -1)t= 2\alpha^2 A_t + C_{\alpha, L} t\geq0$$
with $C_{\alpha, L}=2\alpha^2+(L-3)\alpha-1$. As a result,
  $$\d\big(e^{-\hat A_t} |Z_t|^\alpha\big)\leq e^{-\hat A_t}\,\d M_t +e^{-\hat A_t}|\tilde b^1_t(Y^2_t)- \tilde b^2_t(Y^2_t)|^\alpha\,\d t +4\alpha e^{-\hat A_t} \|\tilde \sigma^1_t(Y^2_t)- \tilde \sigma^2_t(Y^2_t)\|^\alpha\,\d t.$$
Notice  
that $M_t$ is a square integrable martingale and $e^{-\hat A_t}\leq 1$; hence
  $$\E\big(e^{-\hat A_t} |Z_t|^\alpha\big) \leq \E|Z_0|^\alpha + \E\int_0^t e^{-\hat A_s}\big(|\tilde b^1_s(Y^2_s)- \tilde b^2_s(Y^2_s)|^\alpha + 4\alpha \|\tilde \sigma^1_s(Y^2_s)- \tilde \sigma^2_s(Y^2_s)\|^\alpha\big)\,\d s.$$

By H\"older's inequality and \eqref{prop-LPS-Wass.1.5},
  $$\aligned
  \E|Z_t|^2 &\leq \big[\E\big(e^{-\hat A_t} |Z_t|^\alpha\big)\big]^{2/\alpha} \big[\E e^{\alpha\hat A_t/(\alpha-2)} \big]^{(\alpha-2)/\alpha}\\
  &\leq C_\alpha \bigg[\E|Z_0|^\alpha + \E\int_0^t e^{-\hat A_s}\big(|\tilde b^1_s(Y^2_s)- \tilde b^2_s(Y^2_s)|^\alpha + 4\alpha \|\tilde \sigma^1_s(Y^2_s)- \tilde \sigma^2_s(Y^2_s)\|^\alpha\big)\,\d s\bigg]^{2/\alpha}.
  \endaligned$$
Consequently,
  $$\aligned \|Z_t\|_{L^2(P)}&\leq \tilde C_\alpha\bigg[\|Z_0\|_{L^\alpha(P)} + \Big(\int_0^t\!\!\int_{\R^d} \big(|\tilde b^1_s- \tilde b^2_s|^\alpha + \|\tilde \sigma^1_s- \tilde \sigma^2_s\|^\alpha\big) v^2_s\,\d x\d s\Big)^{1/\alpha}\bigg]\\
  &\leq \tilde C_\alpha\bigg[\|Z_0\|_{L^\alpha(P)} + \Big(\int_0^t \big(\|\tilde b^1_s- \tilde b^2_s\|_{L^p}^\alpha + \|\tilde \sigma^1_s- \tilde \sigma^2_s\|_{L^p}^\alpha\big) \|v^2_s\|_{L^{p/(p-\alpha)}}\,\d s\Big)^{1/\alpha}\bigg]\\
  &\leq \tilde C_\alpha\Big[\|Z_0\|_{L^\alpha(P)} + \|v^2\|_{L^\infty(L^{p/(p-\alpha)})}^{1/\alpha} \big(\|\tilde b^1- \tilde b^2\|_{L^\alpha (L^p)} + \|\tilde \sigma^1- \tilde \sigma^2\|_{L^\alpha (L^p)}\big) \Big].
  \endaligned$$
This finishes the proof.
\end{proof}

New we can present the

\begin{proof}[Proof of Theorem \ref{thm-LPS-Wass}]
Under the assumptions, it is clear that the conditions of Proposition \ref{prop-LPS-Wass} are verified. Following the argument of Proposition \ref{prop-LPS-1}, we can show that for $\alpha\in [2,p\wedge q)$,
  $$\aligned W_\alpha(\mu^1_t, \mu^2_t)&\leq 2 W_\alpha(\nu^1_t, \nu^2_t) + 2 \|b^1-b^2\|_{L^q(L^p)} \|u^2\|_{L^\infty(L^{p/(p-\alpha)})}^{1/\alpha},\\
  W_\alpha(\nu^1_t, \nu^2_t)&\leq 2 W_\alpha(\mu^1_t, \mu^2_t) + 2 \|b^1-b^2\|_{L^q(L^p)} \|u^2\|_{L^\infty(L^{p/(p-\alpha)})}^{1/\alpha}.  \endaligned$$
Combining these inequalities with Propositions \ref{prop-LPS} and \ref{prop-LPS-Wass}, we complete the proof.
\end{proof}

\medskip

\noindent \textbf{Acknowledgements.} The first author is supported by the National Natural Science Foundation of China (Nos. 11401403, 11571347). The second author is supported by the National Natural Science Foundation of China (Nos. 11571347, 11688101), the Seven Main Directions (Y129161ZZ1) and the Special Talent Program of the Academy of Mathematics and Systems Science, Chinese Academy of Sciences.


\begin{thebibliography}{a23}

\bibitem{Ambrosio04} L. Ambrosio, Transport equation and Cauchy problem for BV vector fields. \emph{Invent. Math.} \textbf{158} (2004), 227--260.

\bibitem{Ambrosio08} L. Ambrosio, \emph{Transport equation and Cauchy problem for non-Smooth Vector Fields.} Calculus of variations and nonlinear partial differential equations, 1--41, Lecture Notes in Math., 1927, Springer, Berlin, 2008.

\bibitem{Ambrosio09} L. Ambrosio, G. Savar\'e, L. Zambotti, Existence and stability for Fokker--Planck equations with log-concave reference measure. \emph{Probab. Theory Related Fields} \textbf{145} (3--4) (2009) 517--564.

\bibitem{BDRS} V.I. Bogachev, G. Da Prato, M. R\"ockner, S.V. Shaposhnikov, On the uniqueness of solutions to continuity equations. \emph{J. Differential Equations}  \textbf{259}  (2015),  no. 8, 3854--3873.

\bibitem{Bogachev15} V.I. Bogachev, N.V. Krylov, M. R\"ockner, S.V. Shaposhnikov, Fokker--Planck--Kolmogorov equations. Mathematical Surveys and Monographs, 207. \emph{American Mathematical Society, Providence, RI}, 2015.

\bibitem{BRS} V.I. Bogachev, M. R\"ockner, S.V. Shaposhnikov, Uniqueness problems for degenerate Fokker--Planck--Kolmogorov equations. \emph{J. Math. Sci. (N.Y.)}  \textbf{207}  (2015),  no. 2, 147--165.

\bibitem{Bogachev16} V.I. Bogachev, M. R\"ockner, S.V. Shaposhnikov, Distances between transition probabilities of diffusions and applications to nonlinear Fokker--Planck--Kolmogorov equations. \emph{J. Funct. Anal.} \textbf{271}  (2016),  no. 5, 1262--1300.

\bibitem{CrippaDeLellis} G. Crippa, C. de Lellis, Estimates and regularity results for the DiPerna--Lions flows. \emph{J. Reine Angew. Math.} \textbf{616} (2008), 15--46.

\bibitem{DiPernaLions} R.J. DiPerna, P.L. Lions, Ordinary differential equations, transport theory and Sobolev spaces. \emph{Invent. Math.} \textbf{98} (1989), 511--547.

\bibitem{FangLuoThalmaier} Shizan Fang, Dejun Luo, Anton Thalmaier, Stochastic differential equations with coefficients in Sobolev spaces. \emph{J. Funct. Anal.} \textbf{259} (2010), no. 5, 1129--1168.


\bibitem{Flandoli11} E. Fedrizzi, F. Flandoli, Pathwise uniqueness and continuous dependence of SDEs with non-regular drift. \emph{Stochastics} \textbf{83} (2011),  no. 3, 241--257.

\bibitem{Flandoli} E. Fedrizzi, F. Flandoli, H\"older flow and differentiability for SDEs with nonregular drift. \emph{Stoch. Anal. Appl.} \textbf{31} (2013), 708--736.

\bibitem{Figalli} A. Figalli, Existence and uniqueness of martingale solutions for SDEs with rough or degenerate coefficients. \emph{J. Funct. Anal.} \textbf{254} (2008), 109--153.

\bibitem{Fournier} N. Fournier, L. Xu, On the equivalence between some jumping SDEs with rough coefficients and some non-local PDEs, arXiv:1611.06341.

\bibitem{Ikeda} N. Ikeda, S. Watanabe, Stochastic differential equations and diffusion processes, 2nd ed., North-Holland, 1989.

\bibitem{Jabin} P.E. Jabin, Differential equations with singular fields. \emph{J. Math. Pures Appl. (9)} \textbf{94} (2010), 597--621.


\bibitem{Krylov} N.V. Krylov, M. R\"ockner, Strong solutions of stochastic equations with singular time dependent drift. \emph{Probab. Theory Relat. Fields} \textbf{131} (2005), no. 2, 154--196.

\bibitem{LebrisLions04} C. Le Bris, P.L. Lions, Renormalized solutions of some transport equations with partially $W^{1,1}$  velocities and applications. \emph{Ann. Mat. Pura Appl.} \textbf{183} (2004), 97--130.

\bibitem{LebrisLions08} C. Le Bris, P.L. Lions, Existence and uniqueness of solutions to Fokker--Planck type equations with irregular coefficients. \emph{Comm. Partial Differential Equations} \textbf{33} (2008), 1272--1317.

\bibitem{LiLuo} Huaiqian Li, Dejun Luo, A unified treatment for ODEs under Osgood and Sobolev type conditions. \emph{Bull. Sci. Math.} \textbf{139}  (2015),  no. 1, 114--133.

\bibitem{LiLuoWany} Huaiqian Li, Dejun Luo, Jian Wang, Harnack inequalities for SDEs with multiplicative noise and non-regular drift. \emph{Stoch. Dyn.} \textbf{15} (2015), no. 3, 1550015, 18 pp.

\bibitem{Luo10} Dejun Luo, Well-posedness of Fokker--Planck type equations on the Wiener space. \emph{Infin. Dimens. Anal. Quantum Probab. Relat. Top.} \textbf{13} (2010), 273--304.

\bibitem{Luo13} Dejun Luo, Fokker--Planck type equations with Sobolev diffusion coefficients and BV drift coefficients. \emph{Acta Math. Sin. (Engl. Ser.)} \textbf{29} (2013), no. 2, 303--314.

\bibitem{Luo14} Dejun Luo, Uniqueness of degenerate Fokker--Planck equations with weakly differentiable drift whose gradient is given by a singular integral. \emph{Electron. Commun. Probab.} \textbf{19} (2014), no. 43, 14 pp.

\bibitem{Manita} O. Manita, Estimates for Kantorovich functionals between solutions to Fokker--Planck--Kolmogorov equations with dissipative drifts. \emph{Rend. Lincei Mat. Appl.}, to appear. arXiv:1507.04014v2.

\bibitem{MeyerB}  P.A. Meyer, Probability and Potentials, \emph{Blaisdell Publishing Co., N. Y.}, 1966.

\bibitem{RocknerZhang10} M. R\"ockner, X. Zhang, Weak uniqueness of Fokker--Planck equations with degenerate and bounded coefficients. \emph{C. R. Math. Acad. Sci. Paris} \textbf{348} (2010), 435--438.

\bibitem{Seis16a} Christian Seis, A quantitative theory for the continuity equation. \emph{Ann. Inst. H. Poincar\'e Anal. Non Lin\'eaire} (2017), DOI: 10.1016/j.anihpc.2017.01.001.

\bibitem{Seis16b} Christian Seis, Optimal stability estimates for continuity equations, arXiv:1608.06115v1.

\bibitem{Trevisan} Dario Trevisan, Well-posedness of multidimensional diffusion processes with weakly differentiable coefficients. \emph{Electron. J. Probab.} \textbf{21} (2016), Paper No. 22, 41 pp.

\bibitem{Villani} C\'edric Villani, Topics in optimal transportation. Graduate Studies in Mathematics, 58. \emph{American Mathematical Society, Providence, RI}, 2003.

\bibitem{Villani2009}  C\'edric Villani, Optimal transport, Old and New. Grundlehren der Mathematischen
Wissenschaften 338. \emph{Springer-Verlag, Berlin}, 2009.

\bibitem{Zhang10} X. Zhang, Stochastic flows of SDEs with irregular coefficients and stochastic transport equations. \emph{Bull. Sci. Math.} \textbf{134} (2010), 340--378.

\bibitem{Zhang11} X. Zhang, Stochastic homeomorphism flows of SDEs with singular drifts and Sobolev diffusion coefficients. \emph{Electron. J. Probab.} \textbf{16} (2011), no. 38, 1096--1116.

\bibitem{Zhang13} X. Zhang, Well-posedness and large deviation for degenerate SDEs with Sobolev coefficients. \emph{Rev. Mat. Iberoam.} \textbf{29} (2013), 25--62.

\bibitem{Zhang16} X. Zhang, Stochastic differential equations with Sobolev diffusion and singular drift and applications. \emph{Ann. Appl. Probab.}  \textbf{26}  (2016),  no. 5, 2697--2732.

\bibitem{Zhang16-2} X. Zhang, Stochastic Hamiltonian flows with singular coefficients, arXiv:1606.04360v2.
\end{thebibliography}
\end{document}